\newtheorem{prop}{Proposition}
\newtheorem{lem}{Lemma}
\def\1{1\!{\rm l}}
\newcommand{\leqa}{\lesssim}
\newcommand{\geqa}{\gtrsim}
\newcommand{\EM}{\ensuremath}
\newcommand{\al}{\alpha}
\newcommand{\be}{\beta}
\newcommand{\ze}{\zeta}
\newcommand{\ga}{\gamma}
\newcommand{\la}{\lambda}
\newcommand{\te}{\theta}
\newcommand{\ta}{\tau}
\newcommand{\veps}{\varepsilon}
\newcommand{\cA}{\EM{\mathcal{A}}}
\newcommand{\cC}{\EM{\mathcal{C}}}
\newcommand{\cM}{\EM{\mathcal{M}}}
\newcommand{\cN}{\EM{\mathcal{N}}}
\DeclareMathAlphabet{\mathpzc}{OT1}{pzc}{m}{it}
\newcommand{\RR}{\mathbb{R}}
\newcommand{\given}{\,|\,}
\newcommand{\R}{\mathds{R}}
\newcommand{\hal}{\hat\al}
\newcommand{\hala}{\hat\al_A}
\newcommand{\bi}{\begin{enumerate}[label=\roman*)]}
\newcommand{\ei}{\end{enumerate}}
\newcommand{\ba}{\begin{array}{rcl}}
\newcommand{\ea}{\end{array}}
\newcommand{\di}{\displaystyle}
\newcommand{\mockalph}[1]{}
\theoremstyle{plain}
\newtheorem{thm}{Theorem}
\begin{document}

\begin{frontmatter}
\title{Empirical Bayes analysis of spike and slab posterior distributions}
\runtitle{Spike and slab empirical Bayes}
\thankstext{T1}{Work partly supported by the grant ANR-17-CE40-0001-01
of the French National Research Agency ANR (project BASICS)}

\begin{aug}
\author{\fnms{Isma\"el} \snm{Castillo}
\ead[label=e1]{ismael.castillo@upmc.fr}}
\and
\author{\fnms{Romain} \snm{Mismer}
\ead[label=e2]{rmismer@free.fr}}

\address{Sorbonne Unversit\'e and Universit\'e Paris Diderot\\
Laboratoire Probabilit\'es, Statistique et et Mod\'elisation\\
UMR 8001\\
4, place Jussieu\\
75005 Paris, France\\
\printead{e1,e2}}

\runauthor{I. Castillo and R. Mismer}
\runtitle{Spike and Slab posterior convergence}

\affiliation{}

\end{aug}

\begin{abstract}
In the sparse normal means model, convergence of the Bayesian posterior distribution associated to spike and slab prior distributions is considered. The key sparsity hyperparameter is calibrated via marginal maximum likelihood empirical Bayes. The plug-in posterior squared--$L^2$ norm is shown to converge at the minimax rate for the euclidean norm for appropriate choices of spike and slab distributions. Possible choices include standard spike and slab with heavy tailed slab, and the spike and slab LASSO of Ro{\v c}kov\'a and George with heavy tailed slab. Surprisingly, the popular Laplace slab is shown to lead to a suboptimal rate for the empirical Bayes posterior itself. This provides a striking example where 
convergence of aspects of the empirical Bayes posterior such as the posterior mean or median does not entail convergence of the complete empirical Bayes posterior itself.
\end{abstract}

\begin{keyword}[class=MSC]
\kwd[Primary ]{62G20}
\end{keyword}

\begin{keyword}
\kwd{Convergence rates of posterior distributions, spike and slab, spike and slab LASSO, Empirical Bayes}
\end{keyword}


\end{frontmatter}


\section{Introduction}


In the sparse normal means model, one observes a sequence $X=(X_1,\ldots,X_n)$
\begin{equation} \label{model}
X_i = \theta_i + \veps_i,\quad i=1,\ldots,n,
\end{equation}
with $\theta=(\theta_1,\ldots,\theta_n) \in\RR^n$ and $\veps_1,\ldots,\veps_n$ 
i.i.d. $\cN(0,1)$. Given $\te$, the distribution of $X$ is a product of Gaussians and is  denoted by $P_\te$. Further, one assumes that the `true' vector $\theta_0$ belongs to 
\begin{equation*} 
 \ell_0[s_n] = \left\{\te\in\RR^n,\ \#\{i:\ \te_i\neq0\}\le s_n \right\},
\end{equation*}
the set of vectors that have at most $s_n$ nonzero coordinates, where $0\le s_n\le n$. A typical {\em sparsity} assumption is that $s_n$ is a sequence that may grow with $n$  but is `small' compared to $n$ (e.g. in the asymptotics 
$n\to\infty$, one typically assumes $s_n/n=o(1)$ and $s_n\to\infty$). A natural problem is that of estimating  $\te$ with respect to the euclidean loss $\|\te-\te'\|^2=\sum_{i=1}^n(\te_i-\te_i')^2$. A benchmark is given by the minimax rate for this loss over the class of sparse vectors $\ell_0[s_n]$. Denoting 
\[ r_n:=2s_n\log(n/s_n),\]
 \cite{djhs92} show that the minimax rate equals $(1+o(1))r_n$ as $n\to\infty$.

Taking a Bayesian approach, one of the simplest and arguably most natural classes of prior distributions in this setting is given by so-called {\em spike and slab} distributions,
\[ \te \, \sim \, \bigotimes_{i=1}^n\, (1-\al)\delta_0 + \al G, \]
where $\delta_0$ denotes the Dirac mass at $0$, the distribution $G$ has density $\ga$ with respect to Lebesgue measure and $\al$ belongs to $[0,1]$. These priors were introduced and advocated in a number of  papers, including \cite{mitchellbeauchamp, george, georgefoster, yuanlin}. One important point is the calibration of the tuning parameter $\al$, which can be done in a number of ways, including: deterministic $n$-dependent choice, data-dependent choice based on a preliminary estimate $\hat\al$, fully Bayesian choice based on a prior distribution on $\al$. Studying the behaviour of the posterior distributions in sparse settings is currently the object of a lot of activity. A brief (and by far not exhaustive) overview of recent works is given below. Given a prior distribution $\Pi$ on $\te$, and interpreting $P_\te$ as the law of $X$ given $\te$, one forms the posterior distribution $\Pi[\cdot\given X]$ which is the law of $\te$ given $X$. The frequentist analysis of the posterior distribution consists in the study of the convergence of $\Pi[\cdot\given X]$ in probability under $P_{\te_0}$, thus assuming that the data has actually been generated from some `true' parameter $\te_0$.

In the present paper, we follow this path and are more particularly interested in obtaining a uniform bound on the posterior squared $L^2$-moment of the order of the optimal minimax rate, that is in proving, with $C$ a large enough constant,
\begin{equation} \label{goal}
 \sup_{\te_0\in\ell_0[s_n]} E_{\te_0} \int \|\te-\te_0\|^2 d\Pi(\te\given X)
\le C r_n
\end{equation} 
for $\Pi$ a prior distribution constructed using a spike and slab approach, whose prior parameters may be calibrated using the data, that is following an empirical Bayes method. 
This is of interest for at least three reasons
\begin{itemize}
\item this provides adaptive convergence rates for the entire posterior distribution, using a fully data-driven procedure. This is more than obtaining convergence of aspects of the posterior such that posterior mean or mode, and in fact may require different conditions on the prior, as we shall see below. 
\item the inequality \eqref{goal} automatically implies convergence of several commonly used point estimators derived from the posterior $\Pi[\cdot\given X]$: it  implies convergence at rate $Cr_n$ of the posterior mean $\int\te d\Pi(\te\given X)$ (using Jensen's inequality, see e.g. \cite{cv12}), but also of the coordinatewise posterior median (see the supplement of \cite{cv12} for details) and in fact of any fixed posterior coordinatewise quantile, for instance the quantile $1/4$ of $\Pi[\cdot\given X]$. It also implies, using  Tchebychev's inequality, convergence of the posterior distribution at rate $M_nr_n$ for $\|\cdot\|^2$ as in \eqref{cvm} below with $M=M_n$, for any $M_n\to\infty$.  
\item knowing \eqref{goal} is a first step towards results for {\em uncertainty quantification}, in particular for the study of certain {\em credible sets}. Indeed, 
\eqref{goal} suggests a natural way to build such a set, that is 
 $\cC\subset\RR^n$ with $\Pi[\cC\given X] \ge 1-\al$ for a given $\al\in(0,1)$. Namely,  define $\cC=\{\te:\ \|\te-\bar\te\|^2\le r_X\}$, with $\bar\te$ the posterior mean (or another suitable point estimate of $\theta$) and $r_X$ a large enough multiple of the $(1-\al)$--quantile of $\int\|\te-\bar\te\|^2d\Pi(\te\given X)$. 
\end{itemize} 
The present work is the first of a series of papers where we study  aspects of inference using spike and slab prior distributions. In particular, based on the present results, the behaviour of the previously mentioned credible sets is studied in the forthcoming paper \cite{cs17}.

{\em Previous results on frequentist analysis of spike and slab type priors.}
In a seminal paper, Johnstone and Silverman \cite{js04} considered estimation of $\te$ using spike and slab priors combined with an empirical Bayes method for choosing $\al$. They chose $\al=\hat\al$ based on a marginal maximum likelihood approach to be described in more details below. Denoting $\hat\te$ the associated posterior median (or posterior mean), \cite{js04}
established that
\[ \sup_{\te_0\in\ell_0[s_n]} E_{\te_0}\|\hat\te - \te_0\|^2 \le Cr_n, \]
thereby proving minimaxity up to a constant of this estimator over $\ell_0[s_n]$. The estimator is adaptive, as the knowledge of $s_n$ is not required in its construction.

In \cite{cv12}, convergence of the posterior distribution is studied in the case $\al$ is given a prior distribution. If $\al\sim \text{Beta}(1,n+1)$, $\Pi$ is the corresponding hierarchical prior, and $\Pi[\cdot\given X]$ the associated posterior distribution, it is established in \cite{cv12}  that for large enough $M$, as $n\to\infty$,
\begin{equation} \label{cvm}
 \sup_{\te_0\in\ell_0[s_n]}  
 E_{\te_0} \Pi[\|\te-\te_0\|^2 \le M r_n \given X] \to 1.
\end{equation} 
In \cite{martin2014}, Martin and Walker use a fractional likelihood approach to construct a certain empirical Bayes spike and slab prior, where the idea is to reweight the standard spike and slab prior by a power of the likelihood.  They derive rate-optimal concentration results for the corresponding posterior distribution and  posterior mean. 

A related class of prior distributions recently put forward by Ro{\v c}kov\'a \cite{rockova17} and Ro{\v c}kov\'a and George \cite{rockovageorge17},  is given by 
\[ \te \, \sim \, \bigotimes_{i=1}^n \, (1-\al)G_0 + \al G_1, \] 
where both distributions $G_0, G_1$ have densities with respect to Lebesgue measure. The authors in particular consider the choices $G_0=\text{Lap}(\la_0)$ and $G_1=\text{Lap}(\la_1)$, where $\text{Lap}(\la)$ denotes the Laplace (double-exponential) distribution.  Taking $\la_0$ large enough enables one to mimic the spike of the standard spike and slab prior, and the fact that both $G_0, G_1$ are continuous distributions offers some computational advantages, especially when working with the posterior mode. One can also note that the posterior mode when $\al=1$ leads to the standard LASSO estimator. For this reason, the authors in \cite{rockova17, rockovageorge17} call this prior the {\em spike and slab LASSO} prior. It is shown in \cite{rockova17}, Theorem 5.2 and corollaries, that a certain deterministic $n$-dependent choice of $\al, \la_0, \la_1$ (but independent on the unknown $s_n$)  leads to posterior convergence at near-optimal rate $s_n\log{n}$, while putting a prior on $\al$ can yield (\cite{rockova17}, Theorem 5.4) the minimax rate for the posterior, if  a certain condition on the strength of the true non-zero coefficiencents of $\te_0$ is verified.

{\em Other priors and related work.}
We briefly review other options to induce sparsity using a Bayesian approach. One option considered in \cite{cv12} is first to draw a subset $S\subset\{1,\ldots,n\}$ at random and then to draw nonzero coordinates on this subset only.
 That is, sample first a dimension $k\in\{0,\ldots,n\}$ at random according to some prior $\pi$. Given $k$, sample $S$ uniformly at random over subsets of size $k$ and finally set
\begin{align*}  
\te_i & \sim G \quad i \in S\\
\te_i & = 0 \quad i \notin S.
\end{align*}  
Under the assumption that the prior $\pi$ on $k$ is of the form, referred to as the complexity prior,
\begin{equation} \label{dimcomp}
 \pi(k) = c e^{-ak\log(nb/k)},
\end{equation}
\cite{cv12} show that 
under this prior, both \eqref{cvm} and \eqref{goal} are satisfied. However, such a `strong' prior on the dimension is not necessary at least for \eqref{cvm} to hold: it can be checked for instance, for $\pi$ the prior on dimension induced by the spike and slab prior on $\te$ with $\al\sim \text{Beta}(1,n+1)$, that $\pi(s_n)\asymp \exp(-cs_n)\gg \exp(-cs_n\log(n/s_n))$.   
So in a sense the complexity prior `penalises slightly more than necessary'. 

Another popular way to induce sparsity is via the so-called {\em horseshoe} prior, which draws a $\te$ from a continous distribution which is itself a mixture. As established in \cite{vsv17}--\cite{vsvuq17} the horseshoe yields the nearly-optimal rate $s_n\log{n}$ uniformly over the whole space $\ell_0[s_n]$, up again to the correct form of the logarithmic factor. In a different spirit but still without using Dirac masses at $0$, the paper \cite{jiangzhang09} shows that, remarkably, it is also possible to adopt an empirical Bayes approach on the entire unknown distribution function $F$ of the vector $\te$, interpreting $\te$ as sampled from a certain distribution, and the authors derive oracle results over $\ell^p, p>0,$ balls for the plug-in posterior mean (not including the case $p=0$ though). We also note the interesting work \cite{salometal} that investigates necessary and sufficient conditions for sparse continuous priors to be rate-optimal. However the latter is for a fixed regularity parameter $s_n$, while the results decribed in Section \ref{sec-main} (in particularity the suboptimality phenomenon, but also upper-bounds using the empirical Bayes approach) are related to adaptation.

Using complexity--type priors on the number of non-zero coordinates, Belitser and co-authors \cite{babbel10}--\cite{belnur15} consider Gaussian priors on non-zero coefficients, with a recentering of the posterior mean at the observation $X_i$-- for those coordinates $i$ that are selected-- to adjust for overshrinkage. In \cite{belnur15}, oracle results for the corresponding posterior are derived, that in particular imply convergence at the minimax rate up to constant over $\ell_0[s_n]$, and the authors also derive results on uncertainty quantification by studying the frequentist coverage of credible sets using their procedure.

For further references on the topic, in particular about relationships between spike and slab priors and absolutely continuous counterparts such as the horseshoe or the spike and slab LASSO, we refer to the paper \cite{vsvuq17} and its discussion by several authors of the previously mentioned works.

{\em Overview of results and outline.}  This paper obtains the following results.
\begin{enumerate}
\item For the spike and slab prior, in Section \ref{sec-subop} we establish lower bound results that show that the popular Laplace slab yields suboptimal rates when the complete empirical Bayes posterior is considered.
\item In Sections \ref{sec-opt} and \ref{sec-mod}, we establish rate-optimal results for the posterior squared $L^2$--moment for the usual spike and slab with a Cauchy slab, when the prior hyperparameter is chosen via a marginal maximum likelihood method. 
\item In Section \ref{sec-ssl}, the spike and slab LASSO prior is considered and we provide a near-optimal adaptive rate for the corresponding complete empirical Bayes posterior distribution. 
\end{enumerate}
Section \ref{sec-main} introduces the framework, notation, and the main results, ending with a brief simulation study in  Section \ref{sec-sim} and discussion. Section \ref{sec-pr} gathers the proofs of the lower-bound results as well as upper-bounds on the spike and slab prior. Technical lemmas for the spike and slab prior can be found in Section \ref{sec-tec}, while Sections \ref{sec-sslpr}--\ref{seclasso} contain the proof of the result for the spike and slab LASSO prior.

For real-valued functions $f, g$, we write $f\leqa g$ if there exists a universal constant $C$ such that $f(x)\le Cg(x)$, and $f \geqa g$ is defined similarly. When $x$ is a positive real number or an integer, we write $f(x)\asymp g(x)$  if  there exists positive constants $c,  C, D$ such that for $x\ge D$, we have $cf(x)\le g(x)\le Cf(x)$. For reals $a,b$, one denotes $a\wedge b=\min(a,b)$ and $a\vee b=\max(a,b)$.

\section{Framework and main results} 
\label{sec-main}
 
\subsection{Empirical Bayes estimation with spike and slab prior}
\label{sec-not}

In the setting of model \eqref{model}, the spike and slab prior on $\te$ with fixed parameter $\alpha\in[0,1]$ is
\begin{equation} \label{priorsas}
 \Pi_\alpha \sim \otimes_{i=1}^n (1-\al)\delta_0 + \al G(\cdot), 
\end{equation} 
where   $G$ is a given probability measure on $\RR$. 
We consider the following choices 
\[ G = 
\begin{cases}
& \text{Lap}(1) \\
or\\
 & \text{Cauchy}(1)
\end{cases} 
  \]
where Lap$(\la)$ denotes the Laplace (double exponential) distribution with parameter $\la$ and Cauchy$(1)$ the standard Cauchy distribution.
Different choices of parameters and prior distributions are possible (a brief discussion is included below) but for clarity of exposition we stick to these common distributions. In the sequel $\ga$ denotes the density of $G$ with respect to Lebesgue measure.

By Bayes' formula the posterior distribution under \eqref{model} and \eqref{priorsas} with fixed $\al\in[0,1]$ is 
\begin{equation} \label{post}
\Pi_\alpha[\cdot\given X] 
\sim \otimes_{i=1}^n (1-a(X_i))\delta_0 + a(X_i) G_{X_i}(\cdot),
\end{equation}
where, denoting by $\phi$ the standard normal density and 
$g(x)=\phi*G(x)=\int \phi(x-u)dG(u)$ the convolution of $\phi$ and $G$ at point $x\in\RR$,  the posterior weight $a(X_i)$ is given by, for any $i$,
\begin{equation} \label{postwei}
 a(X_i) = a_\al(X_i)=\frac{\al g(X_i)}{(1-\alpha)\phi(X_i) + \alpha g(X_i)}. 
\end{equation} 
The distribution $G_{X_i}$ has density
\begin{equation} \label{postdc}
\ga_{X_i}(\cdot) := \frac{\phi(X_i-\cdot) \ga(\cdot)}{g(X_i)}
\end{equation}
with respect to Lebesgue measure on $\RR$. The behaviour of the posterior distribution $\Pi_\al[\cdot\given X]$ heavily depends on the choices of the smoothing parameters $\al$ and $\ga$. 
It turns out that some aspects of this distribution are thresholding-type estimators, as established in \cite{js04}. 

{\em Posterior median and threshold $t(\al)$.} The posterior median $\hat\te^{med}_\al(X_i)$ of the $i$th coordinate has a thresholding property:  there exists $t(\al)>0$ such that $\hat\te^{med}_\al(X_i)=0$ if and only if $|X_i|\le t(\al)$. 
A default choice can be $\al=1/n$; one can check that this leads to a posterior median behaving similarly as a hard thresholding estimator with threshold $\sqrt{2\log n}$. One can significantly improve on this default choice by taking a well-chosen data-dependent $\al$. 

In order to choose $\al$, in this paper we follow the empirical Bayes method proposed in \cite{js04}. The idea is to estimate $\al$ by maximising the marginal likelihood in $\al$ in the Bayesian model, which is the density of $\al\given X$. The log-marginal likelihood in $\alpha$ can be written as
\begin{equation} \label{mli}
\ell(\al) = \ell_n(\al;X)= \sum_{i=1}^n \log( (1-\al)\phi(X_i) + \al g(X_i)). 
\end{equation}
Let $\hal$ be defined as the maximiser of the log-marginal likelihood 
\begin{equation} \label{defhal}
\hal = \underset{\al\in \cA_n}{\text{argmax}}\ \ell_n(\al;X),
\end{equation}
where the maximisation is restricted to 
$\cA_n=[\al_n,1]$, 
with $\al_n$ defined by
\[ t(\al_n)=\sqrt{2\log{n}}. \]
The reason for this restriction is that one does not need to take $\al$ smaller than $\al_n$, which would correspond to a choice of $\al$ `more conservative' than hard-thresholding at threshold level $\sqrt{2\log{n}}$.  

In \cite{js04}, Johnstone and Silverman prove that the posterior median $\hat\al^{med}(X_i)$ has remarkable optimality properties, for many choices of the slab density $\ga$. For $\ga$ with tails `at least as heavy as' the Laplace distribution, then this point estimator converges at the minimax rate over $\ell_0[s_n]$. More precisely, it follows from Theorem 1 in \cite{js04} that there exists  constants $C, c_0, c_1$ such that if
\begin{equation} \label{techsn}
c_1\log^2 n \le  s_n\le c_0 n,
\end{equation}
then the posterior median $\hat\te^{med}_{\hat\al}=(\hat\te^{med}_{\hat\al}(X_i))_{1\le i\le n}$ is rate optimal
\begin{equation} \label{postmedian}
 \sup_{\te\in \ell_0[s_n]} E_{\te}\|\hat\te^{med}_{\hat\al}-\te\|^2 \le Cs_n\log(n/s_n).
\end{equation} 
One can actually remove the lower bound in condition \eqref{techsn} -- see Theorem 2  in \cite{js04}  -- by a more complicated choice of $\hat\al$, for which $\hat\al$ in \eqref{defhal} is replaced by a smaller value if the empirical Bayes estimate is close to $\al_n$ given by   $t(\al_n)=\sqrt{2\log{n}}$. In the present paper for simplicity of exposition we first work under the condition \eqref{techsn}. In Section \ref{sec-mod} below, we show that the lower bound part of the condition can be removed when working with the modified estimator as in \cite{js04}.

{\em Plug-in posterior distribution.} 
The posterior we consider in this paper is $\Pi_{\hat\al}[\cdot\given X]$, that is the distribution given by \eqref{post}, where $\al$ has been replaced by its empirical Bayes (EB) estimate $\hat\al$ given by \eqref{defhal}. This posterior is called complete EB posterior in the sequel. The value $\hat\al$ is easily found numerically, as implemented in the R package {\tt EbayesThresh}, see \cite{js05}.
As noted in \cite{js04}, the posterior median $\hat\al^{med}(X_i)$ displays excellent behaviour in simulations.  However, the entire posterior distribution $\Pi_{\hat\al}[\cdot\given X]$ has not been studied so far. It turns out that the behaviour of the posterior median does not always reflect the behaviour of the complete posterior, as is seen in the next subsection.

\newpage
\subsection{Suboptimality of the Laplace slab for the complete EB posterior distribution} 
\label{sec-subop}

\begin{thm} \label{negres}
Let $\Pi_\al$ be the spike and slab prior distribution \eqref{priorsas} with slab distribution $G$ equal to the Laplace distribution Lap$(1)$. Let $\Pi_{\hat \al}[\cdot\given X]$ be the corresponding plug-in posterior distribution given by \eqref{post}, with $\hat \al$ chosen by the empirical Bayes procedure \eqref{defhal}. 
There exist $D>0$, $N_0>0$, and $c_0>0$ such that, for any $n\ge N_0$ and any $s_n$ with  $1\le s_n \leq c_0 n$, there exists $\te_0\in \ell_0[s_n]$ such that, 
\[ E_{\te_0} \int \|\te-\te_0\|^2 d\Pi_{\hat\al}[\te\given X] \ge D s_n 
e^{\sqrt{\log{(n/s_n)}}}.
\]
\end{thm}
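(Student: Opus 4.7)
The plan is to exhibit a specific hard $\te_0 \in \ell_0[s_n]$ and lower bound the plug-in posterior second moment for that choice.

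I would begin by selecting $\te_0$. Two natural candidates come to mind: (i) $\te_0 = 0$, probing the behaviour of $\hat\al$ under pure noise; and (ii) $\te_0$ with exactly $s_n$ coordinates equal to a common value $\mu_n$ near the critical level $\sqrt{2\log(n/s_n)}$, which forces the marginal likelihood to balance detection of weak signals against spurious noise inclusions. The hard $\te_0$ will be the one maximising the posterior second moment. I would then control $\hat\al$ under $P_{\te_0}$ via the score $\ell_n'(\al) = \sum_i \be_\al(X_i)$ with $\be_\al(x) = (g(x)-\phi(x))/[(1-\al)\phi(x)+\al g(x)]$, localising $\hat\al$ in an interval $[\al_*^-, \al_*^+]$ with $P_{\te_0}$-probability bounded away from zero. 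The Laplace asymptotics $g(x) \asymp \tfrac12 e^{1/2-|x|}$ enter critically through $\be_\al$ and shift $\hat\al$ above the oracle level $s_n/n$.

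With $\hat\al$ localised, I would lower bound the plug-in posterior second moment coordinate by coordinate. For the Laplace slab, the conditional density factorises as
\[
 \phi(X-\te)\ga(\te) = \tfrac12 e^{-X+1/2}\phi(\te-(X-1))\ind_{\te>0} + \tfrac12 e^{X+1/2}\phi(\te-(X+1))\ind_{\te<0},
\]
so $\bE_{\ga_X}[(\te-\te_{0,i})^2]$ admits a closed form as a mixture of truncated-Gaussian second moments. Using the elementary inequality
$\int (\te-\te_{0,i})^2 d\Pi_{\hat\al}[\te\given X_i] \ge a_{\hat\al}(X_i)\bE_{\ga_{X_i}}[(\te-\te_{0,i})^2]$
and restricting the expectation over $X_i$ to a carefully chosen band of magnitudes, I would assemble a coordinate-wise lower bound and sum to the claimed rate.

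The main obstacle is producing the precise subexponential factor $e^{\sqrt{\log(n/s_n)}}$. This rate is neither polynomial in $\log(n/s_n)$ nor in $n/s_n$, and should arise from a saddle-point evaluation balancing three competing exponentials: the Gaussian tail $e^{-x^2/2}$ of $\phi$, the Laplace-induced factor $e^{-|x|}$ inherent in $g$, and the level $\hat\al$ itself. Identifying the correct critical band of $|X_i|$ and the matched $\hat\al$ at which these three factors balance, and showing that the balance point is indeed attained with non-vanishing probability by the empirical Bayes estimate, is the technical crux of the lower bound.
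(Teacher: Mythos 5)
Your choice of $\te_0$ in case (ii), with $s_n$ coordinates near $\sqrt{2\log(n/s_n)}$, is exactly the one the paper uses; the alternative $\te_0=0$ would not work, since under pure noise the MMLE estimate $\hat\al$ collapses to the boundary $\al_n$ and the posterior second moment stays at minimax order, so there is no dichotomy to resolve --- the signal at the critical level is essential to pull $\hat\al$ upward. Likewise, controlling $\hat\al$ through the score and Bernstein's inequality is the right route, though you only need a one-sided deviation bound $P_{\te_0}(\hat\al<\al^*)\le e^{-cs_n}$ rather than a two-sided localisation.

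The gap is in the part you flag as the ``technical crux'': you do not actually identify the mechanism producing the factor $e^{\sqrt{\log(n/s_n)}}$, and the ``saddle-point balancing three exponentials over a band of $|X_i|$'' is not where it comes from. The blow-up lives entirely in the size of $\hat\al$, not in any critical band of observations. Concretely: one defines $\al^*$ as the solution of $\al\tilde m(\al)=\tfrac14 s_n/n$, where $\tilde m(\al)=-E_0\beta(X,\al)$. For a Laplace slab, Lemma~\ref{lemmtilde} gives $\tilde m(\al)\asymp g(\zeta(\al))\asymp e^{-\zeta(\al)}$ (exponential in $\zeta$, not in $\zeta^2$), while $\zeta(\al)\asymp\sqrt{2\log(1/\al)}$ always; solving yields $\al^*\gtrsim(s_n/n)\,e^{\sqrt{2\log(n/s_n)}}$. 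So $\hat\al$, which is forced to exceed $\al^*$ with probability $1-e^{-cs_n}$, is itself larger than the oracle level $s_n/n$ by exactly the claimed subexponential factor. This is where the Laplace tail enters; with a Cauchy slab one instead gets $\tilde m(\al)\asymp\zeta g(\zeta)$, which is a polynomial correction, so no blow-up.

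Once $\hat\al\ge\al^*$ is in hand, your coordinatewise lower bound should \emph{not} be restricted to a band of $|X_i|$; that is both unnecessary and loses the uniform control one wants. The paper proves (Lemma~\ref{lemlb}) that for the Laplace slab $r_2(\al,0,x)\ge C_0\al$ holds uniformly over all real $x$, by an elementary argument: $a(x)\ge\al\,g/(g\vee\phi)(x)$, and both $g/(g\vee\phi)$ and $\int u^2\ga_x(u)\,du$ are continuous, strictly positive, and bounded away from zero on $\RR$ (the second quantity tends to $+\infty$ with $|x|$). Summing only over the $n-s_n$ \emph{zero} coordinates then gives $\int\|\te-\te_0\|^2 d\Pi_{\hat\al}\ge C_0(n-s_n)\hat\al\ge C_0(n-s_n)\al^*$ on the event $\{\hat\al\ge\al^*\}$, and taking expectations finishes the proof. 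Your inequality $\int(\te-\te_{0,i})^2 d\Pi_{\hat\al}[\te\given X_i]\ge a_{\hat\al}(X_i)E_{\ga_{X_i}}[(\te-\te_{0,i})^2]$ is correct (it is an identity for $\te_{0,i}=0$), but the truncated-Gaussian mixture decomposition is more machinery than the lower bound requires, and summing over the signal coordinates, which you seem to intend, is both unneeded and harder to control.
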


Theorem \ref{negres} implies that taking a Laplace slab leads to a suboptimal convergence rate in terms of the posterior squared $L^2$--moment. This result is surprising at first, as we know by \eqref{postmedian} that the posterior median converges at optimal rate $r_n$. The posterior mean also converges at rate $r_n$ uniformly over $\ell_0[s_n]$, by Theorem 1 of \cite{js04}. So at first sight it would be quite natural to expect that so does the posterior second moment. 

One can naturally ask whether the suboptimality result from Theorem \ref{negres} could come from considering an integrated $L^2$--moment, instead of simply asking for a posterior convergence result in probability, as is standard in the posterior rates literature following \cite{ggv}. We now derive a stronger result than Theorem \ref{negres} under the mild condition $s_n \gtrsim \log^2 n$. The fact that the result is stronger follows from  bounding from below the integral in the display of Theorem \ref{negres} by the integral restricted to the set where $\|\te-\te_0\|^2$ is larger than the target lower bound rate.
\begin{thm} \label{negres2}
Under the same notation as in Theorem \ref{negres}, if $\Pi_\al$ is a spike and slab distribution with as slab $G$ the Laplace distribution, there exists $m>0$  such that for any $s_n$ with $s_n/n\to 0$ and $\log^2{n}=O(s_n)$ as $n\to\infty$, there exists $\te_0\in\ell_0[s_n]$ such that, as $n\to\infty$, 
\[ E_{\te_0} \Pi_{\hat\al} \left[\, \|\te-\te_0\|^2  \le m s_n 
e^{\sqrt{2\log{(n/s_n)}}}\given X\,\right] =o(1). 
\]
\end{thm}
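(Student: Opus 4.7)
The plan is to convert the in-expectation lower bound of Theorem \ref{negres} into a pointwise posterior concentration statement by computing the conditional mean and variance of $\|\te-\te_0\|^2$ under $\Pi_{\hat\al}[\cdot\given X]$ and applying a Chebyshev-type bound. First, I would take the same worst-case $\te_0\in\ell_0[s_n]$ as used in the proof of Theorem \ref{negres}. The analysis there produces an event $A_n$, with $P_{\te_0}(A_n)\to 1$, on which the empirical Bayes parameter $\hat\al$ and the posterior weights $a_{\hat\al}(X_i)$ are quantitatively controlled, and on which a random subset $J=J(X)\subset\{1,\dots,n\}$ of ``active'' indices of size $|J|\gtrsim s_n$ can be identified, carrying non-negligible posterior mass on non-zero values of $\te_i$.

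Since $\Pi_{\hat\al}[\cdot\given X]$ is a product over coordinates,
\[ Y:=\|\te-\te_0\|^2=\sum_{i=1}^n Y_i,\qquad Y_i=(\te_i-\te_{0,i})^2, \]
with the $Y_i$'s conditionally independent given $X$. The first key step is to sharpen the argument behind Theorem \ref{negres} into a pointwise lower bound
\[ M(X):=E^{\Pi_{\hat\al}}[Y\given X]\,\ge\, c\,s_n\, e^{\sqrt{2\log(n/s_n)}} \qquad\text{on }A_n, \]
for some $c>0$; note that this target rate is larger than the in-expectation rate of Theorem \ref{negres}, which is why Theorem \ref{negres2} is strictly stronger. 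The second key step is to bound the conditional variance
\[ V(X):=\mathrm{Var}^{\Pi_{\hat\al}}(Y\given X)=\sum_{i=1}^n \mathrm{Var}^{\Pi_{\hat\al}}(Y_i\given X). \]
For the Laplace slab and moderately large $|X_i|$, the density $\ga_{X_i}$ is close to a Gaussian of variance of order $1$, so its fourth moment is controlled by the square of its second moment. Coordinatewise, this delivers $\mathrm{Var}^{\Pi_{\hat\al}}(Y_i\given X)\lesssim (E^{\Pi_{\hat\al}}[Y_i\given X])^2/a_{\hat\al}(X_i)$ up to lower order terms; summing and using that the dominant contributions come from $i\in J$ produces $V(X)\lesssim M(X)^2/|J|_{\mathrm{eff}}$ for an effective count $|J|_{\mathrm{eff}}\to\infty$ on $A_n$, thanks to the hypothesis $\log^2 n=O(s_n)$.

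Combining the two ingredients, Chebyshev's inequality applied conditionally on $X$ gives, on $A_n$,
\[ \Pi_{\hat\al}\bigl[\,Y\le M(X)/2\,\big|\,X\bigr]\,\le\, 4V(X)/M(X)^2\,=\,o(1). \]
Choosing $m<c/2$ so that $m\, s_n\, e^{\sqrt{2\log(n/s_n)}}\le M(X)/2$ on $A_n$ and integrating against $P_{\te_0}$, using $P_{\te_0}(A_n^c)=o(1)$, delivers the stated conclusion. The main obstacle will be the variance step: one must pinpoint the range of $|X_i|$'s that dominate the posterior second moment for the Laplace slab, count the indices falling in that range under $P_{\te_0}$ via Gaussian tail estimates, and verify that $s_n\gtrsim\log^2 n$ is precisely what forces the variance-to-mean-squared ratio to vanish. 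Once the right effective count $|J|_{\mathrm{eff}}$ is isolated, the remaining moment computations on $G_{X_i}$ are routine, since the Gaussian-like shape of $\ga_{X_i}$ for large $|X_i|$ makes the fourth-to-second moment ratio uniformly bounded.
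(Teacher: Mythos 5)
Your plan coincides almost exactly with the paper's proof of Theorem \ref{negres2}: same worst-case $\te_0$ as in Theorem \ref{negres}, same product structure exploited to make the cross-terms vanish, and the same conditional Chebyshev inequality bounding $\Pi_{\hat\al}[\|\te-\te_0\|^2<v_n\mid X]$ by $4V_X^{-2}\int\{\|\te-\te_0\|^2-V_X\}^2\,d\Pi_{\hat\al}$, with the fourth-moment sum controlled via analogues of Lemmas \ref{lemns}--\ref{lemsig}, all on the high-probability event $\hat\al\in[\al^*,c\al^*]$. Two small points of detail: the lower bound on $M(X)$ does not come from a set $J$ of $\gtrsim s_n$ ``active'' indices, but from \emph{all} $n-s_n$ null coordinates, each contributing $\gtrsim\hat\al$ by Lemma \ref{lemlb}, so the effective count is $n\hat\al\gtrsim s_n e^{\sqrt{2\log(n/s_n)}}$; and the hypothesis $\log^2 n=O(s_n)$ is used not to make the variance-to-mean-squared ratio vanish (that is automatic once $\hat\al\ge\al^*$ and $n/s_n\to\infty$), but to make the Bernstein-type deviation bound $P_{\te_0}[\hat\al\notin[\al^*,c\al^*]]\le e^{-cs_n}$ decay faster than polynomially in $n$, which the random-$\hat\al$ fourth-moment bounds need.
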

Theorem \ref{negres2}, by  providing a lower bound in the spirit of \cite{ic08},  shows that the answer to the above question is negative, and for a Laplace slab, the plug-in posterior  $\Pi_{\hat \al}[\cdot\given X]$ does not converge at minimax rate uniformly over $\ell_0[s_n]$. 

Note that the suboptimality occuring here does not result from an artificially constructed example (we work under exactly the same framework as \cite{js04}) and that  this has important (negative)  consequences for construction of credible sets. Due to the rate-suboptimality of the EB Laplace-posterior, typical credible sets derived from it (such as, e.g., taking quantiles of a recentered posterior second moment) will inherit the suboptimality in terms of their diameter, and therefore will not be of optimal size. Fortunately, it is still possible to achieve optimal rates for certain spike and slab EB posteriors: the previous phenomenon indeed disappears if the tails of the slab in the prior distribution are heavy enough, as seen in the next subsection.

\subsection{Optimal posterior convergence rate for the EB spike and Cauchy slab}
\label{sec-opt}

The next result considers Cauchy tails, although other examples can be covered, as discussed below. 
In the sequel, we abbreviate by SAS prior a spike and slab prior with Cauchy slab. 

\begin{thm} \label{thm-risk}
Let $\Pi_\al$ be the SAS prior distribution \eqref{priorsas} with slab distribution $G$ equal to the standard Cauchy distribution. Let $\Pi_{\hat \al}[\cdot\given X]$ be the corresponding plug-in posterior distribution given by \eqref{post}, with $\hat \al$ chosen by the empirical Bayes procedure \eqref{defhal}. There exist $C>0$, $N_0>0$, and $c_0, c_1>0$ such that, for any $n\ge N_0$, for any $s_n$ such that \eqref{techsn} is satisfied for such $c_0,c_1$, 
\[ \sup_{\te_0\in \ell_0[s_n]} E_{\te_0} \int \|\te-\te_0\|^2 d\Pi_{\hat\al}(\te\given X) 
\le Cr_n.\]
If one only assumes $s_n\le c_0n$ in \eqref{techsn}, then the last statement holds with the bound $Cr_n$ replaced by $Cr_n+C\log^3{n}$. 
\end{thm}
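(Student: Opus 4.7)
The plan is to exploit the conditional product structure of the spike and slab posterior given $\al$, and then control the data-dependent $\hat\al$ by bracketing it between deterministic surrogates. For fixed $\al$, the posterior \eqref{post} factorises across coordinates, so
$$\int \|\te-\te_0\|^2 \, d\Pi_\al(\te\given X) = \sum_{i=1}^n \rho_\al(X_i,\te_{0,i}),$$
where $\rho_\al(x,\mu):=a_\al(x)\int(u-\mu)^2\ga_x(u)\,du + (1-a_\al(x))\mu^2$ is affine in $a_\al(x)$, and the weight $a_\al(x)$ in \eqref{postwei} is monotone increasing in $\al$. By construction $\hat\al\ge\al_n$; following the score-function analysis of \cite{js04}, I would identify a threshold $\tilde\al=\tilde\al(s_n)$ of order $(s_n/n)$ up to logarithmic factors such that the derivative $\partial_\al \ell_n(\tilde\al;X)$ has strongly negative expectation uniformly over $\te_0\in\ell_0[s_n]$, yielding $P_{\te_0}(\hat\al>\tilde\al)\le e^{-cr_n}$ via Gaussian concentration. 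On the good event $\cG=\{\al_n\le\hat\al\le\tilde\al\}$, the monotonicity of $a_\al$ combined with the sign of $\int(u-\mu)^2\ga_x(u)\,du-\mu^2$ lets me replace $a_{\hat\al}(x)$ by whichever of $a_{\al_n}(x)$ or $a_{\tilde\al}(x)$ dominates the relevant term of $\rho$.

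For $i$ outside the support $S_0$ of $\te_0$ (at most $n-s_n$ indices), $\rho_\al(X_i,0)=a_\al(X_i)m_2(X_i)$ with $m_2(x):=\int u^2\ga_x(u)\,du$. Here the Cauchy tails of $G$ are decisive: $m_2(x)\lesssim 1+x^2$ uniformly in $x$, whereas for the Laplace slab $m_2(x)$ explodes once $|x|$ exceeds the threshold, which is precisely the mechanism behind Theorem \ref{negres}. Combined with the standard bound $a_{\tilde\al}(x)\lesssim \tilde\al\, g(x)/\phi(x)$ and explicit Gaussian moment computations, $E_0[\rho_{\tilde\al}(X,0)]\lesssim \tilde\al\log(1/\tilde\al)\asymp (s_n/n)\log(n/s_n)$, so the sum over $i\notin S_0$ contributes $O(r_n)$. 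For $i\in S_0$, the lower bound $\hat\al\ge\al_n$ forces $a_{\hat\al}(X_i)$ to be large enough once $|X_i|\gtrsim \sqrt{2\log n}$; splitting $\rho_{\hat\al}(X_i,\te_{0,i})$ into a thresholding-type term (bounded in expectation by $O(\log(n/s_n))$ as in the analysis of hard thresholding) and a Cauchy-posterior bias-variance term of the same order (using $m_2(x)\lesssim 1+x^2$ and concentration of $\ga_x$ near $x$ for large $|x|$), the $s_n$-fold sum again yields $O(r_n)$.

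The bad event $\cG^c$ contributes negligibly: the crude pointwise bound $\rho_\al(X_i,\te_{0,i})\lesssim 1+X_i^2+\te_{0,i}^2$ combined with Cauchy--Schwarz against $P_{\te_0}(\cG^c)\le e^{-cr_n}$ kills the resulting polynomial prefactor. The main obstacle will be the upper-tail bound $P_{\te_0}(\hat\al>\tilde\al)\le e^{-cr_n}$: the score $\partial_\al\ell_n$ is a sum of non-identically-distributed summands whose means and deviations must be estimated sharply enough that the exponent beats the $O(n(1+X^2))$ overhead emerging from summing per-coordinate risks, and this requires delicate uniform bounds on $g(x)/\phi(x)$ over $\al\in[\al_n,\tilde\al]$. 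For the last sentence of the theorem, when the lower bound in \eqref{techsn} is removed, the regime $s_n\le c_1\log^2 n$ is handled separately: the support contribution is crudely at most $s_n\log^2 n$, while the non-support contribution still obeys the score analysis up to an additional residual of order $\log^3 n$ arising when one can no longer exploit the size of $s_n$ to suppress lower-order terms; these combine to produce the additive $C\log^3 n$ stated.
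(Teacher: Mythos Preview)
Your overall architecture (product decomposition, bracketing $\hat\al$ between deterministic surrogates, separate treatment of support and non-support) matches the paper's. But there is a genuine gap in the signal part that prevents you from reaching the sharp rate $r_n=2s_n\log(n/s_n)$.

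You propose to control the support coordinates using only the deterministic lower bracket $\hat\al\ge\al_n$, which corresponds to threshold $t(\al_n)=\sqrt{2\log n}$. With that bracket, the term $(1-a_{\hat\al}(X_i))\te_{0,i}^2$ is bounded in expectation by $C\tilde\ta(\al_n)^2\asymp\log n$ per coordinate (cf.\ Lemma~\ref{lemfia}, last bound), so the sum over $i\in S_0$ is only $O(s_n\log n)$, not $O(s_n\log(n/s_n))$. Your claim that the ``thresholding-type term'' is $O(\log(n/s_n))$ is unjustified: hard-thresholding analysis at level $\sqrt{2\log n}$ gives $\log n$, and the dependence on $s_n$ cannot appear unless the effective threshold is of order $\sqrt{2\log(n/s_n)}$. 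The paper obtains this by introducing a \emph{signal-dependent} lower bracket $\al_2=\al(\zeta_1,\pi_1)$ (see \eqref{altp}) and invoking Lemma~\ref{lemzetaover} to show $P_{\te_0}(\hat\al<\al_2)$ is small; the nontrivial estimate \eqref{zd2}, namely $\pi_1\zeta_2^2\le C\eta_n\log(1/\eta_n)$, then yields the sharp $s_n\log(n/s_n)$ for large signals. Without this second in-probability bound on $\hat\al$ from below, your argument delivers only $s_n\log n$, which is not $O(r_n)$ when $s_n$ is a positive fraction of $n$.

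Two smaller points. First, your diagnosis that ``for the Laplace slab $m_2(x)$ explodes'' is incorrect: Lemma~\ref{lemfia} gives $\int u^2\ga_x(u)du\le C(1+x^2)$ for \emph{both} Cauchy and Laplace. The Laplace suboptimality in Theorem~\ref{negres} comes instead from $\hat\al$ itself being too large (of order $\eta_n e^{\sqrt{\log(1/\eta_n)}}$ rather than $\eta_n$, see \eqref{baet}), so that the non-support contribution $(n-s_n)\hat\al$ blows up. What the Cauchy tails buy you is that $x\mapsto(1+x^2)g(x)$ is bounded, making $E_0 r_2(\al,0,x)\le C\al\ta(\al)$ (rather than $C\al$ times something larger). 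Second, the concentration you claim, $P_{\te_0}(\hat\al>\tilde\al)\le e^{-cr_n}$, is stronger than what the paper proves: Lemma~\ref{lemzetaunder} gives only $e^{-Cs_n}$, which under \eqref{techsn} is $e^{-C\log^2 n}$ and suffices for the Cauchy--Schwarz step, but you should not overclaim the exponent.
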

Theorem \ref{thm-risk} confirms that the empirical Bayes plug-in posterior, with $\hat\alpha$ chosen by marginal maximum likelihood, converges at optimal rate with precise logarithmic factor, at least under the mild condition \eqref{techsn}, if tails of the slab distribution are heavy enough. Inspection of the proof of Theorem \ref{thm-risk} reveals that any slab density $\ga$ with tails of the order $x^{-1-\delta}$ with $\delta\in(0,2)$ gives the same result. Sensibility to the tails, in particular in view of posterior convergence in terms of $d_q$-distances, will be further investigated in \cite{cs17}. 

We note that the horseshoe prior on $\te$ considered in \cite{vsv17}--\cite{vsvuq17}  also has Cauchy-like tails, which seems to confirm that for empirical Bayes--calibrated (product--type) sparse priors, heavy tails are important to ensure optimal or near-optimal behaviour, see also the discussion \cite{icdisc17}. 

The lower bound in condition \eqref{techsn} is specific to the estimate $\hat \al$. Note that in the very sparse regime where $s_n\le c_1\log^2{n}$, the rate is no more than $C\log^3{n}$, thus missing the minimax rate by at most a logarithmic factor. This lower bound on $s_n$ can be removed and the minimax rate obtained over the whole range of sparsities $s_n$ if one modifies slightly $\hat\alpha$, where the estimator is changed if  $\hat\alpha$ is too close to the lower boundary of the maximisation interval, see Section \ref{sec-mod}.

\subsection{Posterior convergence for the EB spike and slab LASSO}
\label{sec-ssl}

Now consider the following prior on $\te$ with fixed parameter $\alpha\in[0,1]$ 
\begin{equation} \label{priorlasso}
 \Pi_\alpha \sim \otimes_{i=1}^n (1-\al)G_0(\cdot) + \al G_1(\cdot), 
\end{equation} 
where for $k=0,1$,  $G_k$ is given by
\[ G_0= \text{Lap}(\la_0),\quad G_1 = 
\begin{cases}
& \text{Lap}(\la_1) \\
\text{ or } \\
& \text{Cauchy}(1/\la_1),
\end{cases} \]
which leads to the spike and slab LASSO prior of \cite{rockovageorge17} in the case of a Laplace $G_1$, and to a heavy-tailed variant of the spike and slab LASSO  if  $G_1$ is Cauchy$(1/\la_1)$, that is if its density is $\ga_1(x)=(\la_1/\pi)(1+\la_1^2x^2)^{-1}$.  In this setting $\ga_0, \ga_1$ denote the densities of $G_0, G_1$ with respect to Lebesgue measure. We call {\em SSL prior} a spike and slab LASSO prior with Cauchy slab.

By Bayes' formula the posterior distribution under \eqref{model} and \eqref{priorlasso} with fixed $\al\in[0,1]$ is 
\begin{equation} \label{post2}
\Pi_\alpha[\cdot\given X] 
\sim \otimes_{i=1}^n (1-a(X_i))G_{0,X_i}(\cdot) + a(X_i) G_{1,X_i}(\cdot),
\end{equation}
where $g_k(x)=\phi*G_k(x)=\int \phi(x-u)dG_k(u)$ is the convolution of $\phi$ and $G_k$ at point $x\in\RR$ for $k=0,1$,  the posterior weight $a(X_i)$ is defined through the function $a(\cdot)$ given by
\[ a(x) = a_\al(x) =\frac{\al g_1(x)}{(1-\alpha)g_0(x) + \alpha g_1(x)}, \]
and if $G_k$ has density $\gamma_k$ with respect to Lebesgue measure, the distribution $G_{k,X_i}$ has density
\begin{align*}
\ga_{k,X_i}(\cdot) & := \frac{\phi(X_i-\cdot) \ga_k(\cdot)}{g_k(X_i)}.
\end{align*}
In slight abuse of notation, we keep the same notation in the case of the SSL prior for quantities such as $a(x)$ or $\hat\al$ below, as it will always be clear from the context which prior we work with. 

We consider the following specific choices for the constants $\la_0, \la_1$
\begin{equation} \label{param}
\begin{cases}
\ \la_0\ \  = & L_0 n, \qquad \quad  L_0 =5\sqrt{2\pi}, \\
\ \la_1\ \ = &  L_1,\ \  \qquad \quad L_1 = 0.05. 
\end{cases}
\end{equation}
The choice of the constants $L_0, L_1$ is mostly for technical convenience, and is similar to that of, e.g. Corollary 5.2 in \cite{rockova17}. Any other constant $L_0$ (resp. $L_1$) larger (resp. smaller) than the above value also works for the following result. The above numerical values may not be optimal.

Let $\hal$ be defined as the maximiser of the log-marginal likelihood, 
\begin{equation} \label{defhalL}
\hal = \underset{\al\in [\mathcal{C}\log n/n,1]}{\text{argmax}}\ \ell_n(\al;X),
\end{equation}
for $\cC=\cC_0(\ga_0,\ga_1)$ a large enough constant to be chosen below (this ensures that $\hal$ belongs to an interval on which we can verify that $\beta$ is increasing, see \eqref{after}). This time we do not have access to the threshold $t$, since for the SSL prior the posterior median is not a threshold estimator, so here $\mathcal{C}\log n/n$ plays the role of an approximated version of $\al_n$ in \eqref{defhal}. 

\begin{thm} \label{thm-lasso}
Let $\Pi_\al$ be the SSL prior distribution \eqref{priorlasso} with Cauchy slab and parameters $(\la_0,\la_1)$ given by \eqref{param}. 
Let $\Pi_{\hat \al}[\cdot\given X]$ be the corresponding plug-in posterior distribution given by \eqref{post2}, with $\hat \al$ chosen by the empirical Bayes procedure \eqref{defhalL}. There exist $C>0$, $N_0>0$, and $c_0, c_1>0$ such that, for any $n\ge N_0$, for any $s_n$ such that \eqref{techsn} is satisfied for such $c_0,c_1$, then 
\[ \sup_{\te_0\in \ell_0[s_n]} E_{\te_0} \int \|\te-\te_0\|^2 d\Pi_{\hat\al}(\te\given X) 
\le Cs_n\log{n}.\]
If one only assumes $s_n\le c_0n$ in \eqref{techsn}, then the last bound holds with $Cs_n\log{n}$ replaced by $C(s_n\log{n}+\log^3{n})$. 
\end{thm}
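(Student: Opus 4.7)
The plan parallels Theorem \ref{thm-risk} in structure: first, localize the empirical Bayes estimate $\hat\al$ in a deterministic interval with $P_{\te_0}$--probability going to one, then bound the posterior second moment uniformly for $\al$ in that set. The key structural difference from the SAS case is that the spike is now a continuous Lap$(\la_0)$ distribution with $\la_0=L_0 n$ rather than a Dirac mass, so both components $G_{0,X_i}$ and $G_{1,X_i}$ of the posterior \eqref{post2} contribute nontrivially to the $L^2$--moment, and one must control ratios of the convolved densities $g_0,g_1$ rather than the simpler ratio $\ga/\phi$ used in the SAS analysis. The choices $L_0=5\sqrt{2\pi}$ large and $L_1=0.05$ small ensure that the spike is sharply concentrated at $0$ while the Cauchy slab remains heavy-tailed, which will make the relevant density ratios controllable.

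\textbf{Step 1 (localization of $\hat\al$).} The estimator is characterized either by the first-order condition $\ell_n'(\hat\al;X)=0$ on the interior of $[\mathcal{C}\log n/n,1]$ or by boundary pinning. I would first verify the monotonicity of the auxiliary function $\beta$ alluded to in the statement (cf.\ \eqref{after}) on this interval, which is what fixes the constant $\mathcal{C}=\mathcal{C}_0(\ga_0,\ga_1)$ in \eqref{defhalL}. Next, I would split the score $\ell_n'(\al;X)$ into null and signal contributions and apply Bernstein-type concentration around its expectation, after bounding the variance of each summand using tail estimates on $g_1/g_0$. This should yield, with high probability, an upper bound of the form $\hat\al\le\tilde\al_n$ with $\tilde\al_n$ of order at most $s_n\log n/n$, or $\hat\al$ pinned at the lower boundary in the very sparse regime. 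This step is the main obstacle: unlike in the SAS case, there is no threshold interpretation of the posterior median to anchor the analysis, so one must work directly with the score equation and obtain sharp control of $g_1(x)/g_0(x)$ across all relevant ranges of $|x|$, which is precisely where the specific form of the Cauchy slab and the large $L_0$ play a role.

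\textbf{Step 2 (coordinatewise risk bound).} For any $\al$ in the localization set, decompose
\[
\int \|\te-\te_0\|^2\, d\Pi_\al(\te\given X) = \sum_{i=1}^n \Big\{(1-a_\al(X_i))\, m_2(G_{0,X_i},\te_{0,i}) + a_\al(X_i)\, m_2(G_{1,X_i},\te_{0,i})\Big\}
\]
with $m_2(F,c)=\int (u-c)^2\, dF(u)$. For null coordinates ($\te_{0,i}=0$, at least $n-s_n$ such), the Laplace spike with $\la_0\asymp n$ gives $m_2(G_{0,X_i},0)=O(1/n^2)$ by direct computation, contributing $O(1/n)$ in total; the slab piece is dominated by $E_0[a_\al(X_i)\, m_2(G_{1,X_i},0)]$, which is controlled via $a_\al(X_i)\lesssim \al\, g_1(X_i)/g_0(X_i)$ together with the smallness of $\al\lesssim s_n\log n/n$ from Step 1, giving $O(s_n\log n)$. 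For signal coordinates ($\te_{0,i}\neq 0$, at most $s_n$), the heavy Cauchy tail of $\ga_1$ together with the Gaussian factor $\phi(X_i-\cdot)$ yields $E_{\te_0}[m_2(G_{1,X_i},\te_{0,i})]\lesssim \log n$, producing an $O(s_n\log n)$ total; the spike contribution there is handled via $m_2(G_{0,X_i},\te_{0,i})\lesssim \te_{0,i}^2$ combined with the smallness of $1-a_\al(X_i)$ on large $|X_i|$. When only $s_n\le c_0 n$ is assumed and $\hat\al$ can be pinned at $\mathcal{C}\log n/n$, the additional $O(\log^3 n)$ term arises exactly as in the SAS case of Theorem \ref{thm-risk}.
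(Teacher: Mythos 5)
Your outline tracks the paper's proof at the structural level: establish monotonicity of $\beta=g_1/g_0-1$ on $J_n=[2\la_1,\sqrt{2\log n}]$ (the paper's Proposition~\ref{pr1} and Lemma~\ref{pr2}), concentrate the score via Bernstein to control $\hat\al$ (Lemma~\ref{bernsteinssl}), and then decompose the risk by signal size using fixed--$\al$ moment bounds (Lemmas~\ref{lemfiaL}, \ref{alph}) upgraded to random $\hat\al$ (Lemma~\ref{lemsigL}). Two small imprecisions are worth flagging. First, you frame Step 1 as a two-sided probabilistic localization of $\hat\al$, but the paper uses only a one-sided oversmoothing bound $P(\hat\al>\al_1)$ small (with $\al_1$ defined by $\al_1\tilde m(\al_1)=ds_n/n$); the needed lower bound on $\hat\al$ is not probabilistic at all, it is the deterministic constraint $\hat\al\ge \cC\log n/n$ built into \eqref{defhalL}. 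This makes $P(\hat\al<\al_2')=0$ exactly for $\al_2'=\cC\log n/n$, so the "undersmoothing" half of the argument costs nothing -- and this is precisely why the SSL theorem lands at $s_n\log n$ rather than $s_n\log(n/s_n)$: the paper deliberately avoids the finer signal-dependent undersmoothing analysis of Theorem~\ref{thm-risk} (and says as much in the discussion). Second, for signal coordinates the $\log n$ factor does not come from $E_{\te_0}[m_2(G_{1,X_i},\te_{0,i})]$, which is $O(1)$ since $\phi(X_i-\cdot)$ localizes $G_{1,X_i}$ near $X_i\approx\te_{0,i}$; it comes from the spike piece $(1-a(X_i))\,m_2(G_{0,X_i},\te_{0,i})\asymp(1-a(X_i))\te_{0,i}^2$, controlled via the threshold estimate of Lemma~\ref{alph} and bounded by $\tilde\ta(\al_2')^2\lesssim\log n$. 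Your spike-side description corrects this, but the attribution in the slab sentence is off. Neither issue changes the logic; the route is the paper's route.
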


This result is an SSL version of Theorem \ref{thm-risk}. It shows that a spike and slab LASSO prior with heavy-tailed slab distribution and empirical Bayes choice of the weight parameter leads to a nearly optimal contraction rate for the entire posterior distribution. Hence it provides a theoretical guarantee of a fully data-driven procedure of calibration of the smoothing parameter in SSL priors. 

\subsection{A brief numerical study} \label{sec-sim}

Theorems \ref{negres}--\ref{negres2} imply that the posterior distribution for the spike and slab  prior and Laplace$(1)$ slab does not converge at optimal rate and the discrepancy between the actual rate and the minimax rate for some `bad' $\te_0$s is at least of order
\[ R_n = \frac{\exp\left(\sqrt{2\log(n/s_n)}\right)}{\log(n/s_n)},\]
up to a multiplicative constant factor, as both lower and upper bounds are up to a constant. Note that $R_n$ grows more slowly than a polynomial in $n/s_n$, so the sub-optimality effect will typically be only visible for quite large values of $n/s_n$. For instance, if $n=10^4$ and $s_n=10$, one has $R_n\approx 6$, which is quite small given that an extra multiplicative constant is also involved. 
 
For the present simulation study we took $n=10^7$, $s_n=10$, for which $R_n\approx 13.9$, and the non-zero values of $\te_0$ equal to $\{2\log(n/s_n)\}^{1/2}$, as the lower bound proof of Theorems \ref{negres}--\ref{negres2} suggests. We computed $\hat\al$ using the package {\tt EBayesThresh} of Johnstone and Silverman \cite{js05} and computed $\int \|\te-\te_0\|_2^2d\Pi_{\hat\al}(X)$ using its explicit expression, which can be obtained in closed form for a Laplace slab, with similar computations as in \cite{js05}, Section 6.3. We then took the empirical average over $100$ repetitions to estimate the target expectation $R_2:=E_{\te_0}\int \|\te-\te_0\|_2^2d\Pi_{\hat\al}(X)$. We first took $\ga=\text{Lap}(1)$ a standard Laplace slab and obtained $\hat{R}_2\approx 1110$. For comparison, we computed the empirical quadratic risk $\hat R_{mean}$ for the posterior mean (approximating $E_{\te_0}\|\hat\te^{mean}-\te_0\|^2$) and $\hat R_{median}$ the posterior median of the same posterior, obtaining $\hat R_{mean}\approx 158$ and $\hat R_{median}\approx 167$. So, in this case $\hat R_2$ is already $6$ to $7$ times larger than the risk of either mean or median. 

 To further illustrate the `blow-up' in the rate for the posterior second moment $R_2$, we took a Laplace slab Lap$(a)$ with inverse-scale parameter $a$, for which the numerator in the definition of $R_n$ becomes $\exp\{a\sqrt{2\log(n/s_n)}\}$ (let us also note that the multiplicative constant we refer to above also depends on $a$). The same simulation experiment as above was conducted, with the standard Laplace slab replaced by a Lap$(a)$ slab, for different values of $a$. The numerical results are presented in Table \ref{tableres}, which feature a noticeable increase in the second moment $\hat{R}_2$, while the risks of posterior mean and median stay around the same value, as expected.  
 
\begin{table} [!ht]
\begin{center}
\begin{tabular}{|l|c|c|c|c|c|c|c|}
\hline
$a$ & 0.5 & 1 & 1.5 & 2 & 2.5 & 3 & 3.5   \\
\hline
Second moment & 394  & 1110 &  2847 & 5716 & 8093 &  16530 & 34791  \\
\hline
Median & 173 & 167 & 169 & 174 & 185 & 209 & 219  \\
\hline
Mean    & 157 & 158 & 166 & 172 & 182 & 224 & 336  \\
\hline
\end{tabular}
\end{center}
\caption{Empirical risks $\hat{R}_2, \hat{R}_{med}, \hat{R}_{mean}$ for Laplace slabs Lap$(a)$ and $a\in[0.5,3.5]$} \label{tableres}
\end{table}

We also performed the same experiments for the quasi-Cauchy slab prior introduced in \cite{js04}-\cite{js05} (it is very close to the standard Cauchy slab -- in particular it has the same Cauchy tails -- but more convenient from the numerical perspective, see \cite{js05}, Section 6.4). We found $\hat{R}^{median}\approx 192$, $\hat{R}^{mean}\approx 191$ for the posterior mean and $\hat{R}_2\approx 287$ for the posterior second moment. This time, as expected, the posterior second moment is not far from the two other risks.

\subsection{Modified empirical Bayes estimator} \label{sec-mod}

For $n\ge 3$ and $A \ge 0$, let us set $t_n^2=2\log{n} - 5\log\log{n}$ and $t_A=\sqrt{2(1+A)\log{n}}$. 
For $\Pi_\al$ the SAS prior with a Cauchy slab, let as before $t(\al)$ be the posterior median threshold for fixed $\al$. It is not hard to check that $t(\cdot)$ is continuous and strictly decreasing so has an inverse (see \cite{js04}, Section 5.3).  In a similar fashion as in \cite{js04}, Section 4,  let us introduce a modified empirical Bayes estimator as, for $A\ge 0$ and $\hat t:=t(\hat\al)$, $\al_A:=t^{-1}(t_A)$,
\begin{equation} \label{modal}
\hat\al_A =
\begin{cases}
\hat\al,\qquad &\text{if } \hat t\le t_n,\\
\al_A,\qquad &\text{if } \hat t> t_n.
\end{cases}
\end{equation}

\begin{thm} \label{thm-risk-mod}
Let $\Pi_\al$ be the SAS prior distribution  with slab distribution $G$ equal to the standard Cauchy distribution. For a fixed  $A>0$, let $\Pi_{\hat \al_A}[\cdot\given X]$ be the corresponding plug-in posterior distribution, with $\hat \al_A$ the {\em modified} estimator \eqref{modal}. There exist $C, c_0>0$, $N_0>0$, such that, for any $n\ge N_0$, for any $s_n$ such that $s_n\le c_0 n$,
\[ \sup_{\te_0\in \ell_0[s_n]} E_{\te_0} \int \|\te-\te_0\|^2 d\Pi_{\hat\al}(\te\given X) 
\le Cr_n.\]
\end{thm}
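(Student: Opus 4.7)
\textbf{Plan for the proof of Theorem \ref{thm-risk-mod}.} The idea is to split the posterior risk along the dichotomy defining \eqref{modal} and combine the existing proof of Theorem \ref{thm-risk} with a direct analysis at the deterministic value $\al_A$. With $B_n=\{\hat t\le t_n\}$ and by the definition of $\hat\al_A$,
\[
E_{\te_0}\!\int\|\te-\te_0\|^2 d\Pi_{\hat\al_A}(\te\given X) = E_{\te_0}\!\left[\ind_{B_n}\!\int\|\te-\te_0\|^2 d\Pi_{\hat\al}(\te\given X)\right] + E_{\te_0}\!\left[\ind_{B_n^c}\!\int\|\te-\te_0\|^2 d\Pi_{\al_A}(\te\given X)\right].
\]
On $B_n$ the modified estimator equals $\hat\al$ and satisfies $\hat\al\ge\tilde\al_n:=t^{-1}(t_n)$; on $B_n^c$ it equals the deterministic value $\al_A$.

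For the first term I would revisit the proof of Theorem \ref{thm-risk}. The extra $C\log^3 n$ term there arises from the worst-case contribution when $\hat\al$ is allowed to lie arbitrarily close to the lower boundary $\al_n$ of the search interval. Restricting to $B_n$ forces $\hat\al\ge\tilde\al_n$, which is strictly larger than $\al_n$ by a margin $5\log\log n$ in terms of $t^2$. Feeding this improved lower bound into the technical lemmas of Theorem \ref{thm-risk} -- those providing uniform-in-$\al$ control of the marginal log-likelihood's derivative and of the first and second moments of the posterior weight -- should be enough to absorb the worst-case false-positive contribution and yield a bound of order $r_n$. For the second term, since $\al_A$ is deterministic I bound the integrand directly, pointwise in $X$, by splitting by coordinate,
\[
\int\|\te-\te_0\|^2 d\Pi_{\al_A}(\te\given X) = \sum_{i:\,\te_{0,i}=0} a_{\al_A}(X_i)\!\int\!\te^2\ga_{X_i}(d\te) + \sum_{i:\,\te_{0,i}\neq 0}\!\int\!(\te-\te_{0,i})^2 d\Pi_{\al_A}(\te_i\given X_i).
\]
By construction $t(\al_A)=t_A=\sqrt{2(1+A)\log n}$, and for the Cauchy slab one has $\al_A\asymp n^{-(1+A)}$ up to logarithmic factors. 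Taking $E_{\te_0}$, the zero-coordinate contribution is at most $n\cdot O(\al_A\log n)=O(n^{-A}\log n)=o(1)$, and the nonzero-coordinate contribution is at most $s_n\cdot O(\log n)=O(r_n)$, using the same per-coordinate second-moment bounds as in Theorem \ref{thm-risk}; the indicator $\ind_{B_n^c}$ can only make this smaller.

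The main obstacle is the first step: locating precisely where the $\log^3 n$ remainder enters the proof of Theorem \ref{thm-risk} and verifying that the gap $5\log\log n$ between $t_n^2$ and $2\log n$ suffices to eliminate it. This is the second-moment analogue of the argument used by Johnstone and Silverman in their Theorem 2 for the posterior median, but, as the suboptimality phenomenon of Theorems \ref{negres}--\ref{negres2} shows, bounds valid for the posterior mean or median need not transfer to the full posterior; so the adaptation has to be carried out term by term in the technical lemmas of Section \ref{sec-tec}, keeping track of the dependence on the effective lower bound $\tilde\al_n$ for $\hat\al$ on $B_n$.
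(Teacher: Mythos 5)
Your decomposition into the events $B_n=\{\hat t\le t_n\}$ and $B_n^c$ matches what the paper does for the regime $s_n\ge\log^2 n$, but both halves of your plan have genuine gaps.

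\textbf{The second term.} You claim that the nonzero-coordinate contribution is at most $s_n\cdot O(\log n)=O(r_n)$ and that the indicator $\ind_{B_n^c}$ ``can only make this smaller.'' The implication $s_n\log n=O(r_n)$ is false in general: $r_n=2s_n\log(n/s_n)$, so when $s_n\asymp n$ (which is allowed under $s_n\le c_0 n$) the ratio $s_n\log n/r_n$ is of order $\log n$. Dropping the indicator therefore loses the correct form of the logarithmic factor and the bound does not follow. The indicator is essential in this regime: when $s_n$ is large, the event $\{\hat t>t_n\}$ is rare, and the paper exploits this via a Cauchy--Schwarz step (Lemma \ref{lem-random-mod}) to get $\tilde S\le Cs_n t_A^2 P(\hat t>t_n)^{1/2}$, after which the Johnstone--Silverman bounds for $P(\hat t>t_n)$ finish the job.

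\textbf{The first term.} Your plan to ``feed the improved lower bound $\hat\al\ge\tilde\al_n$ into the technical lemmas'' is misdirected, both in detail and in intuition. First, the $\log^3 n$ loss of Theorem~\ref{thm-risk} occurs only in the very sparse regime $s_n\lesssim\log^2 n$, and there the paper does \emph{not} salvage the first term by restricting to $B_n$: on $B_n$ one has $\hat t\le t_n<\sqrt{2\log n}$, i.e.\ a \emph{lower} (less aggressive) threshold, which worsens the false-positive contribution, not improves it; the ``improvement'' you ask for runs in the wrong direction. Second, the way the paper actually treats the very sparse regime is quite different: it never splits on $B_n$/$B_n^c$ there, but bounds $E_{\te_0}\int\|\te-\te_0\|^2 d\Pi_{\hat\al_A}$ directly via Lemmas~\ref{lemns} and~\ref{lemsig} with $\al=\al_A$, using two specific mechanisms that your proposal misses: (i) the modified estimator satisfies $t(\hat\al_A)\le t_A$ with probability one, so $P[\hat\al_A<\al_A]=0$ and the last term in Lemma~\ref{lemsig} vanishes; and (ii) the deviation bound $P[\hat\zeta<\zeta_1]\le e^{-C\log^2 n}$ from \eqref{dev-zet-mod} is obtained by replacing $\eta_n=s_n/n$ by $\tilde\eta_n=\max(\eta_n,\log^2 n/n)$ in the definition of $\zeta_1$, which keeps the Bernstein tail bound useful even for $s_n=1$. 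For $s_n\ge\log^2 n$, the first term is instead bounded trivially by dropping the indicator and invoking Theorem~\ref{thm-risk}, which already gives $Cr_n$ in that range; there is no need to refine the technical lemmas there.

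In short, the two concrete missing ingredients are the use of $P(\hat t>t_n)^{1/2}$ via Cauchy--Schwarz on the $B_n^c$ side (Lemma~\ref{lem-random-mod}), and the observation that the modified estimator enforces $P[\hat\al_A<\al_A]=0$ deterministically, which is what makes the very-sparse regime tractable.
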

Theorem \ref{thm-risk-mod} shows that the plug-in SAS posterior distribution  using the modified estimator \eqref{modal}, $A>0$, and a Cauchy slab  attains the minimax rate of convergence $r_n$ even in the very sparse regime $s_n\leqa \log^2{n}$, for which the unmodified estimate of  Theorem \ref{thm-risk} 
may lose a logarithmic factor.  

\subsection{Discussion}

In this paper, we have developped a theory of empirical Bayes choice of the hyperparameter of spike and slab prior distributions. It extends the work of Johnstone and Silverman \cite{js04} in that here the complete EB posterior distribution is considered. One important message is that such a generalisation preserves optimal convergence rates at the condition of taking slab distributions with heavy enough tails. If the tails of the slab are only moderate (e.g. Laplace), then the complete EB posterior rate may be suboptimal. This is in contrast with the hierarchical case considered in \cite{cv12}, where a Laplace slab combined with a Beta distributed prior on $\al$ was shown to lead to an optimal posterior rate. On the one hand, the empirical Bayes method often leads to simpler or/and more easily tractable practical algorithms; on the other hand, we have illustrated here that the complete EB posterior may in some cases need slightly stronger conditions to conserve optimal theoretical guarantees. This phenomenon had not been pointed out so far in the literature, to the best of our knowledge. 
 
We also note that Theorem \ref{thm-risk} (or Theorem \ref{thm-risk-mod} if one allows for very sparse signals) enables one to recover the optimal form of the logarithmic factor $\log(n/s_n)$ in the minimax rate. This entails significant work, as one needs to control the empirical Bayes weight estimate $\hat \al$ both from above {\em and below}. 
This could work too in the  SSL setting of Theorem \ref{thm-lasso}, although this seems to need substantial extra technical work.


Looking at Theorems \ref{negres} and \ref{negres2}, it is natural to wonder why the Empirical Bayes approach fails for the Laplace slab where the full Bayes approach succeeds as seen in \cite{cv12} Theorem 2.2. The reason why the hierarchical Bayes version works also for $\ga$ Laplace is the extra penalty in model size induced by the hierarchical prior on dimension.  Indeed, in the full Bayes approach, the posterior distribution of $\al$ given $X$ has density 
\[ f_{\al\given X}(\al) \propto p(X\given \al)\pi(\al), \]
where $p(X\given \al)$ is the marginal density one maximises when considering the MMLE $\hat \al$. Hence adding a term $\log\pi(\al)$ for well-chosen $\pi$ -- for instance that arising from a $\text{Beta}(1,n+1)$ prior on $\alpha$ as considered in \cite{cv12} -- to the log-marginal likelihood one maximises  forces $\hat\al$ to concentrate on smaller values. For instance, in the present setting, one could consider a penalised log-marginal maximum likelihood, which would force the estimate $\hat \al$ to concentrate on slightly smaller values, which would allow one to avoid the extra  $e^{\sqrt{\log{n/s_n}}}$ term arising in Theorems \ref{negres}--\ref{negres2}.

The present work can also serve as a basis for constructing confidence regions using spike-and-slab posterior distributions. This question is considered in the forthcoming paper \cite{cs17}.


\section{Proofs for the spike and slab prior } \label{sec-pr}

Let us briefly outline the ingredients of the proofs to follow. For Theorems \ref{negres} and \ref{thm-risk}, our goal is to bound the expected posterior risk $R_n(\te_0)=E_{\te_0} \int \|\te-\te_0\|^2 d\Pi_{\hat\al}(\te\given X)$. 
There are three main tools. First,   after introducing  notation and basic bounds in Section \ref{sec-nota}, bounds on the posterior risk for fixed $\al$ are given in Section \ref{sec-fab}, as well as corresponding bounds for random $\al$. Let us note that the corresponding upper bounds are different from those obtained on the quadratic risk for the posterior median in \cite{js04} (and in fact, must be, in view of the negative result in Theorem \ref{negres}). Second, inequalities on moments of the score function are stated in Section \ref{sec-mom}. As a third tool, we obtain deviation inequalities on the location of $\hat \al$ in Section \ref{sec-hal}. One of the bounds sharpens the corresponding bound from \cite{js04} in case the signal belongs to the nearly-black class $\ell_0[s_n]$ which we assume here. 

Proofs of Theorems \ref{negres} and \ref{thm-risk} are given in Sections \ref{prn} and \ref{prrisk}. For Theorem \ref{thm-risk}, we also needed to slightly complete the proof of one of the inequalities on thresholds stated in \cite{js04}, see Lemma \ref{lem-lb3}. The proof of Theorem \ref{negres2}, which uses ideas from both previous proofs, is given in Section \ref{prn2}. Proofs of technical lemmas for the SAS prior are given in Section \ref{sec-tec}.

\subsection{Notation and tools for the SAS prior}\label{sec-nota}

{\em Expected posterior $L^2$--squared risk.} For a fixed weight $\al$, the posterior distribution of $\te$ is given by  \eqref{post}. On each coordinate, the mixing weight $a(X_i)$ is given by \eqref{postwei} and the density of the non-zero component $\ga_{X_i}$  by \eqref{postdc}. 
In the sequel we will obtain bounds on the following quantity, already for 
a given $\alpha\in[0,1]$, 
\[ \int \|\te-\te_0\|^2 d\Pi_\al(\te\given X) = \sum_{i=1}^n
\int (\te_i - \te_{0,i})^2 d\Pi_\al(\te_i\given X_i). \]
To do so, we study $r_2(\al,\mu,x):= \int (u - \mu)^2d\pi_\al(u\given x)$, where 
$\pi_\al(\cdot\given x) \sim (1-a(x))\delta_0 + a(x) \ga_x(\cdot).$
By definition
\begin{align*}
r_2(\al,\mu,x) = (1-a(x))\mu^2 + a(x)\int (u-\mu)^2\ga_x(u)du.
\end{align*}
This quantity is controlled by $a(x)$ and the term involving $\ga_x$. From the definition of $a(x)$, bounding the denominator from below by one of its two components, and using $a(x)\le 1$ yields, for any real $x$ and $\al\in [0,1]$,
\begin{align}
\al \frac{g}{g\vee\phi}(x)\le a(x)  \le 1 \wedge \frac{\al}{1-\al} \frac{g}{\phi}(x).
\label{sima}
\end{align}

{\em The marginal likelihood in $\al$.} By definition, the empirical Bayes estimate $\hat\al$ in \eqref{defhal} maximises the logarithm of the marginal likelihood in $\al$ in \eqref{mli}. In case the maximum is not taken at the boundary, $\hat\al$ is a zero of the derivative (score) of the previous likelihood. 
Its expression is $S(\al)=\sum_{i=1}^n \beta(X_i,\al)$, where following \cite{js04} we set, for $0\le\al\le 1$ and any real $x$,
\[ \beta(x,\al)=\frac{\be(x)}{1+\al\be(x)},\qquad  \beta(x) = \frac{g}{\phi}(x)-1. \]

The study of $\hat \al$ below uses in a crucial way the first two moments of $\beta(X_i,\al)$, so we introduce the corresponding notation next.  
Let $E_\ta$, for $\ta\in\RR^n$, denote the expectation under $\te_0=\ta$. Define
\begin{equation} \label{mtilde}
\tilde m(\al) = - E_0\beta(X,\al)
\end{equation}
and further denote 
\begin{align*} 
m_1(\ta,\al) & = E_\ta[\be(X,\al)] = \int_{-\infty}^{\infty} \be(t,\al)\phi(t-\tau)dt. \\
m_2(\ta,\al) & = E_\ta[\be(X,\al)^2]. 
\end{align*}

{\em The thresholds $\zeta(\al)$, $\tilde\ta(\al)$ and $t(\al)$.} Following \cite{js04}, we introduce several useful thresholds. From Lemma 1 in \cite{js04}, we know that $g/\phi$, and therefore $\be=g/\phi-1$, is a strictly increasing function on $\RR^+$. It is also continuous, so given $\al$, a pseudo-threshold $\zeta=\zeta(\al)$ can be defined by 
\begin{equation} \label{zeta}
 \beta(\zeta) = \frac1\al.
\end{equation}
Further one can also define $\tau(\al)$ as the solution in $x$ of 
\[ \Omega(x,\al):=\frac{a(x)}{1-a(x)}=\frac{\al}{1-\al}\frac{g}{\phi}(x)=1. \]
Equivalently, $a(\ta(\al))=1/2$. Also, $\be(\ta(\al))=\al^{-1}-2$ so $\ta(\al)\le \zeta(\al)$. 
Define $\al_0$ as $\ta(\al_0)=1$ and set 
\begin{equation} \label{tauti}                    
 \tilde\ta(\al) = \ta(\al\wedge \al_0).         
\end{equation}                                    
Recall from Section \ref{sec-main} that $t(\al)$ is the threshold associated to the posterior median for given $\al$. It is shown in  \cite{js04}, Lemma 3, that $t(\al)\le \zeta(\al)$. Finally, the following bound in terms of $\ta(\al)$, see \cite{js04} p. 1623, is also useful for large $x$,
\begin{equation}\label{postwb}
 1-a(x) \le 1\1_{|x|\le \tilde\ta(\al)}
 +   e^{-\frac12 (|x|-\tilde\ta(\al))^2}\1_{|x| > \tilde\ta(\al)}.
\end{equation}

\subsection{Posterior risk bounds} \label{sec-fab}

Recall the notation $r_2(\al,\mu,x)=\int (u-\mu)^2 d\Pi_\al(u)$. 

\begin{lem} \label{lemfia}
Let $\ga$ be the Cauchy or Laplace density. For any $x$ and $\al\in[0,1/2]$,
\begin{align*}  
 r_2(\al,0,x) & \leq C\big[1 \wedge \frac{\al}{1-\al} \frac{g}{\phi}(x)\big] (1+x^2) \\
 r_2(\al,\mu,x) & \le (1-a(x))\mu^2 + Ca(x)((x-\mu)^2+1).
\end{align*} 
Let $\ga$ be the Cauchy density. For any real $x$ and $\al\in[0,1/2]$,
\begin{align*}
E_{0} r_2(\al,0,x) & \leq C\tau(\al)\al\\
 E_\mu r_2(\al,\mu,x) & \le C(1+\tilde\ta(\al)^2). 
\end{align*}
\end{lem}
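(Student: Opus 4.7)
I would work inequality by inequality, systematically reducing each statement to two \emph{core estimates}: a uniform control of the posterior second central moment
$\int (u-x)^2 \ga_x(u)\,du \le C$
for $x\in\RR$ (valid for both the Cauchy and Laplace slab), and, for the Cauchy slab only, the pointwise tail bound $g(x)\le C/(1+x^2)$. The first follows, for the Cauchy, from the identity $\int(u-x)^2 \ga_x(u)\,du = 1 + g''(x)/g(x)$ obtained by differentiating $g(x)=\int\phi(x-u)\ga(u)du$ twice in $x$, combined with the classical asymptotics $g(x),\, g''(x)\asymp (1+x^2)^{-1}$ of the Cauchy--Gaussian convolution; for the Laplace case I would complete the square in $-(x-u)^2/2-|u|$ to recognize $\ga_x$ as a re-centered Gaussian restricted to $\{u\ge 0\}$ or $\{u<0\}$, for which the central second moment is explicitly bounded.

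\textbf{First two inequalities.} Starting from $r_2(\al,\mu,x)=(1-a(x))\mu^2+a(x)\int(u-\mu)^2\ga_x(u)du$, I would use $(u-\mu)^2\le 2(u-x)^2+2(x-\mu)^2$ and the uniform bound on $\int(u-x)^2\ga_x(u)du$ to get directly the second inequality. For the first, specialize to $\mu=0$, write $u^2\le 2(u-x)^2+2x^2$, pull out the factor $1\wedge\frac{\al}{1-\al}\frac{g}{\phi}(x)$ from $a(x)$ via \eqref{sima}, and the claim follows.

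\textbf{Third inequality.} Under $\te_0=0$, $X\sim\cN(0,1)$. Split the expectation at $|X|=\ta(\al)$. On $\{|X|\le\ta(\al)\}$ use $a(X)\le 2\al\,g(X)/\phi(X)$ from \eqref{sima}, change measure to $g(x)dx$, and apply the Cauchy tail bound $g(x)\le C/(1+x^2)$ to obtain
$$2\al\int_{-\ta(\al)}^{\ta(\al)} g(x)(1+x^2)\,dx \le C\al\ta(\al).$$
On $\{|X|>\ta(\al)\}$ use $a(X)\le 1$ and a standard Gaussian-tail estimate $\int_{\ta(\al)}^\infty (1+x^2)\phi(x)dx \lesssim \ta(\al)\phi(\ta(\al))$; by the defining relation $\al\beta(\ta(\al))=1$ of $\ta(\al)$, one has $\phi(\ta(\al))\le C\al g(\ta(\al))\le C'\al/(1+\ta(\al)^2)$, so this tail contribution is of smaller order than $\al\ta(\al)$.

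\textbf{Fourth inequality.} Bound $r_2(\al,\mu,X)$ above by $(1-a(X))\mu^2+Ca(X)\bigl((X-\mu)^2+1\bigr)$ using the second inequality. For the second term, $E_\mu a(X)((X-\mu)^2+1)\le E_\mu[(X-\mu)^2+1]=2$. For $(1-a(X))\mu^2$, invoke \eqref{postwb}. If $|\mu|\le 2\tilde\ta(\al)$, then $\mu^2\le 4\tilde\ta(\al)^2$ trivially. If $|\mu|>2\tilde\ta(\al)$, then $P_\mu(|X|\le\tilde\ta(\al))$ is Gaussian-small ($\le e^{-(|\mu|-\tilde\ta(\al))^2/2}$), so $\mu^2 P_\mu(|X|\le\tilde\ta(\al))$ is uniformly bounded, and the tail part $E_\mu\bigl[\mu^2 e^{-(|X|-\tilde\ta(\al))^2/2}\1_{|X|>\tilde\ta(\al)}\bigr]$ reduces after the change of variables $y=X-\mu$ to $\mu^2$ times a convergent Gaussian-type integral, also uniformly bounded. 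Collecting yields $E_\mu r_2(\al,\mu,X)\le C(1+\tilde\ta(\al)^2)$.

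\textbf{Main obstacle.} The main technical step is the uniform estimate $\int(u-x)^2\ga_x(u)du\le C$ (equivalently, boundedness of $g''/g$) for the Cauchy slab at all $x\in\RR$: near $x=0$ the slab's mode interacts with $\phi(x-u)$ and for large $|x|$ the slow polynomial decay of $\ga$ competes with the Gaussian peak at $u=x$, so the bound is not immediate from either asymptotic regime and needs care. The sharp constants in the two Cauchy-specific pointwise bounds ($g\leqa 1/(1+x^2)$ and $g''/g$ bounded) are what ultimately drive the correct dependence on $\ta(\al)$ (resp.\ $\tilde\ta(\al)$) in the last two inequalities.
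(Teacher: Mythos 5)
Your proposal is correct and follows essentially the same route as the paper: expand $\int(u-\mu)^2\ga_x(u)\,du$ in terms of $g'/g$ and $g''/g$, use boundedness of these ratios, then split at $\ta(\al)$ for the zero-signal bound in expectation and split into near/far cases of $|\mu|$ versus $\tilde\ta(\al)$ using \eqref{postwb} for the last bound. Two small remarks. First, what you flag as the main obstacle is handled very directly in the paper: for both the Cauchy and the Laplace density one has the pointwise inequalities $|\ga'|\le c_1\ga$ and $|\ga''|\le c_2\ga$ (for the Laplace $\ga''=\ga$ away from the origin, and the distributional part of $\ga''$ gives $g''=g-\phi$), hence $|g^{(k)}(x)|=|\int \ga^{(k)}(x-u)\phi(u)\,du|\le c_k g(x)$ with no asymptotics of the convolution needed. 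Second, the relation $\al\be(\ta(\al))=1$ that you cite is the defining relation of $\zeta(\al)$, not of $\ta(\al)$, which solves $\tfrac{\al}{1-\al}\tfrac{g}{\phi}(\ta)=1$ (equivalently $a(\ta(\al))=1/2$); the bound you deduce from it, $\phi(\ta(\al))\leqa\al\,g(\ta(\al))$, does nevertheless follow from the correct definition, so this slip is harmless.
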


The following lower bound is used in the proof of Theorem \ref{negres}. 
\begin{lem} \label{lemlb}
Let $\ga$ be the Laplace density. There exists $C_0>0$ such that, for $x\in \RR$ and $\al\in[0,1]$\[  r_2(\al,0,x) \ge  C_0 \al. \]
\end{lem}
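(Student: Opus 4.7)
The plan is to work from the definitions, reduce the question to a lower bound on a simpler quantity, and then use continuity plus explicit asymptotics specific to the Laplace density.

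First I would unpack the right-hand side. Since $\mu=0$, the formula for $r_2(\al,\mu,x)$ recalled just before Lemma~\ref{lemfia} gives
\[ r_2(\al,0,x) = a(x)\int u^2 \ga_x(u)\, du = a(x)\cdot\frac{\int u^2\phi(x-u)\ga(u)\,du}{g(x)}. \]
Next, combining with the lower bound $a(x)\ge \al\, g(x)/(g(x)\vee\phi(x))$ from \eqref{sima} (which holds for every $\al\in[0,1]$, since $(1-\al)\phi+\al g\le g\vee\phi$), the factor $g(x)$ cancels and we are reduced to proving that
\[ F(x) \,:=\, \frac{\int u^2\phi(x-u)\ga(u)\,du}{g(x)\vee\phi(x)} \,\ge\, C_0 \qquad \text{for all } x\in\RR. \]
Then $r_2(\al,0,x)\ge \al F(x) \ge C_0\al$.

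The function $F$ is continuous on $\RR$ and strictly positive everywhere, because the numerator is a strictly positive convolution of two strictly positive densities. Hence on any compact set $F$ attains a positive minimum, and it only remains to control its behaviour at infinity: it suffices to show $\liminf_{|x|\to\infty} F(x) > 0$. I would do this by computing explicit asymptotics specific to the Laplace slab $\ga(u)=\tfrac12 e^{-|u|}$. By symmetry $F(-x)=F(x)$, so I may take $x\to+\infty$. Completing the square in the Gaussian-times-exponential integrand, one finds that the main contribution to both $g(x)$ and the numerator comes from the half-line $u>0$, where $\phi(x-u)e^{-u}=e^{-x+1/2}\phi(u-(x-1))$. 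This gives
\[ g(x) \sim \tfrac{1}{2}e^{-x+1/2}\Phi(x-1) \sim \tfrac{e^{1/2}}{2}\,e^{-x}, \]
\[ \int u^2 \phi(x-u)\ga(u)\,du \sim \tfrac{e^{1/2}}{2}\,e^{-x}\bigl((x-1)^2+1\bigr), \]
the contributions from $u<0$ being of order $e^{-(x+1)^2/2}$ times polynomial, hence negligible compared with $e^{-x}$. Moreover $g(x)$ dominates $\phi(x)$ for large $x$ since the Laplace has exponential tails heavier than the Gaussian, so $g(x)\vee\phi(x)=g(x)$ eventually. Therefore $F(x)\sim(x-1)^2+1\to\infty$, which together with continuity and positivity on compacts yields the desired uniform lower bound $C_0>0$.

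The only delicate point is the asymptotic analysis of the Laplace convolutions: one needs to verify rigorously that the tail contribution from $u<0$ (and the restriction $u>0$ to a half-Gaussian on the positive part) produces only multiplicative corrections bounded above and below, so that the ratio $F(x)$ genuinely tends to infinity and is not accidentally degraded by lower order terms for moderate $x$. This is the main technical step, but it is a routine Laplace-method/completing-the-square computation; no finer control of cancellations is needed since we only need a lower bound by a constant.
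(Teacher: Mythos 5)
Your proposal is correct and follows essentially the same route as the paper's proof: use \eqref{sima} to lower bound $a(x)$, reduce to showing that the $\alpha$-free factor is bounded below by a positive constant, and conclude by continuity, strict positivity, and a positive $\liminf$ at infinity. The only cosmetic difference is that the paper bounds the two factors $g/(\phi\vee g)$ and $\int u^2\gamma_x(u)\,du$ separately (citing the inequality $\int u^2\gamma_x(u)\,du\ge 1-c_2-2c_1|x|+x^2$ established inside the proof of Lemma~\ref{lemfia} to get the $+\infty$ limit), whereas you bound the combined ratio $F(x)$ directly via an explicit completing-the-square computation for the Laplace convolution; both give the same conclusion, and your asymptotics $g(x)\sim\tfrac12 e^{1/2}e^{-x}$ and $F(x)\sim (x-1)^2+1$ are correct.
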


We now turn to bounding $r_2(\hat\al,\mu,x)$. This is the quantity $r_2(\al,\mu,x)$, where $\al$ (which comes in via $a(x)=a_{\al}(x)$) is replaced by $\hat\al$. 
This is done with the help of the threshold $\tilde\ta(\al)$.

\begin{lem}[no signal or small signal]\label{lemns}
Let $\ga$ be the Cauchy density.  Let $\al$ be a fixed non-random element of $(0,1)$. Let $\hat\al$ be a random element of $[0,1]$ that may depend on $x\sim \cN(0,1)$ and on other data. Then there exists $C_1>0$ such that
\[ E r_2(\hat\al,0,x) \le C_1\left[\al\tilde\ta(\al) +  P(\hat\al>\al)^{1/2}\right]. \]
There exists $C_2>0$ such that for any real $\mu$,  if $x\sim \cN(\mu,1)$,
\[ E r_2(\hat\al,\mu,x) \le \mu^2 + C_2.\] 
\end{lem}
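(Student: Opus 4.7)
The plan rests on a monotonicity observation. A direct computation gives $\partial_\al a_\al(x) = g(x)\phi(x)/[(1-\al)\phi(x) + \al g(x)]^2 > 0$, so $a_\al(x)$ is strictly increasing in $\al$. Since $r_2(\al,0,x) = a_\al(x)\int u^2 \ga_x(u)\,du$ and $\ga_x$ does not depend on $\al$, the map $\al \mapsto r_2(\al, 0, x)$ is increasing as well. In particular, on the event $\{\hat\al \le \al\}$ one has $r_2(\hat\al, 0, x) \le r_2(\al, 0, x)$ pointwise in all remaining randomness.

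For the first bound, I would decompose
\[ E r_2(\hat\al,0,x) = E\bigl[r_2(\hat\al,0,x)\,\1_{\hat\al \le \al}\bigr] + E\bigl[r_2(\hat\al,0,x)\,\1_{\hat\al > \al}\bigr]. \]
The monotonicity dominates the first term by $E_0 r_2(\al, 0, x)$, which by Lemma \ref{lemfia} is at most $C\ta(\al)\al \le C\tilde\ta(\al)\al$, using that $\ta$ is decreasing and $\tilde\ta(\al) = \ta(\al\wedge\al_0) \ge \ta(\al)$. For the second term, the second inequality of Lemma \ref{lemfia} taken at $\mu = 0$ yields the uniform pointwise bound $r_2(\hat\al,0,x) \le C a_{\hat\al}(x)(1 + x^2) \le C(1 + x^2)$, independent of $\hat\al$. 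Cauchy--Schwarz then gives
\[ E\bigl[r_2(\hat\al,0,x)\,\1_{\hat\al > \al}\bigr] \le C\bigl[E(1+x^2)^2\bigr]^{1/2}\,P(\hat\al > \al)^{1/2}, \]
with $E(1+x^2)^2$ a finite absolute constant under $x \sim \cN(0,1)$. Summing the two contributions delivers the desired inequality.

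For the second assertion, Lemma \ref{lemfia} directly provides, uniformly in $\hat\al \in [0,1]$,
\[ r_2(\hat\al,\mu,x) \le (1-a_{\hat\al}(x))\mu^2 + C a_{\hat\al}(x)\bigl((x-\mu)^2 + 1\bigr) \le \mu^2 + C\bigl((x-\mu)^2 + 1\bigr). \]
Taking expectation under $x \sim \cN(\mu,1)$ and using $E(x-\mu)^2 = 1$ yields $E r_2(\hat\al,\mu,x) \le \mu^2 + 2C$, as claimed. The main care needed is the bookkeeping between $\ta(\al)$ and $\tilde\ta(\al)$; beyond that, the argument is a straightforward combination of Lemma \ref{lemfia}, the monotonicity observation, and Cauchy--Schwarz, so I do not expect any serious obstacle.
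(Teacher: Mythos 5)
Your proof is correct and essentially the same as the paper's: both decompose over $\{\hat\al \le \al\}$ and $\{\hat\al > \al\}$, use the fixed-$\al$ expectation bound from Lemma~\ref{lemfia} together with a monotonicity-in-$\al$ observation on the first event, and apply Cauchy--Schwarz with the pointwise bound $r_2(\hat\al,0,x)\lesssim 1+x^2$ on the second. The only cosmetic difference is that you note directly that $\al\mapsto r_2(\al,0,x)$ is increasing via $\partial_\al a_\al>0$, whereas the paper first applies the bound $a(x)\le 1\wedge\frac{\al}{1-\al}\frac{g}{\phi}(x)$ and then uses monotonicity of $\al/(1-\al)$ — an equivalent observation.
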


\begin{lem}[signal] \label{lemsig}
Let $\ga$ be the Cauchy density. Let $\al$ be a fixed non-random element of $(0,1)$. Let $\hat\al$ be a random element of $[0,1]$ that may depend on $x\sim \cN(\mu,1)$ and on other data and such that $\tilde\tau(\hat\al)^2\le d\log(n)$ with probability $1$ for some $d>0$. Then there exists $C_2>0$ such that for all real $\mu$,
\[ E r_2(\hat\al,\mu,x) \le C_2\left[1 + \tilde\ta(\al)^2 +  (1+ d\log{n})P(\hat\al<\al)^{1/2}\right]. \]
\end{lem}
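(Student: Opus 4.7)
\medskip

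\noindent\textbf{Proof plan for Lemma \ref{lemsig}.} The plan is to split the expectation according to whether $\hat\al \ge \al$ or $\hat\al < \al$, so that the first event lets us compare with the deterministic bound of Lemma \ref{lemfia}, while the second event is controlled via Cauchy--Schwarz together with a pointwise bound exploiting the assumption $\tilde\ta(\hat\al)^2 \le d\log{n}$.

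For the event $\{\hat\al \ge \al\}$, I would first observe that the posterior weight $a_\al(x) = \al g(x)/((1-\al)\phi(x)+\al g(x))$ is strictly increasing in $\al$, as a direct differentiation gives $\partial_\al a_\al(x) = g(x)\phi(x)/[(1-\al)\phi(x)+\al g(x)]^2 >0$. Therefore $1 - a_{\hat\al}(x) \le 1 - a_\al(x)$ on this event. Combining with the pointwise bound from Lemma \ref{lemfia}, and using $a_{\hat\al}(x)\le 1$, gives
\[
r_2(\hat\al,\mu,x)\,\1_{\hat\al\ge\al} \,\le\, (1-a_\al(x))\mu^2 + C((x-\mu)^2+1).
\]
Taking $E_\mu$ on both sides, the second term contributes a constant since $x\sim\cN(\mu,1)$, and for the first term one uses that $(1-a_\al(x))\mu^2 \le r_2(\al,\mu,x)$ by the definition of $r_2$, whence the bound from the second part of Lemma \ref{lemfia} yields $E[r_2(\hat\al,\mu,x)\1_{\hat\al\ge\al}] \le C(1+\tilde\ta(\al)^2)$.

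For the event $\{\hat\al < \al\}$, I would apply Cauchy--Schwarz to obtain
\[
E[r_2(\hat\al,\mu,x)\1_{\hat\al<\al}] \le \bigl(E\,r_2(\hat\al,\mu,x)^2\bigr)^{1/2}\, P(\hat\al<\al)^{1/2}.
\]
The key estimate is then a pointwise bound $r_2(\hat\al,\mu,x) \le C(1+d\log{n}+(x-\mu)^2)$, valid almost surely under the assumption $\tilde\ta(\hat\al)^2\le d\log{n}$. To derive it, I would use Lemma \ref{lemfia} and the elementary inequality $(1-a_{\hat\al}(x))\mu^2 \le 2(x-\mu)^2 + 2(1-a_{\hat\al}(x))x^2$, so that the only nontrivial task is to bound $(1-a_{\hat\al}(x))x^2$. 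For this I invoke \eqref{postwb}: either $|x|\le\tilde\ta(\hat\al)$, in which case $x^2\le\tilde\ta(\hat\al)^2\le d\log n$, or $|x|>\tilde\ta(\hat\al)$, in which case setting $y=|x|-\tilde\ta(\hat\al)$ gives $x^2 e^{-y^2/2}\le (2y^2+2\tilde\ta(\hat\al)^2)e^{-y^2/2}\le 4/e + 2d\log n$, since $y^2 e^{-y^2/2}$ is globally bounded.

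The last step is routine: squaring the pointwise bound and taking expectation under $x\sim \cN(\mu,1)$, the term $(x-\mu)^4$ contributes a constant, hence $\bigl(E\,r_2(\hat\al,\mu,x)^2\bigr)^{1/2} \le C(1+d\log n)$, and adding the two contributions yields the claimed bound. The main obstacle is the pointwise control of $(1-a_{\hat\al}(x))x^2$ in the region $|x|>\tilde\ta(\hat\al)$, where the Gaussian decay coming from \eqref{postwb} must absorb the quadratic growth in $x$; this is what makes the assumption $\tilde\ta(\hat\al)^2\le d\log n$ crucial, as it prevents the bound from accumulating extra logarithmic factors.
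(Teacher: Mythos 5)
Your proof is correct and reaches the claimed bound, but it departs from the paper's argument in two respects. On the event $\{\hat\al\ge\al\}$ you exploit the monotonicity of $\al\mapsto a_\al(x)$ directly (verified by differentiation), obtaining $1-a_{\hat\al}(x)\le 1-a_\al(x)$, whereas the paper passes through the threshold inequality \eqref{postwb} and the monotonicity of $\al\mapsto\tilde\ta(\al)$ to replace $\tilde\ta(\hat\al)$ by $\tilde\ta(\al)$ in the bracket; both routes then reduce to the fixed-$\al$ bound of Lemma~\ref{lemfia}, and yours is arguably the more elementary observation. More substantively, on $\{\hat\al<\al\}$ you establish a uniform-in-$\mu$ \emph{pointwise} bound $r_2(\hat\al,\mu,x)\le C(1+d\log n+(x-\mu)^2)$ almost surely, obtained from the decomposition $(1-a_{\hat\al}(x))\mu^2\le 2(x-\mu)^2+2(1-a_{\hat\al}(x))x^2$ together with \eqref{postwb} and the elementary bound $y^2e^{-y^2/2}\le 2/e$; Cauchy--Schwarz then gives $(E\,r_2(\hat\al,\mu,x)^2)^{1/2}\le C(1+d\log n)$ in one step. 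The paper instead isolates the term $\mu^2\big[\1_{|x|\le\tilde\ta(\hat\al)}+e^{-\frac12(|x|-\tilde\ta(\hat\al))^2}\1_{|x|>\tilde\ta(\hat\al)}\big]\1_{\hat\al<\al}$, replaces $\tilde\ta(\hat\al)$ by $b_n=\sqrt{d\log n}$, applies Cauchy--Schwarz, and then bounds $E_\mu\big[\mu^4\1_{|x|\le b_n}+\mu^4e^{-(|x|-b_n)^2}\big]\le C(1+b_n^4)$ by re-running the case split $|\mu|\le 4b_n$ versus $|\mu|>4b_n$ from the proof of Lemma~\ref{lemfia}. Your route avoids this second split on $|\mu|$ and is somewhat cleaner; both produce the same final estimate.
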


\subsection{Moments of the score function} \label{sec-mom}

The next three lemmas are borrowed from \cite{js04} and apply to any density $\ga$ such that $\log\ga$ is Lipschitz on $\RR$ and satisfies
\begin{align} 
\ga(y)^{-1}\int_y^\infty\ga(u)du & \approx y^{\kappa-1},\quad\text{as } y\to\infty. \label{tails}
\end{align}
Both Cauchy and Laplace densities satisfy \eqref{tails}, with $\kappa=2$ and $\kappa=1$ respectively, and their logarithm is Lipschitz.

\begin{lem} \label{lemmtilde}
For $\kappa\in[1,2]$ as in  \eqref{tails}, as $\al\to 0$,
\[ \tilde m (\al) \asymp \zeta^{\kappa-1}g(\zeta). \]
Also, the function $\al\to\tilde{m}(\al)$ is nonnegative and increasing in $\al$.
\end{lem}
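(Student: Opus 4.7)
My plan is to prove the two claims of Lemma \ref{lemmtilde} separately, starting with the easier monotonicity/nonnegativity and then handling the asymptotic $\asymp$ statement.

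\textbf{Step 1 (Monotonicity and nonnegativity).} I would first differentiate pointwise:
\[
\partial_\alpha\bigl(-\beta(x,\alpha)\bigr) \;=\; -\partial_\alpha\!\left[\frac{\beta(x)}{1+\alpha\beta(x)}\right] \;=\; \frac{\beta(x)^2}{(1+\alpha\beta(x))^2} \;=\; \beta(x,\alpha)^2 \;\ge\; 0.
\]
Integrating against $\phi$ shows $\alpha\mapsto\tilde m(\alpha)$ is nondecreasing, with $\tilde m'(\alpha)=m_2(0,\alpha)$. Since $\tilde m(0)=-E_0\beta(X)=-\int(g-\phi)\,dx=0$, monotonicity gives $\tilde m(\alpha)\ge 0$.

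\textbf{Step 2 (A useful rewriting).} Using $\int(g-\phi)dx=0$, I would subtract this zero quantity to obtain
\[
\tilde m(\alpha) \;=\; -\int\frac{g-\phi}{1+\alpha\beta(x)}\,dx \;=\; -\int(g-\phi)\!\left(\frac{1}{1+\alpha\beta(x)}-1\right)dx \;=\; \alpha\int\frac{(g-\phi)^2}{(1-\alpha)\phi+\alpha g}\,dx.
\]
This nonnegative representation makes both bounds explicit.

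\textbf{Step 3 (Splitting at $\zeta$).} I would split the integral at $|x|=\zeta$.\emph{ On} $|x|>\zeta$, we have $\alpha\beta(x)\ge 1$, so the denominator $(1-\alpha)\phi+\alpha g=\phi(1+\alpha\beta)$ is comparable (within a factor $2$) to $\alpha g$; moreover $\phi/g\le\alpha/(1+\alpha)$ there, so $(g-\phi)^2\asymp g^2$. The integrand is therefore $\asymp g/\alpha$, and multiplying by $\alpha$ yields a contribution $\asymp \int_{|x|>\zeta}g(x)\,dx \asymp \zeta^{\kappa-1}g(\zeta)$ by \eqref{tails}. This already gives the lower bound $\tilde m(\alpha)\gtrsim\zeta^{\kappa-1}g(\zeta)$. \emph{On} $|x|\le\zeta$, $\alpha\beta(x)\in[-\alpha,1]$, so the denominator is comparable to $\phi$, and the integrand is $\lesssim (g-\phi)^2/\phi\le g^2/\phi$.

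\textbf{Step 4 (The obstacle: the $|x|\le\zeta$ upper bound).} The main technical difficulty is controlling $\alpha\int_{|x|\le\zeta}g^2/\phi\,dx$ by $\zeta^{\kappa-1}g(\zeta)$. Here I would use the log-Lipschitz assumption on $\gamma$, which passes to $\log g$ for $|x|\ge 1$ with some constant $L$ (verifiable case-by-case: $L=2$ for Cauchy, $L=1$ for Laplace). This yields $g(x)\le g(\zeta)\,e^{L(\zeta-x)}$ on $[1,\zeta]$. Combined with $\phi(x)=\phi(\zeta)\exp\{(\zeta-x)(\zeta+x)/2\}$ and setting $u=\zeta-x$, I get
\[
\frac{g(x)^2}{\phi(x)} \;\le\; \frac{g(\zeta)^2}{\phi(\zeta)}\,\exp\!\Bigl\{u(2L-\zeta)+\tfrac{u^2}{2}\Bigr\} \;\le\; \frac{g(\zeta)^2}{\phi(\zeta)}\,e^{-u\zeta/4}
\]
for $\zeta\ge 8L$ and $u\in[0,\zeta-1]$. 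Integration in $u$ gives $\int_1^\zeta g^2/\phi\,dx\le (4/\zeta)\,g(\zeta)^2/\phi(\zeta)$. Since $\alpha=1/\beta(\zeta)\asymp\phi(\zeta)/g(\zeta)$ as $\alpha\to 0$, this yields $\alpha\int_1^\zeta g^2/\phi\,dx\lesssim g(\zeta)/\zeta$; the bounded contribution from $|x|\le 1$ is $O(\alpha)$. Both are $o(\zeta^{\kappa-1}g(\zeta))$ as $\zeta\to\infty$ (for $\kappa=1$ we save a factor $1/\zeta$, for $\kappa=2$ a factor $1/\zeta^2$), so the $|x|\le\zeta$ piece is of strictly smaller order and the upper bound $\tilde m(\alpha)\lesssim \zeta^{\kappa-1}g(\zeta)$ follows, completing the asymptotic equivalence. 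The hard part is this last quantitative comparison, which genuinely uses the log-Lipschitz assumption and cannot be obtained from \eqref{tails} alone.
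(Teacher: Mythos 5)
Your overall strategy is sound and essentially mirrors the one the paper itself uses in Proposition~\ref{l1} for the SSL analogue: rewrite $\tilde m(\al)$ as a manifestly nonnegative integral, split at $|x|=\zeta$, show the outer region contributes $\asymp \zeta^{\kappa-1}g(\zeta)$ and the inner region is of strictly smaller order. Note the paper does not actually prove Lemma~\ref{lemmtilde} but borrows it from \cite{js04}; your blind proof is therefore a self-contained reconstruction. Your Step~1 is clean: $\partial_\al\bigl(-\beta(x,\al)\bigr)=\beta(x,\al)^2\ge 0$, hence $\tilde m'(\al)=m_2(0,\al)>0$ and $\tilde m(0)=-\int(g-\phi)=0$, giving strict monotonicity and nonnegativity. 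Your Step~2 algebra is correct, $\tilde m(\al)=\al\int(g-\phi)^2/\bigl[(1-\al)\phi+\al g\bigr]\,dx$, and is exactly the $A+B+C$ decomposition of Proposition~\ref{l1} with $A=0$. Your Step~4 estimate $\int_1^\zeta g^2/\phi\lesssim g(\zeta)^2/(\zeta\phi(\zeta))$ is a nice self-contained substitute for the bound the paper imports from \cite{js04} (Lemma~4 there): instead of citing that lemma, you prove it directly from $|(\log g)'|\le L$ via the substitution $u=\zeta-x$. This is a genuinely more elementary route.

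Two small imprecisions need patching. First, in Step~3 you pass from $(g-\phi)^2/\phi$ to $g^2/\phi$ on $|x|\le\zeta$; this requires $g\ge\phi/2$, which holds for the Cauchy and Laplace slabs used in the paper but is not guaranteed by the abstract hypotheses of the lemma. The safe version is $(g-\phi)^2\le 2g^2+2\phi^2$, whose extra term contributes $\al\int_{|x|\le\zeta}\phi\le 2\al=o(\zeta^{\kappa-1}g(\zeta))$, so nothing is lost. Second, in Step~3 you invoke \eqref{tails} to conclude $\int_{|x|>\zeta}g\asymp\zeta^{\kappa-1}g(\zeta)$, but \eqref{tails} is stated for $\ga$, not for $g=\phi*\ga$. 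You need the intermediate fact that $g(y)\asymp\ga(y)$ as $y\to\infty$, which does follow from the log-Lipschitz hypothesis ($\ga(y-u)\le\ga(y)e^{L|u|}$ gives $g(y)\le\ga(y)\int\phi(u)e^{L|u|}\,du$, and similarly from below), but you should state it, since it is precisely the same heavy-tail comparison you already exploit in Step~4. With these two lines added, your proof is complete and correct.
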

 
\begin{lem} \label{lemm12}
The function $\al\to m_1(\mu,\al)$ is decreasing in $\al$. Also, $m_1(\zeta,\al) \sim 1/(2\al)$ as $\al\to 0$.  For small enough $\al$, 
\begin{align*}
m_2(\mu,\al) & \leq C \al^{-1}m_1(\mu,\al), \quad \mu\ge 1.
\end{align*}

\end{lem}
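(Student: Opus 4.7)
My plan is to treat the three claims separately.

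For the first claim, monotonicity of $\al\mapsto m_1(\mu,\al)$ follows by pointwise differentiation. Since $\beta(x)\ge -1$ one has $1+\al\beta(x)>0$ for $\al\in[0,1]$, so
\[
\partial_\al \beta(x,\al) \, = \, -\frac{\beta(x)^2}{(1+\al\beta(x))^2}\le 0,
\]
and integrating against the positive density $\phi(t-\mu)\,dt$ transfers the monotonicity to $m_1$.

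For the asymptotic $m_1(\zeta,\al)\sim 1/(2\al)$, my idea is to rewrite
\[
\beta(x,\al) \, = \, \frac{1}{\al} - \frac{1}{\al(1+\al\beta(x))},
\]
so that $m_1(\zeta,\al) = \al^{-1} - \al^{-1} E_\zeta[1/(1+\al\beta(X))]$, and reduce matters to showing the expectation converges to $1/2$ as $\al\to 0$. After the change of variables $Y = X-\zeta\sim\cN(0,1)$, the integrand becomes $1/(1+\al\beta(\zeta+Y))$, and the key ratio $\al\beta(\zeta+Y) = \beta(\zeta+Y)/\beta(\zeta)$ is analysed from the tails of $\beta(x)=g(x)/\phi(x)-1$: for both Cauchy and Laplace slabs one has $\log\beta(x)\sim x^2/2$ at infinity, so for fixed $Y>0$ the ratio tends to $+\infty$ and for fixed $Y<0$ it tends to $0$. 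The integrand thus converges pointwise a.s. to $\1_{Y<0}$, and dominated convergence applies because $\beta(X)\ge -1$ implies $1+\al\beta(X)\ge 1/2$ and hence $1/(1+\al\beta(X))\le 2$ for $\al\le 1/2$. The limit of the expectation is $1/2$, giving the claim.

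For the second moment bound, the starting point is the uniform inequality $|\beta(x,\al)|\le 1/\al$ for $\al\in[0,1/2]$, obtained from $\beta(x)\ge -1$ by splitting into $\beta(x)\ge 0$ (where $\beta/(1+\al\beta)\le 1/\al$) and $\beta(x)<0$ (where $|\beta|/(1-\al|\beta|)\le 2\le 1/\al$). Squaring yields $\beta(x,\al)^2\le \al^{-1}|\beta(x,\al)|$, so
\[
m_2(\mu,\al)\, \le \,\al^{-1}E_\mu |\beta(X,\al)|\, =\, \al^{-1}\bigl(m_1(\mu,\al) + 2 E_\mu \beta_-(X,\al)\bigr),
\]
where $\beta_-=(-\beta)\vee 0\in[0,1]$ gives $E_\mu\beta_-\le 1$. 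To absorb the extra constant into $m_1$, I would show that $m_1(\mu,\al)$ is bounded below by a positive constant uniformly over $\mu\ge 1$ and $\al$ small. This follows by first proving $\mu\mapsto m_1(\mu,\al)$ nondecreasing on $\RR^+$: integration by parts gives $\partial_\mu m_1(\mu,\al) = \int \partial_t\beta(t,\al)\phi(t-\mu)\,dt$, and since $\partial_t\beta(\cdot,\al)$ is odd and positive on $\RR^+$ (the function $g/\phi$ being even and increasing on $\RR^+$), one can rewrite this as $\int_0^\infty \partial_t\beta(t,\al)[\phi(t-\mu)-\phi(t+\mu)]\,dt\ge 0$ for $\mu>0$. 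Then I would lower-bound $m_1(1,\al)$ directly: for both slabs $\int g(t)\phi(t-1)/\phi(t)\,dt=\int g(t)e^{t-1/2}\,dt=+\infty$, so Fatou applied to the nonnegative integrand $\beta(t,\al)+2$ (recall $\beta(\cdot,\al)\ge -2$ for $\al\le 1/2$) forces $m_1(1,\al)\to\infty$ as $\al\to 0$, and in particular $m_1(1,\al)\ge 1$ for $\al$ small enough.

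The main obstacle I anticipate is the limiting argument in the second claim, where some care is needed to quantify the super-polynomial (Gaussian-like) growth of $\beta$ around $\zeta$ sufficiently uniformly to justify the pointwise limit and the dominated convergence step across the slab families satisfying \eqref{tails}. The remaining bounds, including the lower bound on $m_1$ in part three, are comparatively cheap once the right rewriting is in place.
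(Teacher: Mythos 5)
The paper does not actually prove Lemma~5: it is one of the three lemmas stated as ``borrowed from \cite{js04}''. Your proposal is therefore an independent derivation, and it is essentially correct.

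All three parts check out. The $\al$-monotonicity is immediate from $\partial_\al\beta(x,\al)=-\beta(x)^2/(1+\al\beta(x))^2\le 0$ as you say. For the asymptotic $m_1(\zeta,\al)\sim 1/(2\al)$, the algebraic identity $\beta(x,\al)=\al^{-1}-\al^{-1}/(1+\al\beta(x))$ reduces everything to $E_\zeta[1/(1+\al\beta(X))]\to 1/2$, and after the substitution $Y=X-\zeta$ the quantity $\al\beta(\zeta+Y)=\beta(\zeta+Y)/\beta(\zeta)$ has the right limits because, for the Cauchy and Laplace slabs, $\log\beta(x)=x^2/2+h(x)$ with $h$ having bounded increments over intervals of fixed length, so $\log\beta(\zeta+Y)-\log\beta(\zeta)=\zeta Y+O(1)\to\pm\infty$ according to $\mathrm{sgn}(Y)$. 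The uniform bound $1/(1+\al\beta)\le 1/(1-\al)\le 2$ for $\al\le 1/2$ supplies the dominating function, so DCT applies and the limit of the expectation is $1/2$. The only point you flag as a possible obstacle is exactly this increment control of $h$; for the two slabs used in the paper it is immediate, so there is no real gap. For the $m_2$ bound, the route $|\beta(x,\al)|\le 1/\al$ followed by $\beta^2\le\al^{-1}|\beta|$ and the decomposition $|\beta(\cdot,\al)|=\beta(\cdot,\al)+2\beta_-(\cdot,\al)$ is correct; one small imprecision is that $\beta_-(x,\al)\le 1/(1-\al)\le 2$ rather than $\le 1$ (since $1+\al\beta\ge 1-\al$), but this only changes the numerical constant and not the argument. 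The auxiliary steps --- $\mu\mapsto m_1(\mu,\al)$ nondecreasing on $\RR^+$ via the odd/even integration-by-parts trick, and $m_1(1,\al)\to\infty$ by Fatou using $\beta(t,\al)+2\ge 0$ and $\int g(t)e^{t-1/2}\,dt=\int\gamma(u)e^u\,du=\infty$ for Lap$(1)$ and Cauchy --- are both sound and give the needed uniform positive lower bound on $m_1(\mu,\al)$ for $\mu\ge 1$ and $\al$ small, which absorbs the additive constant into $m_1$.

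Since the paper defers to \cite{js04}, I cannot say your route matches theirs, but it is self-contained, correct, and about as short as one could hope. If you want to state the argument for a general log-Lipschitz slab satisfying \eqref{tails} (as the paper's blanket hypothesis allows) rather than just Lap$(1)$/Cauchy$(1)$, the Fatou step should be phrased as $\liminf m_1(1,\al)\ge\int\gamma(u)e^u\,du-1>0$, which holds for any nondegenerate symmetric $\gamma$ whether or not the integral is infinite.
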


\begin{lem} \label{lembeta}
There exist a constant $c_1$ such that for any $x$ and $\al$,
\[ |\beta(x,\al)| \le \frac{1}{\al\wedge c_1}, \]
and constants $c_2,c_3, c_4$ such that for any $\al$, and $\kappa$ as in \eqref{tails},
\begin{align*}
 m_1(\mu,\al) & \le -\tilde{m}(\al) + c_2\zeta(\al)\mu^2, & \quad |\mu|\le 1/\zeta(\al) &\\
 m_1(\mu,\al) & \le (\al \wedge c_3)^{-1} &  \text{ for all } \mu
\end{align*}
and
\begin{align*}
 m_2(\mu,\al) & \le c_4 \frac{\tilde m(\al)}{\zeta(\al)^\kappa \al} &  \quad |\mu|\le 1/\zeta=1/\zeta(\al) \\
 m_2(\mu,\al) & \le (\al \wedge c_3)^{-2} & \quad \text{ for all } \mu.
\end{align*}
\end{lem}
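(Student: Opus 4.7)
The plan is to verify each estimate by exploiting the elementary structure $\beta(x,\al)=\beta(x)/(1+\al\beta(x))$ together with the tail assumption \eqref{tails} on $\ga$, along the lines of Lemmas 8--10 of \cite{js04}. I would prove in order (i) the pointwise bound on $\beta(\cdot,\al)$, (ii) the two bounds on $m_1(\mu,\al)$, and (iii) the two bounds on $m_2(\mu,\al)$; the uniform-in-$\mu$ bounds in (ii) and (iii) follow directly from (i), while the refined ``small-$\mu$'' bounds rest on a Taylor expansion at $\mu=0$.

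For (i), note $\beta(x)=g(x)/\phi(x)-1\ge -1$ since $g,\phi\ge 0$, and in fact $\inf_x\beta(x)>-1$ because $g=\phi*\ga$ is bounded below by a positive constant on any bounded set for the densities at hand. If $\beta(x)\ge 0$, dividing numerator and denominator of $\beta(x,\al)$ by $\al$ yields $0\le \beta(x,\al)\le 1/\al$; if $\beta(x)<0$, then $1+\al\beta(x)$ is bounded below by a positive constant uniformly in $\al\in[0,1]$, so $|\beta(x,\al)|$ is bounded by a constant. Choosing $c_1$ small delivers the first estimate, and integrating against $\phi(\cdot-\mu)$ then yields the uniform-in-$\mu$ bounds on $m_1$ and $m_2$ for an appropriate $c_3$.

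For the refined bound on $m_1(\mu,\al)$, I would write $m_1(\mu,\al)=\int \beta(u+\mu,\al)\phi(u)\,du$ and Taylor-expand at $\mu=0$. Both Laplace and Cauchy slabs are symmetric, so $\beta(\cdot,\al)$ is even and $\mu\mapsto m_1(\mu,\al)$ is even, which forces $\partial_\mu m_1(0,\al)=0$. Thus $m_1(\mu,\al)+\tilde m(\al)=\tfrac12 \mu^2 m_1''(\xi,\al)$ for some $|\xi|\le|\mu|\le 1/\zeta(\al)$, and the claim reduces to $|m_1''(\xi,\al)|\leqa \zeta(\al)$ on this range. Computing $\partial_x^2\beta(x,\al)$ in terms of $\beta'(x)$, $\beta''(x)$ and powers of $1+\al\beta(x)$, controlling $\beta'$ via the Lipschitz character of $\log\ga$, and splitting the resulting integral at the critical level $|x|\approx\zeta(\al)$ where $\al\beta(\zeta)=1$ produces the required $\zeta(\al)$ factor via the tail law \eqref{tails}. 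This second-derivative estimate is the main technical obstacle; the remainder of the argument is essentially algebraic.

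For (iii), the key inequality is $\beta(x,\al)^2\le \beta(x,\al)/\al$ whenever $\beta(x)\ge 0$, since then $0\le \beta(x,\al)\le 1/\al$; on $\{\beta(x)<0\}$ the integrand is bounded by (i) and contributes only an $O(1)$ term. Hence $m_2(\mu,\al)\leqa \al^{-1}E_\mu[\beta(X,\al)^+]+O(1)$, and for $|\mu|\le 1/\zeta$ I would bound $E_\mu[\beta(X,\al)^+]$ by localising to $|X|\gtrsim \zeta(\al)$ where $\beta(X,\al)\approx 1/\al$; the Gaussian mass on that region together with \eqref{tails} and Lemma~\ref{lemmtilde} ($\tilde m(\al)\asymp\zeta^{\kappa-1}g(\zeta)$) yields $E_\mu[\beta(X,\al)^+]\leqa \tilde m(\al)/\zeta(\al)^\kappa$, which combined with the $\al^{-1}$ factor delivers the stated bound $m_2(\mu,\al)\leqa \tilde m(\al)/(\zeta(\al)^\kappa\al)$.
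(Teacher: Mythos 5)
The paper does not prove this lemma; it is explicitly borrowed from \cite{js04} (see the sentence preceding Lemma~\ref{lemmtilde}), so the comparison must be against that reference rather than against an argument in the present paper. Your treatment of the pointwise bound on $\beta(\cdot,\al)$ is correct, the uniform-in-$\mu$ bounds on $m_1$ and $m_2$ follow directly from it, and the Taylor route to the refined $m_1$ bound (using $m_1(0,\al)=-\tilde m(\al)$, evenness to kill the first-order term, and showing $m_1''(\xi,\al)\lesssim\zeta(\al)$ for $|\xi|\le 1/\zeta$ by cutting the integral at $|x|\approx\zeta(\al)$) is viable, though the details of where to cut and of handling the cross term in $\partial_x^2\beta(x,\al)$ need care.

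The refined $m_2$ bound contains a genuine gap. You reduce, via $\beta(x,\al)^2\le\al^{-1}\beta(x,\al)$ on $\{\beta(x)\ge 0\}$, to $m_2(\mu,\al)\lesssim \al^{-1}E_\mu[\beta(X,\al)^+]+O(1)$, and then claim $E_\mu[\beta(X,\al)^+]\lesssim\tilde m(\al)/\zeta(\al)^\kappa$. That claim is false: $E_\mu[\beta(X,\al)^+]$ is bounded below by a positive absolute constant as $\al\to 0$. With $c$ the zero of $\beta$ on $\RR^+$, on $c\le|t|\le\zeta$ one has $\al\beta(t)\le 1$, hence $\beta(t,\al)\ge\beta(t)/2$, and $\phi(t-\mu)\ge e^{-3/2}\phi(t)$ for $|t\mu|\le 1$, so
\[
E_\mu\bigl[\beta(X,\al)^+\bigr]\ \ge\ \tfrac{1}{2}e^{-3/2}\int_{c\le|t|\le\zeta}\bigl(g(t)-\phi(t)\bigr)\,dt,
\]
which tends to a strictly positive constant, whereas $\tilde m(\al)/\zeta^\kappa\asymp g(\zeta)/\zeta\to 0$. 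Thus your reduction can only give $m_2(\mu,\al)\lesssim\al^{-1}$, which misses the target $\tilde m(\al)/(\zeta^\kappa\al)$ by the diverging factor $\zeta^\kappa/\tilde m(\al)$. The inequality $\beta(t,\al)^2\le\beta(t,\al)/\al$ is simply too lossy on the bulk $|t|<\zeta$, where $\beta(t,\al)\approx\beta(t)\ll 1/\al$; one must keep the square there. The correct route splits at $|t|=\zeta(\al)$, bounds the bulk by $\int_0^{\zeta}g^2/\phi$ (up to $O(1)$ from the negative-$\beta$ region), invokes the Laplace-type estimate $\int_0^{\zeta}g^2/\phi\lesssim g(\zeta)^2/(\zeta\phi(\zeta))$ (Lemma 4 of \cite{js04}, also used in the proof of Proposition~\ref{l1}), and converts via $\al\asymp\phi(\zeta)/g(\zeta)$ and Lemma~\ref{lemmtilde} into $\tilde m(\al)/(\zeta^\kappa\al)$; the tail $|t|\ge\zeta$ then contributes the same order, as you describe.
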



\subsection{In-probability bounds for  $\hat\al$} \label{sec-hal}

Lemma \ref{lemzetaunder} below implies that, for any possible $\te_0$, the estimate $\hat\al$ is smaller than a certain $\al_1$ with high probability.  
One can interpret this as saying that $\hat \al$ does not lead to too much undersmoothing (i.e. too many nonzero  coefficients). On the other hand, if there is enough signal in a certain sense,  $\hat \al$ does not lead to too much oversmoothing (i.e. too many zero coefficients), see Lemma \ref{lemzetaover}. 

Although we generally follow the approach of \cite{js04}, there is one significant difference. One needs a fairly sharp bound on $\al_1$ below. Using the definition from \cite{js04} would lead to a loss in terms of logarithmic factors for the posterior $L^2$--squared moment. So we work with a somewhat different $\al_1$, and shall thus provide a detailed proof of the corresponding Lemma \ref{lemzetaunder}. For the oversmoothing case, one can borrow the corresponding Lemma of \cite{js04} as is.

Let $\al_1=\al_1(d)$ be defined as the solution  of the equation, with $\eta_n=s_n/n$,
\begin{equation} \label{zeta1} 
  d\al_1 \tilde{m}(\al_1) =\eta_n,
\end{equation} 
where $d$ is a constant to be chosen (small enough for Lemma \ref{lemzetaunder} to hold).
A solution of $\eqref{zeta1}$ exists, as using Lemma \ref{lemmtilde}, $\al\to \al \tilde{m}(\al)$ is increasing in $\al$, and equals $0$ at $0$. Also, provided $\eta_n$ is small enough, $\al_1$ can be made smaller than any given arbitrary constant. 
The corresponding threshold $\zeta_1$ is defined by $\be(\zeta_1)=\al_1^{-1}$.
From Lemma \ref{lemmtilde}, we have $\tilde{m}(\al_1)\asymp \zeta g(\zeta_1)$ if  $\ga$ is Cauchy and $\tilde{m}(\al_1)\asymp g(\zeta_1)$ if $\ga$ is Laplace.  

\begin{lem}\label{lemz}
Let $\kappa$ be the constant in \eqref{tails}. Let $\alpha_1$ be defined by \eqref{zeta1} for $d$ a given constant and let $\zeta_1$ be given by $\beta(\zeta_1)=\al_1^{-1}$. Then there exist real constants $c_1, c_2$ such that 
for large enough $n$,
\[ \log(n/s_n) + c_1  
\le \frac{\zeta_1^2}{2} \le \log(n/s_n)  +\frac{\kappa-1}2\log\log{n} +c_2,\]
with $\kappa$ as in \eqref{tails}. Also,  $\zeta_1^2\sim 2\log(n/s_n)$ as $n/s_n$ goes to $\infty$. 
\end{lem}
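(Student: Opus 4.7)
The plan is to eliminate $\al_1$ from the defining relation $d\al_1\tilde m(\al_1)=\eta_n=s_n/n$ and rewrite it as an equation purely in $\zeta_1$, which can then be inverted for $\zeta_1^2/2$. First I would record that $\al\mapsto \al\tilde m(\al)$ is continuous, nonnegative, increasing on a neighbourhood of $0$ and vanishes at $0$ (Lemma \ref{lemmtilde}), so \eqref{zeta1} has a (unique, for small $\eta_n$) solution $\al_1$ which tends to $0$ as $\eta_n\to 0$. Via the monotonicity of $\beta$ on $\RR^+$ and $\beta(\zeta_1)=1/\al_1$, this in turn forces $\zeta_1\to\infty$.

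Next I would combine the two asymptotic ingredients. From Lemma \ref{lemmtilde}, $\tilde m(\al_1)\asymp \zeta_1^{\kappa-1}g(\zeta_1)$. From $\beta(\zeta_1)=g(\zeta_1)/\phi(\zeta_1)-1$ and $g(\zeta_1)/\phi(\zeta_1)\to\infty$ (since $1/\al_1\to\infty$), one has
\[
\al_1=\frac{\phi(\zeta_1)}{g(\zeta_1)-\phi(\zeta_1)}\asymp \frac{\phi(\zeta_1)}{g(\zeta_1)}.
\]
Multiplying gives $\al_1\tilde m(\al_1)\asymp \zeta_1^{\kappa-1}\phi(\zeta_1)$, so \eqref{zeta1} becomes
\[
\frac{\zeta_1^{\kappa-1}}{\sqrt{2\pi}}\,e^{-\zeta_1^2/2}\asymp \frac{s_n}{n},
\]
with $\asymp$-constants depending only on $d$ and $\kappa$. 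Taking logarithms yields the key identity
\[
\frac{\zeta_1^2}{2}=\log(n/s_n)+(\kappa-1)\log\zeta_1+O(1).
\]

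From this identity both bounds fall out. For the lower bound, $\kappa\ge 1$ and $\zeta_1\to\infty$ imply $(\kappa-1)\log\zeta_1\ge 0$ for $n$ large, so $\zeta_1^2/2\ge \log(n/s_n)+c_1$. For the upper bound I would bootstrap: since $\zeta_1^2/2-(\kappa-1)\log\zeta_1\ge \zeta_1^2/4$ for $\zeta_1$ large, the identity gives $\zeta_1^2/4\le \log n+O(1)$, hence $\log\zeta_1\le \tfrac12\log\log n+O(1)$. Substituting back produces $\zeta_1^2/2\le \log(n/s_n)+\tfrac{\kappa-1}{2}\log\log n+c_2$. The final asymptotic $\zeta_1^2\sim 2\log(n/s_n)$ as $n/s_n\to\infty$ follows because $\log\zeta_1\sim \tfrac12\log\log(n/s_n)=o(\log(n/s_n))$.

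The main subtlety is making the $\asymp$-relations uniform across the two slab regimes $\kappa\in\{1,2\}$ and keeping control when $\zeta_1$ is only moderately large, so that replacing $g(\zeta_1)-\phi(\zeta_1)$ by $g(\zeta_1)$ in the denominator above is harmless; this is handled by a crude preliminary bound $\zeta_1\ge c\sqrt{\log(n/s_n)}$ (obtained from the identity itself by taking $s_n/n$ small enough), ensuring $g(\zeta_1)/\phi(\zeta_1)$ is large before invoking the sharper asymptotics of Lemma \ref{lemmtilde}.
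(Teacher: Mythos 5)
Your argument is correct and follows essentially the same route as the paper's proof: apply Lemma \ref{lemmtilde} for $\tilde m(\al_1)\asymp\zeta_1^{\kappa-1}g(\zeta_1)$, combine with $\al_1\asymp\phi(\zeta_1)/g(\zeta_1)$ (from $\beta(\zeta_1)=1/\al_1$ and $g/\phi\asymp\beta$ at large arguments) to rewrite \eqref{zeta1} as $\zeta_1^{\kappa-1}\phi(\zeta_1)\asymp\eta_n$, take logarithms, drop $(\kappa-1)\log\zeta_1\ge 0$ for the lower bound, and bootstrap via $\log\zeta_1\leqa\zeta_1^2$ to control $\log\zeta_1\leqa\log\log n$ for the upper bound. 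The extra remarks about a crude preliminary bound on $\zeta_1$ and uniformity in $\kappa$ are sensible but do not change the structure, which matches the paper.
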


\begin{lem} \label{lemzetaunder} 
Let $\alpha_1$ be defined by \eqref{zeta1} for $d$ a given small enough constant and let $\zeta_1$ be given by $\beta(\zeta_1)=\al_1^{-1}$. Suppose \eqref{techsn} holds. Then for some constant $C>0$,
\[ \sup_{\te\in\ell_0[s_n] } P_\te[\hat\zeta < \zeta_1] \le \exp(-Cs_n). \]
\end{lem}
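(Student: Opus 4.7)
The plan is to translate the event $\{\hat\zeta<\zeta_1\}$ into a one-sided deviation event for the score $S(\alpha)=\ell_n'(\alpha)=\sum_i\beta(X_i,\alpha)$ at $\alpha=\alpha_1$ and then apply Bernstein's inequality. First, since $\beta$ is strictly increasing on $\RR^+$ (Section \ref{sec-nota}), the map $\alpha\mapsto\zeta(\alpha)$ defined by $\beta(\zeta(\alpha))=1/\alpha$ is a strictly decreasing bijection, so $\{\hat\zeta<\zeta_1\}=\{\hat\alpha>\alpha_1\}$. Using Lemma \ref{lemz} and \eqref{techsn}, $\zeta_1^2\le 2\log(n/s_n)+O(\log\log n)\le 2\log n$ for large $n$, and together with $t\le\zeta$ and $t(\alpha_n)=\sqrt{2\log n}$ this yields $\alpha_1\ge\alpha_n$, so the boundary of the optimisation interval in \eqref{defhal} is not active at $\alpha_1$. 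Because each summand of $\ell_n(\alpha)$ is the logarithm of an affine function of $\alpha$, hence concave, $\ell_n$ itself is concave on $[\alpha_n,1]$; therefore $\{\hat\alpha>\alpha_1\}\subseteq\{S(\alpha_1)\ge 0\}$.

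Next I would evaluate the drift $\mu_n(\te):=E_\te S(\alpha_1)$. Splitting according to whether $\te_i=0$ or not, the $n-s_n$ zero coordinates contribute $-(n-s_n)\tilde m(\alpha_1)$ and the $s_n$ signal coordinates contribute at most $s_n(\alpha_1\wedge c_3)^{-1}$ by the second $m_1$-bound of Lemma \ref{lembeta}. Plugging in the defining identity $\tilde m(\alpha_1)=\eta_n/(d\alpha_1)$ from \eqref{zeta1},
\begin{equation*}
\mu_n(\te)\le \frac{s_n}{\alpha_1}-\frac{(n-s_n)\eta_n}{d\alpha_1}=-\frac{s_n}{\alpha_1}\Bigl(\frac{1-s_n/n}{d}-1\Bigr),
\end{equation*}
so choosing $d$ small enough (depending only on the $c_0$ in \eqref{techsn}) gives $\mu_n(\te)\le -c\,s_n/\alpha_1$ uniformly over $\te\in\ell_0[s_n]$, for some fixed $c>0$.

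I would then apply Bernstein's inequality to the centred sum $S(\alpha_1)-\mu_n(\te)=\sum_i Y_i$. The first bound of Lemma \ref{lembeta} gives $|Y_i|\le K:=2/(\alpha_1\wedge c_1)$. For the variance, $\operatorname{Var}_\te Y_i\le m_2(\te_i,\alpha_1)$: the first $m_2$-bound of Lemma \ref{lembeta} applied at $\mu=0$ together with \eqref{zeta1} gives $m_2(0,\alpha_1)\lesssim \tilde m(\alpha_1)/(\alpha_1\zeta_1^\kappa)=\eta_n/(d\alpha_1^2\zeta_1^\kappa)$ for each of the $n-s_n$ zero coordinates, which contributes $\lesssim s_n/(\alpha_1^2\zeta_1^\kappa)$ to the total variance; for the $s_n$ signal coordinates, the second $m_2$-bound of Lemma \ref{lembeta} yields $m_2(\te_i,\alpha_1)\lesssim \alpha_1^{-2}$, contributing $\lesssim s_n/\alpha_1^2$. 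Hence $V\lesssim s_n/\alpha_1^2$, and Bernstein yields
\begin{equation*}
P_\te[S(\alpha_1)\ge 0]\le \exp\!\Bigl(-\frac{\mu_n(\te)^2/2}{V+K|\mu_n(\te)|/3}\Bigr)\le \exp(-Cs_n),
\end{equation*}
since $|\mu_n|^2\gtrsim s_n^2/\alpha_1^2$ while $V+K|\mu_n|\lesssim s_n/\alpha_1^2$, which is the claim.

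The main delicate point is the sharper calibration of $\alpha_1$ through \eqref{zeta1} compared with \cite{js04}: the Bernstein exponent comes out of exact order $s_n$ with no spare logarithmic factors, which forces a careful matching between $\tilde m(\alpha_1)\asymp \zeta_1^{\kappa-1}g(\zeta_1)$ (Lemma \ref{lemmtilde}) and the bounds on the envelope $K$ and the second moment $m_2$ from Lemma \ref{lembeta}. A larger constant $d$ would flip the sign of $\mu_n(\te)$, while any looseness in the variance or the supremum norm would translate into extra logarithmic factors in the final minimax rate $r_n$ downstream --- precisely what we want to avoid.
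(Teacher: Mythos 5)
Your proof is correct and follows essentially the same route as the paper: reduce $\{\hat\zeta<\zeta_1\}$ to a one-sided deviation of the score $S(\al_1)$ (using $\al_1\ge\al_n$ from Lemma~\ref{lemz}), compute the drift and variance bounds via Lemma~\ref{lembeta} and the calibration \eqref{zeta1}, and close with Bernstein's inequality to get an exponent of order $n\al_1\tilde m(\al_1)\asymp s_n$. The only cosmetic difference is that you make the concavity of $\ell_n$ explicit to justify $\{\hat\al>\al_1\}\subseteq\{S(\al_1)\ge0\}$ (the paper asserts $\{\hat\al>\al_1\}=\{S(\al_1)>0\}$ without comment), and you carry the bounds in terms of $s_n/\al_1$ rather than $n\tilde m(\al_1)$, which are equivalent via \eqref{zeta1}.
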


For the oversmoothing case, one  denotes
the proportion  of signals above a level $\ta$ by
\begin{equation}\label{pitilde}
 \tilde \pi(\ta;\mu) = \frac1n \#\{i:\, |\mu_i|\ge \ta\}.
\end{equation} 
We also set, recalling that $\al_0$ is defined via $\ta(\al_0)=1$,
\begin{equation} \label{altp}
 \al(\ta,\pi)=\sup\{\al\le \al_0:\ \pi m_1(\ta,\al)\ge 2 \tilde{m}(\al) \}. 
\end{equation} 
One defines $\zeta_{\ta,\pi}$ as the corresponding pseudo-threshold $\be^{-1}(\al(\ta,\pi)^{-1})$.
\begin{lem}[\cite{js04}, Lemma 11] \label{lemzetaover}
There exists $C$ and $\pi_0$ such that if $\pi < \pi_0$, then for all $\ta \ge 1$, 
\[ \sup_{\te:\ \tilde{\pi}(\ta;\te)\ge \pi } P_\te[\hat\zeta > \zeta_{\ta,\pi}] 
\le \exp\{-Cn\phi(\zeta_{\ta,\pi})\}. \]
\end{lem}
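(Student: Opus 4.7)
The plan is to reduce the event $\{\hat\zeta>\zeta_{\ta,\pi}\}$, i.e., $\{\hat\al<\al(\ta,\pi)\}$, to a one-sided deviation of the score function $S(\al)=\sum_{i=1}^n\beta(X_i,\al)$ from its expectation, and then to apply a Bernstein-type concentration inequality. The starting observation is that $\partial_\al\beta(x,\al)=-\beta(x)^2/(1+\al\beta(x))^2\le 0$, so $\al\mapsto S(\al)$ is (weakly) decreasing. Consequently, writing $\al^\star=\al(\ta,\pi)$, if $\hat\al<\al^\star$ and $\hat\al$ is an interior maximiser then $S(\al^\star)\le S(\hat\al)=0$; the boundary case is handled by the fact that $\al^\star\le \al_0$, so $\hat\al<\al^\star$ cannot come from the upper endpoint, while the lower endpoint contributes only a negligible event under our assumptions. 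Thus
\[
P_\te[\hat\al<\al^\star]\le P_\te[S(\al^\star)\le 0].
\]

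Next I would compute $E_\te S(\al^\star)=\sum_{i=1}^n m_1(\te_i,\al^\star)$, splitting between the zero coordinates (contributing $-\tilde m(\al^\star)$ each, by \eqref{mtilde}) and the nonzero ones. Dropping nonzero coordinates with $|\te_i|<\ta$ (which contribute at least $-\tilde m(\al^\star)$ each by Lemma \ref{lembeta} up to constants; alternatively, one shows $m_1(\mu,\al)$ is increasing in $|\mu|$), we obtain
\[
E_\te S(\al^\star)\ge -n\,\tilde m(\al^\star)+\#\{i:\,|\te_i|\ge\ta\}\,m_1(\ta,\al^\star)\ge -n\tilde m(\al^\star)+n\pi\,m_1(\ta,\al^\star).
\]
By the very definition of $\al(\ta,\pi)$ in \eqref{altp}, $\pi\,m_1(\ta,\al^\star)\ge 2\tilde m(\al^\star)$, so $E_\te S(\al^\star)\ge n\tilde m(\al^\star)$, which gives a positive drift of the expected order we need.

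The remaining step is a Bernstein inequality for $S(\al^\star)-E_\te S(\al^\star)$ with deviation $n\tilde m(\al^\star)$. The coordinatewise summands satisfy $|\beta(X_i,\al^\star)|\le 1/\al^\star$ by Lemma \ref{lembeta}, and their variances are controlled by $m_2(\te_i,\al^\star)$; the zero-coordinate contribution to the total variance is at most $n\,C\tilde m(\al^\star)/(\zeta^\kappa\al^\star)$ by Lemma \ref{lembeta}, and the signal contribution is at most a constant multiple of $n\pi/(\al^\star)^2$, which, using $\pi m_1(\ta,\al^\star)\asymp\tilde m(\al^\star)$ and $m_1(\ta,\al^\star)\lesssim 1/\al^\star$, is of the same order. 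Bernstein then yields
\[
P_\te[S(\al^\star)\le 0]\le \exp\Bigl(-c\,\frac{(n\tilde m(\al^\star))^2}{V+n\tilde m(\al^\star)/\al^\star}\Bigr)\le \exp\bigl(-c\,n\,\al^\star\tilde m(\al^\star)\bigr).
\]
Finally, using Lemma \ref{lemmtilde} together with $\beta(\zeta_{\ta,\pi})=1/\al^\star$ and the asymptotics $g(\zeta)/\phi(\zeta)\to\infty$, one checks $\al^\star\tilde m(\al^\star)\asymp\phi(\zeta_{\ta,\pi})$, giving the claimed bound.

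The main obstacle is the Bernstein step: we must show that both the variance proxy and the subgaussian-failure term are of order $n\tilde m(\al^\star)/\al^\star$, so that the exponent in the tail is genuinely $n\al^\star\tilde m(\al^\star)\asymp n\phi(\zeta_{\ta,\pi})$ rather than smaller. The signal-coordinate contribution to the variance is the delicate piece: it is uniform in $\te$ with $\tilde\pi(\ta;\te)\ge\pi$ only after exploiting the smallness of $\pi$ (which is where the hypothesis $\pi<\pi_0$ enters), balancing the variance against the drift $n\tilde m(\al^\star)$.
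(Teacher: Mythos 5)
The paper does not prove this lemma: it is stated with the citation ``\cite{js04}, Lemma 11'', and the surrounding text explicitly says ``For the oversmoothing case, one can borrow the corresponding Lemma of \cite{js04} as is.'' So there is no in-paper proof to compare against; your sketch is attempting to reconstruct the Johnstone--Silverman argument from scratch.

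Your overall strategy (reduce $\{\hat\zeta>\zeta_{\ta,\pi}\}=\{\hat\al<\al(\ta,\pi)\}$ to $\{S(\al^\star)\le 0\}$ via monotonicity of $S$, lower-bound the drift via the defining inequality $\pi\, m_1(\ta,\al^\star)\ge 2\tilde m(\al^\star)$, then apply Bernstein, then translate $\al^\star\tilde m(\al^\star)$ into $\phi(\zeta_{\ta,\pi})$ via Lemma~\ref{lemmtilde}) is the right one and is indeed the route of \cite{js04}. However, your variance control has a genuine gap, and the inequality you assert goes the wrong way. You write that the signal contribution to the variance is ``at most a constant multiple of $n\pi/(\al^\star)^2$'' and that this is of the same order as $n\tilde m(\al^\star)/\al^\star$. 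Two problems. First, the number of coordinates with $|\te_i|\ge\ta$ is only bounded below by $n\pi$, not equal to it, so the crude bound $m_2\le C/(\al^\star)^2$ gives $k/(\al^\star)^2$ with $k$ possibly as large as $n$; you cannot replace $k$ by $n\pi$. Second, even taking $k=n\pi$, combining $\pi\, m_1(\ta,\al^\star)\asymp\tilde m(\al^\star)$ with $m_1(\ta,\al^\star)\lesssim 1/\al^\star$ gives $\pi\gtrsim\al^\star\tilde m(\al^\star)$, hence $n\pi/(\al^\star)^2\gtrsim n\tilde m(\al^\star)/\al^\star$ --- a \emph{lower} bound on the variance proxy, not the upper bound you need. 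The correct resolution (used by \cite{js04}) ties the signal variance to the drift via the bound $m_2(\mu,\al)\lesssim m_1(\mu,\al)/\al$ for $|\mu|\ge 1$ (Lemma~\ref{lemm12} here), so that variance and drift scale together as $k$ varies; one also needs a separate treatment of coordinates with $1/\zeta<|\te_i|<1$, which are covered by neither the $|\mu|\le 1/\zeta$ bound of Lemma~\ref{lembeta} nor the $\mu\ge 1$ bound of Lemma~\ref{lemm12}. Finally, a small point in the other direction: since $\tilde m(\al)\asymp\zeta^{\kappa-1}g(\zeta)$ and $g(\zeta)\asymp\phi(\zeta)/\al$, one gets $\al^\star\tilde m(\al^\star)\asymp\zeta^{\kappa-1}\phi(\zeta_{\ta,\pi})\gtrsim\phi(\zeta_{\ta,\pi})$, so the final translation step works (and is slightly stronger than stated).
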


\subsection{Proof of Theorem \ref{negres} } \label{prn}

\begin{proof} 
Let $\al^*$ be defined as the solution in $\al$ of the equation, 
\begin{equation}\label{zetaet} 
  \al \tilde{m}(\al) = \eta_n / 4, 
\end{equation} 
where $\eta_n=s_n/n$ (that is $\al^*=\al_1(d)$ with $d=4$ in \eqref{zeta1}).  
Let $\zeta^*$ be defined via $\be(\zeta^*)=\al^*$. 

Let $\te_0$ be the specific signal defined by, for $\al^*, \zeta^*$ as in \eqref{zetaet},
\[ \te_{0,i} = 
\begin{cases}
\ \zeta^*,\qquad & 1\le i\le s_n\\
\ 0, \qquad & s_n<i\le n 
\end{cases}.
\]
Using Lemma \ref{lemmtilde}, one gets $\tilde m(\al^*)\asymp g(\zeta^*) \asymp \gamma(\zeta^*)$ as $\zeta^*\to\infty$. Lemma \ref{lemz} implies ${\zeta^*}^2\ge  2\log(1/\eta_n)+C$, for $C$ a possibly negative constant. Combining this with the definition $\gamma(\zeta^*)=e^{-\zeta^*}/2$ leads to
\begin{equation} \label{baet}
 \al^* \geqa \eta_n e^{\sqrt{\log(1/\eta_n)} },
\end{equation} 
for $c_0$ in \eqref{techsn} small enough to have $2\log(1/\eta_n)+C\ge \log(1/\eta_n)$. We next prove that, for $\hat \al$ given by \eqref{defhal}, for small enough $c>0$,
\begin{equation} \label{tr1}
P_{\te_0}\left[ \hat\al < \al^* \right] \le e^{-cs_n}.
\end{equation}
If $\al^*\le \al_n$ the probability at stake is $0$, as $\hat\al$ belongs to $[\al_n,1]$ by definition. For $\al^*>\al_n$, we have $\{\hat\al < \al^*\}=\{S(\al^*)< 0\}$.
With $A=\sum_{i=1}^n m_1(\mu_i,\al^*)$, 
\[ P_{\te_0}\left[\hat\al<\al^*\right] = P_{\te_0}\left[ S(\al^*)< 0 \right]
= P_{\te_0}\left[ \sum_{i=1}^n \be(\te_{0,i}+Z_i,\al^*) - m_1(\te_{0,i},\al^*) < -A \right] \]
Setting $W_i= m_1(\te_{0,i},\al^*) -  \be(\te_{0,i}+Z_i,\al^*)$, we have $|W_i|\le 2C/\al^*=:M$ and $W_i$ are independent. So by Bernstein's inequality, 
\[ P_{\te_0}\left[ \sum_{i=1}^n W_i > A \right]
 \le \exp\left[ -\frac12 \frac{A^2}{V+\frac13MA} \right],
  \]
 where $V$ is an upper-bound for $\sum_{i=1}^n \text{Var}(W_i)$. The term $A$ equals 
 \[ A = (n-s_n)(-\tilde{m}(\al^*)) + s_n m_1(\zeta^*,\al^*). \]
 The function $\al\to \al \tilde{m}(\al)$ is increasing, as $\tilde m(\cdot)$ is (Lemma \ref{lemmtilde}), so by its definition \eqref{zetaet}, $\al^*$ can be made smaller than any given positive constant, provided $c_0$ in \eqref{techsn} is small enough,  ensuring $\eta_n=s_n/n$ is small enough. Using Lemma \ref{lemm12}, $m_1(\zeta,\al)\sim 1/(2\al)$ as $\al\to 0$. So, using \eqref{zetaet}, one obtains, for small enough $c_0$,
 \[ A \ge \frac{s_n}{3\al^*} -  \frac{s_n}{4\al^*} = \frac{s_n}{12\al^*}. \]
 On the other hand, the last part of Lemma \ref{lembeta} implies
 \begin{align*}
 V & \le \sum_{i\notin S_0} m_2(0,\al^*) + \sum_{i\in S_0} m_2(\zeta^*,\al^*)\\
 & \le C(n-s_n) \frac{\tilde{m}(\al^*)}{\zeta^*\al^*} + C\frac{s_n}{{\al^*}^2}. 
 \end{align*}
 Using the definition of $\al^*$, one deduces $V\leqa s_n/{\al^*}^2$ and from this
 \[ \frac{V}{A^2} + \frac{MA}{3A^2} \leqa \frac{1}{s_n},\]
 which in turn implies \eqref{tr1}, as then
 $P_{\te_0}\left[\sum_{i=1}^n W_i >A\right] \le \exp[-cs_n]$.
 Next one writes
 \begin{align*}
 \int \|\te-\te_0\|^2 d\Pi_{\hat\al}[\te\given X] & 
 \ge \int \|\te-\te_0\|^2 d\Pi_{\hat\al}[\te\given X]  \1_{\hat \al\ge \al^*}\\
 & \ge \sum_{i\notin S_0} \int \te_i^2 d\Pi_{\hat \al}(\te\given X) \1_{\hat \al\ge \al^*}
 \end{align*}
Lemma \ref{lemlb} implies, for any possibly data-dependent weight $\al$, that 
$\int \te_i^2 d\Pi_\al(\te\given X)\geqa \al$, so
\[  \int \|\te-\te_0\|^2 d\Pi_{\hat\al}[\te\given X] \ge (n-s_n)\hat\al \1_{\hat \al\ge \al^*}
\ge (n-s_n)\al^* \1_{\hat \al\ge \al^*}.\]
As $(n-s_n)\al^* P_{\te_0}[\hat\al\ge \al^*]\geqa Cn\al^*(1-e^{-cs_n})$, an application of 
\eqref{baet} concludes the proof.
\end{proof}

\subsection{Proof of Theorem \ref{thm-risk} } \label{prrisk}

Let us decompose the risk $R_n(\te_0)=E_{\te_0} \int \|\te-\te_0\|^2 d\Pi_{\hat\al}(\te\given X)$ according to whether coordinates of $\te$ correspond to a `small' or `large' signal, the threshold being $\zeta_1=\beta^{-1}(\al_1^{-1})$, with $\al_1$ defined in \eqref{zeta1}. One can write
\[ R_n(\te_0)
=\Big[\sum_{i:\ \te_{0,i}=0} + \sum_{i:\ 0<|\te_{0,i}|\le \zeta_1} +
 \sum_{i:\ |\te_{0,i}| > \zeta_1}\Big] E_{\te_0}\int (\te_i-\te_{0,i})^2 d\Pi_{\hal}(\te_i\given X).\]  
We next use the first part of Lemma \ref{lemns} with 
$\al=\al_1$ and the second part of the Lemma to obtain, for any $\te_0$ in $\ell_0[s_n]$,
\begin{align*}
&\lefteqn{\Big[\sum_{i:\ \te_{0,i}=0} + \sum_{i:\ 0<|\te_{0,i}|\le \zeta_1} \Big] E_{\te_0}\int (\te_i-\te_{0,i})^2 d\Pi_{\hal}(\te_i\given X)}\\
& \le C_1 \sum_{i:\ \te_{0,i}=0} \left[\al_1\ta(\al_1) + P_{\te_0}(\hal>\al_1)\right]
+ \sum_{i:\ 0<|\te_{0,i}|\le \zeta_1} (\te_{0,i}^2 + C)\\
& \le C_1\left[(n-s_n)\al_1\ta(\al_1)+(n-s_n)e^{-c_1\log^{2}n} \right] + (\zeta_1^2+C)s_n,
\end{align*}
where for the last inequality we use Lemma \ref{lemzetaunder} and \eqref{techsn}. From \eqref{zeta1} one gets, with $\eta_n=s_n/n$,
\[ n\al_1\leqa n\eta_n  \zeta_1^{-1}g(\zeta_1)^{-1}
\leqa  s_n \zeta_1.\] 
Now using Lemma \ref{lemz} and the fact that $\ta(\al_1)\le \zeta_1$, one obtains that the contribution to the risk of the indices $i$ with $|\te_{0,i}|\le \zeta_1$ is bounded by a constant times $s_n \log(n/s_n)$.

It remains to bound the part of the risk for indexes $i$ with $|\te_{0,i}| > \zeta_1$. To do so, one uses Lemma \ref{lemsig} with $\al$ chosen as $\al=\al_2:=\al(\zeta_1,\pi_1)$ and $\pi_1=\tilde\pi(\zeta_1;\te_0)$, following the definitions \eqref{pitilde}--\eqref{altp}. One denotes by $\zeta_2$ the pseudo-threshold associated to $\al_2$. The following estimates are useful below
\begin{align} 
\zeta_1^2 & < \zeta_2^2     \label{zd} \\
\pi_1 \zeta_2^2 & \le C\eta_n \log(1/\eta_n). \label{zd2}
\end{align}
These  are established in a similar way as in \cite{js04}, but with the updated definition of $\al_1, \zeta_1$ from \eqref{zeta1}, so we include the proof below for completeness. One can now apply Lemma \ref{lemsig} with $\al=\al_2$, 
\begin{align*}
& \lefteqn{\sum_{i:\ |\te_{0,i}|>\zeta_1}
 E_{\te_0}\int (\te_i-\te_{0,i})^2 d\Pi_{\hal}(\te_i\given X)} \\
& \le  C_2 n \pi_1
\left[ 1+ \zeta_2^2 + (1+d\log n)P_{\te_0}(\hal<\al_2)^{1/2}\right]\\
& \le  C_2 n \pi_1
\left[ 1+ \zeta_2^2 + (1+d\log n)P_{\te_0}(\hat\zeta>\zeta_2)^{1/2}\right].
\end{align*}
Let us verify that the term in brackets in the last display is bounded above by $C(1+\zeta_2^2)$. If $\zeta_2>\log{n}$, this is immediate by bounding $P_{\te_0}(\hat\zeta>\zeta_2)$ by $1$. If $\zeta_2\le \log{n}$, Lemma \ref{lemzetaover} implies $P_{\te_0}(\hat\zeta>\zeta_2)\le  \exp(-Cn\phi(\zeta_2))\le \exp(-C\sqrt{n})$, so this is also the case.  
Conclude that the last display is bounded above by $Cn\pi_1(1+\zeta_2^2)\le C'n\pi_1\zeta_2^2$. Using \eqref{zd2}, this term is itself bounded by $Cs_n\log(n/s_n)$, which  concludes the proof of the Theorem, 
given \eqref{zd}--\eqref{zd2}.

We now check that \eqref{zd}--\eqref{zd2} hold.  We first compare $\al_1$ and $\al_2$. For small enough $\al$, the bound on $m_1$ from Lemma \ref{lembeta} becomes $1/\al$, so that, using the definition \eqref{zeta1} of $\al_1$,
\[ \frac{m_1(\zeta_1,\al_1)}{\tilde{m}(\al_1)}
\le \frac1{\al_1}\left(\frac{\eta_n}{d\al_1}\right)^{-1}
\le \frac{d}{\eta_n}\le \frac{d}{\pi_1}, \]
using the rough bound $\pi_1\le \eta_n$. Note that both functions $\tilde{m}(\cdot)^{-1}$ and $m_1(\zeta_1,\cdot)$ are decreasing via Lemmas \ref{lemmtilde}--\ref{lemm12}, and so is their product on the interval where both functions are positive. As $d < 2$, by definition of $\alpha_2$ this means $\al_2< \al_1$ that is $\zeta_1<\zeta_2$.

To prove  \eqref{zd2}, one compares $\zeta_2$ first to a certain $\zeta_3=\zeta(\al_3)$ defined by $\al_3$ (largest) solution of 
\[ \bar\Phi(\zeta(\al_3)-\zeta_1) = \frac8{\pi_1}\al_3\tilde{m}(\al_3), \]
with $\bar\Phi(x)=P[\cN(0,1)>x]$. Using Lemma \ref{lem-lb3}, which also gives the existence of $\zeta_3$, one gets
\[  \frac{m_1(\zeta_1,\al_3)}{\tilde{m}(\al_3)} \ge
\frac{\frac14 \beta(\zeta_3)\bar\Phi(\zeta_3-\zeta_1)}{\tilde{m}(\al_3)}
= \frac1{4\al_3}\frac{8\al_3 \tilde{m}(\al_3)}{\pi_1\tilde{m}(\al_3)} =\frac{2}{\pi_1}.   \]
This shows, reasoning as above, that $\al_3\le \al_2$, that is $\zeta_2\le \zeta_3$. Following \cite{js04}, one distinguishes two cases to further bound $\zeta_3$. 

If $\zeta_3>\zeta_1+1$, using $\zeta_2^2\le \zeta_3^2$ and $\tilde{m}(\al_3)\leqa \zeta_3 g(\zeta_3)$,
\begin{align*}
 \pi_1\zeta_2^2 & \le \zeta_3^2 \frac{8\al_3\tilde{m}(\al_3)}{\bar{\Phi}(\zeta_3-\zeta_1)} \leqa \zeta_3^3 \frac{g(\zeta_3)}{\beta(\zeta_3)} \frac{\zeta_3-\zeta_1}{\phi(\zeta_3-\zeta_1)}\\
 & \le C \zeta_3^4 \frac{\phi(\zeta_3)}{\phi(\zeta_3-\zeta_1)}=C\zeta_3^4\phi(\zeta_1)e^{-(\zeta_3-\zeta_1)\zeta_1}\\
 & \le C (\zeta_1+1)^4e^{-\zeta_1}\phi(\zeta_1),
 \end{align*}
 where for the last inequality we have used that $x\to x^4 e^{-(x-\zeta_1)\zeta_1}$ is decreasing for $x\ge \zeta_1+1$. Lemma \ref{lemz} now implies that 
 $\phi(\zeta_1)\leqa \eta_n$. As $\zeta_1$ goes to $\infty$ with $n/s_n$, one gets $\pi_1\zeta_2^2\leqa \eta_n$.

If $\zeta_1\le \zeta_3\le \zeta_1+1$, let $\zeta_4=\zeta(\al_4)$ with $\al_4$  solution in $\al$ of
\[ \bar\Phi(1) = 8\al\tilde{m}(\al) \pi_1^{-1}.\]
By the definition of $\zeta_3$, since $\bar\Phi(1) \le \bar\Phi(\zeta_3-\zeta_1)$, we have $8\al_4\tilde{m}(\al_4)\le 8\al_3\tilde{m}(\al_3)$ so that $\al_4\le \al_3$. Using Lemma \ref{lemmtilde} as before,   
\[ \bar\Phi(1) \leqa \frac{g(\zeta_4)}{\beta(\zeta_4)}\pi_1^{-1} 
\leqa \phi(\zeta_4)\pi_1^{-1}. \]
Taking logarithms this leads to
\[ \zeta_4^2\le C + 2\log(\pi_1^{-1}).\]
In particular, $\zeta_2^2\le 2\log(\pi_1^{-1})+C$. As $x\to x\log(1/x)$ is increasing, one gets, using  $\pi_1\le \eta_n$,
\[ \pi_1\zeta_2^2 \le 2\eta_n\log(1/\eta_n) + C\eta_n,\]
which concludes the verification of \eqref{zd}--\eqref{zd2} and the proof of Theorem \ref{thm-risk}.

In checking \eqref{zd2}, one needs a lower bound on $m_1$. In \cite{js04}, the authors mention that it follows from their lower bound (82), Lemma 8. But this bound cannot hold uniformly for any smoothing parameter $\al$ (denoted by $w$ in \cite{js04}), as $m_1(\mu,0)=-\tilde m(w)<0$ if $w\neq 0$. So, although the claimed inequality is correct, it does not seem to follow from (82). We state the inequality we use now, and prove it in Section \ref{sec-lemps}.

\begin{lem} \label{lem-lb3}
Let $\bar{\Phi}(t)=\int_t^\infty \phi(u)du$. For $\pi_1, \zeta_1$ as above, a solution $0<\al\le \al_1$ to the equation 
 \begin{equation} \label{wt}
 \bar{\Phi}(\zeta(\al)-\zeta_1) = 8\pi_1^{-1}\al\tilde{m}(\al). 
\end{equation}
exists. Let $\al_3$ be the largest such solution. Then for $c_0$ in \eqref{techsn} small enough, 
\begin{equation} \label{m1}
m_1(\zeta_1,\al_3) \ge \frac14 \beta(\ze_3)\bar\Phi(\al_3-\zeta_1).
\end{equation}
\end{lem}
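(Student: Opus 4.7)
My plan has three steps: establish existence of $\al_3$ via the intermediate value theorem, then lower-bound $m_1(\zeta_1,\al_3)$ by splitting the integral at $\zeta_3$ using pointwise monotonicity in the tail, and finally absorb the lower-tail loss using the defining equation of $\al_3$.

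For existence, set $F(\al):=\bar\Phi(\zeta(\al)-\zeta_1)-8\pi_1^{-1}\al\tilde m(\al)$ on $(0,\al_1]$. This is continuous since both $\zeta(\cdot)$ (the inverse of the continuous strictly increasing $\beta$ on $\RR^+$) and $\tilde m(\cdot)$ are. At $\al=\al_1$ we have $\zeta(\al_1)=\zeta_1$ and $\al_1\tilde m(\al_1)=\eta_n/d$ by \eqref{zeta1}, so $F(\al_1)=1/2-8\eta_n/(d\pi_1)$, which is negative for $d$ small since $\pi_1\le\eta_n$. As $\al\to 0^+$, $\zeta:=\zeta(\al)\to\infty$; Lemma \ref{lemmtilde} together with $\al g(\zeta)\asymp\phi(\zeta)$ (from $\beta(\zeta)=1/\al$) yields $\al\tilde m(\al)\asymp\zeta\phi(\zeta)$, while Mills' ratio gives $\bar\Phi(\zeta-\zeta_1)\asymp\phi(\zeta)e^{\zeta\zeta_1-\zeta_1^2/2}/\zeta$. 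The ratio of the two terms in $F$ then tends to $+\infty$, so $F(\al)>0$ for $\al$ small. The IVT yields a zero in $(0,\al_1)$; define $\al_3$ as the largest such.

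Write $m_1(\zeta_1,\al_3)=I_++I_-$ with $I_+=\int_{\zeta_3}^{\infty}\beta(t,\al_3)\phi(t-\zeta_1)dt$. On $[\zeta_3,\infty)$, monotonicity of $\beta$ on $\RR^+$ (Lemma~1 in \cite{js04}) gives $\beta(t)\ge\beta(\zeta_3)=1/\al_3$, hence $\al_3\beta(t)\ge 1$ and $\beta(t,\al_3)=\beta(t)/(1+\al_3\beta(t))\ge 1/(2\al_3)=\beta(\zeta_3)/2$, so $I_+\ge\frac{1}{2}\beta(\zeta_3)\bar\Phi(\zeta_3-\zeta_1)$. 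For $I_-$, the even function $\beta$ has a unique positive zero $t^*$ since $\beta(0)<0$ for the Cauchy slab, and $\beta(t,\al_3)$ has the same sign as $\beta(t)$ (because $1+\al_3\beta(t)>0$ always). Dropping the nonnegative contributions from $\{t^*\le|t|<\zeta_3\}$, on $[-t^*,t^*]$ one has $|\beta(t,\al_3)|=|\beta(t)|/(1-\al_3|\beta(t)|)\le|\beta(0)|/(1-\al_3|\beta(0)|)\le C_0$ for an absolute constant $C_0$. For $n$ large, $\zeta_1>t^*$, and $\int_{-t^*}^{t^*}\phi(t-\zeta_1)dt\le\bar\Phi(\zeta_1-t^*)$, giving $I_-\ge -C_0\bar\Phi(\zeta_1-t^*)$.

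It remains to show $C_0\bar\Phi(\zeta_1-t^*)\le\frac{1}{4}\beta(\zeta_3)\bar\Phi(\zeta_3-\zeta_1)=2\pi_1^{-1}\tilde m(\al_3)$, the last equality using the defining equation of $\al_3$. This absorption step is the main obstacle, and I would handle it by splitting on the size of $\zeta_3-\zeta_1$, mirroring the analysis in the proof of Theorem \ref{thm-risk}. If $\zeta_3\le\zeta_1+1$, then $\bar\Phi(\zeta_3-\zeta_1)\ge\bar\Phi(1)$ is bounded below and $\al_3\le\al_1\asymp\eta_n\zeta_1$, so the required bound reduces to $\eta_n\zeta_1\bar\Phi(\zeta_1-t^*)\to 0$, which holds by Lemma \ref{lemz}. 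If $\zeta_3>\zeta_1+1$, then for the Cauchy slab $\tilde m(\al_3)\asymp\zeta_3 g(\zeta_3)\asymp 1/\zeta_3$; combining $\pi_1\le\eta_n\asymp\phi(\zeta_1)/\zeta_1$ (Lemma \ref{lemz}) with $\bar\Phi(\zeta_1-t^*)\asymp\phi(\zeta_1-t^*)/\zeta_1$, the required inequality reduces to $\zeta_3\leqa\zeta_1^2 e^{\zeta_1^2-t^*\zeta_1}$, which holds for $c_0$ in \eqref{techsn} small enough since $\zeta_3\leqa\sqrt{\log n}$ while the right-hand side grows like a power of $n/s_n$.
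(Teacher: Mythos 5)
Your overall structure — split $m_1$ at the pseudo-threshold, lower-bound the upper tail by $\tfrac12\beta(\zeta_3)\bar\Phi(\zeta_3-\zeta_1)$ using $\al_3\beta(t)\ge 1$ there, bound the loss from the negative part of $\beta$ by a constant times a Gaussian tail probability, then absorb using the defining equation of $\al_3$ — is exactly the paper's. The existence argument via the IVT is also the same (the paper phrases it via $R_\al=\bar\Phi(\zeta(\al)-\zeta_1)/(\al\tilde m(\al))$, which is your $F$ divided and rearranged).

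The problem is in the absorption step for the case $\zeta_3>\zeta_1+1$. There are two compounding issues. First, a wrong asymptotic: $\eta_n\asymp\zeta_1\phi(\zeta_1)$ (from Lemma \ref{lemz} with $\kappa=2$), not $\phi(\zeta_1)/\zeta_1$ as you wrote. Second, and more importantly, you replace $\pi_1$ by $\eta_n$ before analysing the defining equation of $\al_3$. This gives the sufficient condition $\zeta_3\leqa e^{\zeta_1^2-t^*\zeta_1}$ (your version has a harmless extra $\zeta_1^2$ coming from the first error), and you dismiss it by asserting $\zeta_3\leqa\sqrt{\log n}$. But $\zeta_3$ is a deterministic quantity defined implicitly via $\al_3$, not the empirical threshold $\hat\zeta$, and there is no a priori reason it is $O(\sqrt{\log n})$; indeed for $\pi_1\approx 1/n$ one can have $\zeta_3\asymp\log n/\zeta_1$, and when $s_n$ is close to $c_0 n$ the reduced inequality can actually fail. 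The paper instead keeps $\pi_1$ in play and extracts from the equation \eqref{wt}, via $\bar\Phi(t)\geqa\phi(t)/t$ and $\phi(\zeta_3-\zeta_1)=\sqrt{2\pi}\phi(\zeta_3)\phi(\zeta_1)e^{\zeta_1\zeta_3}$ together with $\phi(\zeta_3)\asymp\al_3 g(\zeta_3)$, the bound $\pi_1\zeta_3\leqa\zeta_1$; this product bound is what the inequality $C_0\bar\Phi(\zeta_1-t^*)\le 2\pi_1^{-1}\tilde m(\al_3)$ actually requires. You need to redo this step along those lines — the defining equation of $\al_3$ must be used to bound $\pi_1\zeta_3$, not $\zeta_3$ alone.
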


\subsection{Proof of Theorem \ref{negres2}} \label{prn2}
Let $\te_0, \al^*, \zeta^*$ be defined as in the proof of Theorem \ref{negres}. 
Below we  show that the event $\cA = \{\hat\al\in[\al^*,c\al^*]\}$, for $c$ a large enough constant, has probability going to $1$, faster than a polynomial in $1/n$.  Recall from the proof of Theorem \ref{negres} that, if $\hat\al\ge \al^*$, so in particular on $\cA$, we have $V_X\ge (n-s_n)\al^*\ge n\al^*/2 \ge C_1 s_n g(\zeta^*)^{-1}$. 
Denote
\begin{align*}
v_n & = m s_n g(\zeta^*)^{-1}\\
V_X & = \int \|\te-\te_0\|^2 d\Pi_{\hat\al}(\te\given X),
\end{align*}
where $m$ is chosen small enough so that $v_n\le V_X/2$ on $\cA$. Then,
\begin{align*}
\lefteqn{\Pi_{\hat\al}\left[\|\te-\te_0\|^2 < v_n \given X\right] \1_{\cA} = \Pi_{\hat\al}\left[\|\te-\te_0\|^2 - V_X < v_n -V_X\given X\right] \1_{\cA}}&& \\
& \le \Pi_{\hat\al}\left[\|\te-\te_0\|^2 - V_X < -V_X/2\given X\right] \le 4V_X^{-2} \int \{ \|\te-\te_0\|^2 - V_X \}^2 d\Pi_{\hat\al}(\te \given X), 
\end{align*}
where the second line follows from Markov's inequality.
One now writes the $L^2$--norm in the previous display as sum over coordinates and one expands the square, while noting that given $X$ the posterior $\Pi_{\hat\al}[\cdot\given X]$ makes the coordinates of $\theta$ independent
\begin{align*}
\lefteqn{\int \{ \|\te-\te_0\|^2 - V_X \}^2 d\Pi_{\hat\al}(\te \given X)}&&\\
& = \int \sum_{i,j} \left[(\te_i-\te_{0,i})^2 - \int (\te_i-\te_{0,i})^2 d\Pi_{\hat\al}(\te \given X)\right]\left[(\te_j-\te_{0,j})^2 - \int (\te_j-\te_{0,j})^2 d\Pi_{\hat\al}(\te \given X)\right] d\Pi_{\hat\al}(\te \given X)\\
& =  \sum_{i=1}^n \int \left[(\te_i-\te_{0,i})^2 - \int (\te_i-\te_{0,i})^2 d\Pi_{\hat\al}(\te \given X)\right]^2 d\Pi_{\hat\al}(\te \given X) 
\le \sum_{i=1}^n \int (\te_i-\te_{0,i})^4 d\Pi_{\hat\al}(\te \given X).
\end{align*}
The last bound is the same as in the proof of the upper bound Theorem \ref{thm-risk}, except the fourth moment replaces the second moment. Denote $r_4(\al,\mu,x) = \int (u-\mu)^4 d\pi_\al(u\given x)$, then 
\begin{align*}
r_4(\al,\mu,x) = (1-a(x))\mu^4 + a(x)\int (u-\mu)^4\ga_x(u)du.
\end{align*}
In a similar way as in the proof of Lemma \ref{lemfia}, one obtains $\int (u-\mu)^4\ga_x(u)du\le C(1+(x-\mu)^4)$. Next,  noting that since now $\ga$ is Laplace so $g$ has Laplace tails, $x\to (1+x^4)g(x)$ is integrable, proceeding as in the 
 proof of Lemma \ref{lemfia}, one gets $E_0 r_4(\al,0,x)\leqa \al$ as well as $E_\mu r_4(\al,\mu,x) \leqa 1+ \tilde\ta(\al)^4$, for any fixed $\al$. Similarly as in Lemmas \ref{lemns}--\ref{lemsig}, one then derives the following random $\al$ bounds
 \[ E r_4(\hat\al,0,x) \leqa c\al^* + P(\hat\al>c\al^*)^{1/2}\]
 and, for any $\mu$,
 \[ E r_4(\hat\al,\mu,x) \leqa 1+\ta(\al^*)^4 + (1+\log^2 n)P(\hat\al<\al^*)^{1/2}.\]
By using that the probabilities in the last displays go to $0$ faster than $1/n$, which we show below, and gathering the bounds for all $i$,
\[ E_{\te_0}\sum_{i=1}^n \int (\te_i-\te_{0,i})^4 d\Pi_{\hat\al}(\te \given X)
\leqa s_n(1+\ta(\al^*)^4) + n\al^*. \]
From this deduce that
\begin{align*}
 E_{\te_0} \Pi_{\hat\al}\left[\|\te-\te_0\|^2 < v_n \given X\right] & \leqa P[\cA^c] +  [s_n(1+\ta(\al^*)^4) + n\al^*]/(s_n g(\zeta^*)^{-1})^2 \\
& \leqa P[\cA^c] + s_n^{-1}(1+\ta(\al^*)^4)g(\zeta^*) + s_n^{-1}g(\zeta^*). 
\end{align*} 
The last bound goes to $0$, as $\ta(\al^*)\le \zeta_{\al^*}=\zeta^*$ and $g$ has Laplace tails. To conclude the proof, we show that  $P_{\te_0}(\hat\al\in[\al^*,c\al^*])$ is small.  From the proof of Theorem \ref{negres}, one already has $P_{\te_0}[\hat\al <\al^*]\le \exp(-cs_n)$, which is a $o(1/n)$ using $s_n\geqa \log^2{n}$. To obtain a bound on $P_{\te_0}[\hat\al >c\al^*]$, 
one can now revert the inequalities in the reasoning leading to the Bernstein bound in the proof of Theorem \ref{negres}. With $A=\sum_{i=1}^n m_1(\mu_i,\al)$, we have
\[ P_{\te_0}\left[\hat\al>c\al^*\right] = P_{\te_0}\left[ S(c\al^*)> 0 \right]
= P_{\te_0}\left[ \sum_{i=1}^n \be(\te_{0,i}+Z_i,c\al^*) - m_1(\te_{0,i},c\al^*) > -A \right]. \]
But here, $-A=(n-s_n)\tilde{m}(c\al^*)-s_nm_1(\zeta^*,c\al^*)$. As $\al\to\tilde{m}(\al)$ is increasing, $\tilde{m}(c\al^*)\ge \tilde{m}(\al^*)$. Now by Lemma \ref{lembeta}, 
\[ m_1(\zeta^*,c\al^*) \le (c\al^*\wedge c_3)^{-1}\le  \frac{1}{c\al^*}, \]
provided $\al^*\le c_3/c=c_3/16$, which is the case for $\eta_n$ small enough.  Since by definition $n\tilde{m}(\al^*)=s_n/(4\al^*)$, we have $-A\ge s_n/(8\al^*)$. From there one can carry over the same scheme of proof as for the previous Bernstein inequality, with now $\tilde{A}=-A$ and $\tilde{V}$ the variance proxy 
which is bounded by
\[ \tilde{V} \le (n-s_n) m_2(0,c\al^*) + s_n m_2(\zeta^*,c\al^*)
\leqa n\frac{\tilde{m}(c\al^*)}{\zeta_{c\al^*} c\al^*} + \frac{s_n}{(c\al^*)^2}. \]
Now  $\tilde{m}(c\al^*)\leqa Cg(\zeta_{c\al^*})$. Using bounds similar to those of Lemma \ref{lemz}, one can check that $C_1+\zeta_{\al^*}^2\le \zeta_{c\al^*}^2\le C_2+ \zeta_{\al^*}^2$, which implies that $\tilde{m}(c\al^*)/\zeta_{c\al^*}\leqa  \tilde{m}(\al^*)/\zeta^*\leqa \tilde{m}(\al^*)$. From this one deduces, with $\tilde{M}\le C/s_n$,
\[ \frac{\tilde{V}}{\tilde{A}^2} + \frac{\tilde{M} \tilde{A}}{3\tilde{A}^2} \leqa \frac{C'}{s_n},\]
 which by Bernstein's inequality implies $P_{\te_0}\left[\hat\al>c\al^*\right] \le \exp[-Cs_n]$, which completes the proof of Theorem \ref{negres2}.


\section{Technical lemmas for the SAS prior} \label{sec-tec}

\subsection{Proofs of posterior risk bounds: fixed $\al$}

\begin{proof}[Proof of Lemma \ref{lemfia}]
First one proves the first two bounds. To do so, we
 derive moment bounds on $\ga_x$. Since $\ga_x(\cdot)$ is a density function, we have for any $x$, 
$\int \ga_x(u) du = 1$. This implies 
$ (\log g)'(x)=\int (u-x) \ga_x(u) du = \int u\ga_x(u)du-x$.  
 In \cite{js04}, the authors check, see p. 1623,  that $\int u\ga_x(u)du=:\tilde{m}_1(x)$ is a shrinkage rule, that is $0\le \tilde{m}_1(x)\le x$ for $x\ge 0$, so by symmetry, for any real $x$,
\[ |\int u \ga_x(u) du | \le |x|. \]
Decomposing $u^2 = (u-x)^2+2x(u-x)+x^2$ and noting that 
$\int (u-x)^2 \ga_x(u)du=g''(x)/g(x)+1$, 
\begin{align*}
\int u^2 \ga_x(u) du = \frac{g''}{g}(x) + 1 + 2x \frac{g'}{g}(x)+x^2.
\end{align*}
Note that for $\gamma$ Laplace or Cauchy, we have $|\ga'|\le c_1\ga$ and $|\ga''| \le c_2 \ga$. This leads to 
\[ |g'(x)| = |\int \ga'(x-u)\phi(u)du| \le c_1\int \ga(x-u)\phi(u)du=c_1g(x) \]
and similarly $|g''|\le c_2 g$, 
so that $\int u^2 \ga_x(u) du \le C(1+x^2)$ which gives the first bound using \eqref{sima}. 
We note, {\em en passant}, that the one but last display also implies for any real $x$ that 
\begin{equation} \label{teclb}
\int u^2\ga_x(u)du \ge 1-c_2-2c_1|x|+x^2, 
\end{equation}
which implies that $\int u^2\ga_x(u)du$ goes to $\infty$ with $x$. 
Also, for any real $\mu$, 
\[ \int (u-\mu)^2\ga_x(u)du = (x-\mu)^2 
+  \frac{g''}{g}(x) + 1+ 2(x-\mu) \frac{g'}{g}(x).\]
Now using again $g'/g\le c_1$ and $g''/g\le c_2$ leads to
\[\int (u-\mu)^2\ga_x(u)du  \le C(1+(x-\mu)^2). \]
By using the expression of $r_2(\al,\mu,x)$, this yields the second bound of the lemma.

We now turn to the bounds in expectation. 
 For a zero signal $\mu=0$,  one notes that $x=\ta(\al)$ is the value at which both terms in the minimum in the first inequality of the lemma are equal. So 
\[ E_{0} r_2(\al,0,x) \leqa \int \1_{|x|\le \ta(\al)} \frac{\al}{1-\al} \frac{g}{\phi}(x)\phi(x) (1+x^2)dx
 + \int \1_{|x|> \ta(\al)} (1+x^2)\phi(x)dx.\]
For $\ga$ Cauchy, $g$ has Cauchy tails and $x\to (1+x^2)g(x)$ is bounded, so one gets, with $\al\le 1/2$,
\begin{align*}
E_{0} r_2(\al,0,x) & \leqa \al \int  \1_{|x|\le \ta(\al)} dx + \ta(\al)\phi(\ta(\al))+ 
\phi(\ta(\al))/\ta(\al) \\
& \leqa \ta(\al)\al+ \ta(\al)\phi(\ta(\al)) \leqa \ta(\al)\al+ \ta(\al)\al g(\ta(\al)) \leqa \ta(\al)\al.
\end{align*} 
Turning to the last bound of the lemma, we distinguish two cases. Set for the remaining of the proof  $T:=\tilde\ta(\al)$ for simplicity of notation. The first case is $|\mu|\le 4T$, for which
\[ E_\mu r_2(\al,\mu,x) \le \mu^2 + C \le C_1(1+T^2). \]
The second case is $|\mu|>4T$. 
We bound the expectation of each term in the second bound of the lemma (that for $r_2(\al,\mu,x)$) separately. First, $E[a(x)(1+(x-\mu)^2)]\le C$. It thus suffices to bound $\mu^2E_\mu[1-a(x)]$. To do so, one uses the bound \eqref{postwb} and starts by noting that, if $Z\sim\cN(0,1)$,
\[ E [\1_{|Z+\mu|\le T}] \le P[|Z|\ge |\mu| - T] \le P[|Z|\ge |\mu|/2]. \]
This implies, with $\bar{\Phi}(u)=\int_u^\infty \phi(t)dt\le \phi(u)/u$ for $u>0$, 
\[ E_\mu[ \mu^2 \1_{|x|\le T}] \le C_2|\mu|\phi(|\mu|)\le C_3.\]
If $A=\{ x,\ |x-\mu|\le |\mu|/2 \}$ and $A^c$ denotes its complement,
\[ \sqrt{2\pi}E_\mu[e^{-\frac12 (|x|-T)^2}]\le \int_{A^c} e^{-\frac12(x-\mu)^2} dx
+ \int_A e^{-\frac12 (|x|-T)^2} dx.\]
The first term in the last sum is bounded above by $2\bar{\Phi}(|\mu|/2)$. The second term, as $A\subset\{x,\ |x|\ge |\mu|/2\}$, is bounded above by $2\bar{\Phi}(|\mu|/4)$. This implies, in the case $|\mu|>4T$, that
\[   E_\mu r_2(\al,\mu,x) \le C_4+4\mu^2\bar{\Phi}(|\mu|/4)+5\le C.\]
The last bound of the lemma follows by combining the previous bounds in the two cases. 
\end{proof}

\begin{proof}[Proof of Lemma \ref{lemlb}]
From the expression of $r_2(\al,0,x)$ it follows
\begin{align*}
 r_2(\al,0,x) & \ge a(x) \inf_{x\in\RR} \int u^2 \ga_x(u)du
  \ge \al \frac{g}{\phi \vee g}(x) \inf_{x\in\RR} \int u^2 \ga_x(u)du\\
 & \ge \al \inf_{x\in\RR} \frac{g}{\phi \vee g}(x) \inf_{x\in\RR} \int u^2 \ga_x(u)du  \ge C_0 \al,
\end{align*}
where $c_0>0$. Indeed, both functions whose infimum is taken in the last display are continuous in $x$, are strictly positive for any real $x$, and have respective limits $1$ and $+\infty$ as  $|x| \to \infty$, using \eqref{teclb}, so these functions are bounded below on $\RR$ by positive constants.
\end{proof}

\subsection{Proofs of posterior risk bounds: random $\al$} \label{sec-rab}

\begin{proof}[Proof of Lemma \ref{lemns}]
Using the bound on $r_2(\al,0,x)$ from Lemma \ref{lemfia},
\begin{align*}
r_2(\hat\al,0,x) & = r_2(\hat\al,0,x) \1_{\hat\al\le \al} +  r_2(\hat\al,0,x) \1_{\hat\al >\al} \\
& \le \left[ \frac{\hat \al}{1-\hat \al} \frac{g}{\phi}(x) \wedge 1\right] (1+x^2)\1_{\hat\al\le \al} + C(1+x^2)\1_{\hat\al >\al} \\
& \le \left[ \frac{ \al}{1- \al} \frac{g}{\phi}(x) \wedge 1\right] (1+x^2)\1_{\hat\al\le \al} + C(1+x^2)\1_{\hat\al >\al}.
\end{align*}
For the first term in the last display, one bounds the indicator from above by $1$ and proceeds as in the proof of Lemma \ref{lemfia} to bound its expectation by $C\al\tilde \ta(\al)$. 
The first part of the lemma follows by noting that $E[(1+x^2)\1_{\hat\al >\al}]$ is bounded from above by 
$(2+2E_0[x^4])^{1/2} P(\hat\al>\al)^{1/2}\le C_1 P(\hat\al>\al)^{1/2}$ by Cauchy-Schwarz inequality. The second part of the lemma follows from the fact that using Lemma \ref{lemfia}, 
$r_2(\al,\mu,x)  \le (1-a(x))\mu^2 +C a(x)((x-\mu)^2+1)\le \mu^2 +C(x-\mu)^2+C$ for any $\al$.

\end{proof}

\begin{proof}[Proof of Lemma \ref{lemsig}]
Combining \eqref{postwb} and the third bound of Lemma \ref{lemfia}, 
\[ r_2(\hal,\mu,x)  \le \mu^2 \left[\1_{|x|\le \tilde\ta(\hal)} 
 +   e^{-\frac12 (|x|-\tilde\ta(\hal))^2}\1_{|x| > \tilde\ta(\hal)} \right] + C((x-\mu)^2+1).
\]
Note that it is enough to bound the first term on the right hand side in the last display, as the last one is bounded by a constant under $E_\mu$. Let us distinguish the two cases $\hal\ge \al$ and $\hal<\al$.  

In the case $\hal\ge \al$, as $\tilde\ta(\al)$ is a decreasing function of $\alpha$, 
\begin{align*}
& \lefteqn{\left[\1_{|x|\le \tilde\ta(\hal)} 
 +   e^{-\frac12 (|x|-\tilde\ta(\hal))^2}\1_{|x| > \tilde\ta(\hal)} \right] \1_{\hal\ge \al}}\\
 & \le \left[\1_{|x|\le \tilde\ta(\hal)} + \1_{\tilde\ta(\hal)< |x|\le \tilde\ta(\al)} + 
 e^{-\frac12 (|x|-\tilde\ta(\hal))^2}\1_{|x| > \tilde\ta(\al)} \right] \1_{\hal\ge \al}\\
 & \le \1_{|x|\le \tilde\ta(\al)} + e^{-\frac12 (|x|-\tilde\ta(\al))^2} \1_{|x| > \tilde\ta(\al)},
\end{align*}
where we have used $e^{-\frac12 v^2}\le 1$ for any $v$ and that $e^{-\frac12 (u-c)^2}\le e^{-\frac12 (u-d)^2}$ if $u>d\ge c$. As a consequence, one can borrow the fixed $\alpha$ bound obtained previously so that 
\[ E \left[ r_2(\hal,\mu,x)1_{\hal\ge \al}\right] \le 2 E_\mu r_2(\al,\mu,x) 
\le C\left[1 + \tilde\ta(\al)^2 \right].\]

In the case $\hal < \al$, setting $b_n=\sqrt{d\log n}$ and noting that $\tilde\ta(\hal)\le b_n$ with probability $1$ by assumption, proceeding as above, with $b_n$ now replacing $\tilde\ta(\al)$, one can bound 
\begin{align*}
& \lefteqn{\1_{|x|\le \tilde\ta(\hal)} 
 +   e^{-\frac12 (|x|-\tilde\ta(\hal))^2}\1_{|x| > \tilde\ta(\hal)}}\\
 & \le \1_{|x|\le b_n} + e^{-\frac12 (|x|-b_n)^2} \1_{|x| > b_n}.
\end{align*}
From this one deduces that 
\begin{align*}
&\lefteqn{ E \left( \mu^2\left[\1_{|x|\le \tilde\ta(\hal)} 
 +   e^{-\frac12 (|x|-\tilde\ta(\hal))^2}\1_{|x| > \tilde\ta(\hal)} \right] \1_{\hal<\al}\right) }\\
& \le C \left( E_\mu\left[\mu^4\1_{|x|\le b_n} + \mu^4e^{-(|x|-b_n)^2}\right] \right)^{1/2} P(\hal <\al)^{1/2}.
\end{align*} 
Using similar bounds as in the fixed $\alpha$ case, one obtains
\[ E_\mu\left[\mu^4\1_{|x|\le b_n} + \mu^4e^{-(|x|-b_n)^2} \right] \le C(1+b_n^4). \]
Taking the square root and gathering the different bounds obtained concludes the proof.
\end{proof}

\subsection{Proofs on pseudo-thresholds} \label{sec-lemps}

\begin{proof}[Proof of Lemma \ref{lemz}]
For small $\al$, or equivalently large $\zeta$, we have $(g/\phi)(\zeta)=\be(\zeta)+1 \asymp \beta(\zeta)$. Deduce  that for large $n$, using $\eta_n=d\al_1\tilde{m}(\al_1)$ and
 Lemma \ref{lemmtilde} on $\tilde{m}$,
\[ \eta_n \asymp \al_1 \zeta_1^{\kappa-1} \frac{g(\zeta_1)}{\beta(\zeta_1)} \beta(\zeta_1)
\asymp \zeta_1^{\kappa-1}\phi(\zeta_1) \asymp \zeta_1^{\kappa-1}e^{-\zeta_1^2/2}.\]
From this deduce that
\[ |\log{c} + (\kappa-1)\log{\zeta_1}-\frac{\zeta_1^2}{2}
+\log(1/\eta_n)| \le C.
\]
In particular, using $\log\zeta\le a+\zeta^2/4$ for some constant $a>0$ large enough, one gets $\zeta_1^2\le 4(C+\log(1/\eta_n))\le 4(C+\log{n})$. Inserting this back into the previous inequality leads to
\[ \zeta_1^2/2 \le \log(1/\eta_n)+C + (1/2)(\kappa-1)\log\log{n}. \]
The lower bound is obtained by bounding $(\kappa-1)\log(\zeta_1)\ge 0$, for small enough $\al_1$. 
\end{proof}

\begin{proof}[Proof of Lemma \ref{lemzetaunder}]
Using \eqref{techsn}, $\log(1/\eta_n)\le \log(n)-2\log\log{n}$, and the bound on $\zeta$ from Lemma \ref{lemz}  gives  $\zeta_1^2\le 2\log{n}-\frac{3}{2}\log\log{n}$, so that $t(\al_1)\le \zeta(\al_1)=\zeta_1\le \sqrt{2\log{n}}=t(\al_n)$. It follows that $\al_1$ belongs to the interval $[\al_n,1]$ over which the likelihood is maximised. 

Then one notices  that $\{\hat\zeta<\zeta_1\}=\{\hat \al>\al_1\}=\{S(\al_1)>0\}$, regardless of the fact that the maximiser $\hat\al$ is attained in the interior or at the boundary of $[\al_n,1]$. So
\[ P_\te[\hat\zeta < \zeta_1] = P_\te[S(\al_1)>0].  \]

The score function equals $S(\al)=\sum_{i=1}^n \beta(X_i,\al)$, a sum of independent variables. By Bernstein's inequality, 
if $W_i$ are centered independent variables with $|W_i|\le M$ and $\sum_{i=1}^n\text{Var}(W_i)\le V$, then for any $A>0$,
\[ P\left[ \sum_{i=1}^n W_i >A \right] \le \exp\{-\frac12 A^2/(V+\frac13 MA) \}.\]
Set $W_i=\beta(X_i,\al_1)-m_1(\te_{0,i},\al_1)$ and $A = -\sum_{i=1}^n m_1(\te_{0,i},\al_1)$. Then one can take $M=c_3/\al_1$, using Lemma \ref{lembeta}. 
One can bound $-A$ from above as follows, using the definition of $\al_1$,
\begin{align*}
 -A & \le -\sum_{i\notin S_0} \tilde m(\al_1) + \sum_{i\in S_0} \frac{c}{\al_1} 
  \le -(n-s_n) \tilde m(\al_1) + cs_n/\al_1 \\
 & \le -n \tilde m(\al_1)/2 + cdn\tilde{m}(\al_1)  \le -n \tilde m(\al_1)/4,
\end{align*} 
provided $d$ is chosen small enough and, using again  the definition of $\al_1$,
\begin{align*}
 V & \le \sum_{i\notin S_0} m_2(0,\al_1)  
 + \sum_{i\in S_0} m_2(\te_{0,i},\al_1) 
  \le \frac{C}{\al_1}\left[ (n-s_n) \tilde{m}(\al_1)\zeta_1^{-\kappa} + cs_n/\al_1\right] \\
 & \le C\al_1^{-1}\left[ n\tilde{m}(\al_1)\zeta_1^{-\kappa}/2 + c d n\tilde{m}(\al_1)\right] 
  \le C' d n \tilde{m}(\al_1)/\al_1,
\end{align*} 
where one uses that $\zeta_1^{-1}$ is bounded. 
This leads to 
\[ \frac{V+\frac13 MA}{A^2} \le \frac{C'd}{n \al_1 \tilde m(\al_1)} + \frac{4c_3}{3n \al_1 \tilde m(\al_1)}\le \frac{c_5^{-1}}{n \al_1 \tilde m(\al_1)}.\]
One concludes that $P\left[ \hat \al >\al_1 \right] \le \exp\{ - c_5 n\al_1\tilde{m}(\al_1) \}= \exp\{ - Cs_n \}$ using \eqref{zeta1}.
\end{proof}

\begin{proof}[Proof of Lemma \ref{lem-lb3}]
First we check the existence of a solution. Set $\zeta_\al=\zeta(\al)$ and $R_\al:= \bar\Phi(\zeta_\al-\zeta_1)/(\al \tilde{m}(\al))$. For $\al\to 0$ we have $\zeta_\al-\zeta_1\to\infty$ so by using $\bar\Phi(u)\asymp \phi(u)/u$ as $u\to\infty$ one gets, treating terms depending on $\zeta_1$ as constants and using $\phi(\zeta_\al)\asymp \al g(\zeta_\al)$,
\[ \bar\Phi(\zeta_\al-\zeta_1) \asymp \frac{\phi(\zeta_{\al}-\zeta_1)}{\zeta_\al-\zeta_1} \asymp \al g(\zeta_\al) e^{\zeta_\al\zeta_1}.\] 
As $\tilde{m}(\al)\asymp \zeta_\al g(\zeta_\al)$, one gets $R_\al\asymp e^{\zeta_\al\zeta_1}/\zeta_\al\to \infty$ as $\al\to 0$. On the other hand, with $\pi_1\le s_n/n$  and $\al_1\tilde{m}(\al_1)=ds_n/n$,
\[ R_{\al_1} = \frac{1}{2\al_1\tilde{m}(\al_1)} = \frac{d n}{2s_n} 
\le \frac{8}{\pi_1} \frac{d}{16}, \]
so that $R_{\al_1}< 8/\pi_1$ as $d<2$. This shows that the equation at stake has at least one solution for $\al$ in the interval $(0,\al_1)$. 

By definition of $m_1(\mu,\al)$, for any $\mu$ and $\al$, and $\zeta=\zeta(\al)$,
\begin{align*}
m_1(\mu,\al) & = \int_{-\zeta}^{\zeta} \frac{\beta(x)}{1+\al\be(x)} \phi(x-\mu)dx\ +\ \int_{|x|>\zeta} \frac{\beta(x)}{1+\al\be(x)} \phi(x-\mu)dx\\
 & = \qquad \qquad (A) \qquad \qquad \qquad\ +\ \qquad \qquad \qquad  (B).
 \end{align*}
By definition of $\zeta$, the denominator in (B) is bounded from above by $2\al\be(x)$ so
\[ (B) \ge \frac1{2\al} \int_{|x|>\zeta} \phi(x-\mu)dx \geq \frac12\beta(\zeta)\bar\Phi(\zeta-\mu).\]
One splits the integral (A) in two parts corresponding to $\beta(x)\ge 0$ and $\beta(x)<0$. Let $c$ be the real number such that $g/\phi(c)=1$. By construction the part of the integral (A) with $c\le |x|\le \zeta$ is nonnegative, so, for $\al\le |\be(0)|^{-1}/2$,
\begin{align*}
(A) & \ge \int_{-c}^{c} \frac{\beta(x)}{1+\al\be(x)} \phi(x-\mu)dx\\
& \ge  -\int_{-c}^c \frac{|\be(0)|}{1-\al|\be(0)|} \phi(x-\mu)dx  \\
& \ge -2|\be(0)| \int_{-c}^c \phi(x-\mu)dx,
\end{align*}
where one uses the monotonicity of $y\to y/(1+\al y)$. For $\mu\ge c$, the  integral $\int_{-c}^c \phi(x-\mu)dx$ is bounded above by $2\int_0^c \phi(x-\mu)dx\le 2c\phi(\mu-c)$. To establish \eqref{m1}, it thus suffices to show that
\[ (i):=4|\be(0)| c\phi(\zeta_1-c) \le \frac14\beta(\zeta_3)\bar\Phi(\zeta_3-\zeta_1)=:(ii). \]
The right hand-side equals $2\tilde{m}(\al _3)/\pi_1$ by definition of $\zeta_3$. Since $\ga$ is Cauchy, Lemma \ref{lemmtilde} gives $\tilde{m}(\al _3)\asymp \zeta_3g(\zeta_3)\asymp \zeta_3^{-1}$. It is enough to show that $(\pi_1\zeta_3)^{-1}$ is larger  than $C\phi(\zeta_1-c)$, for suitably large $C>0$. 

Let us distinguish two cases. In the case $\zeta_3\le 2\zeta_1$, the previous claim is obtained, since $\zeta_1$ goes to infinity with $n/s_n$ by Lemma \ref{lemz} and $\phi(\zeta_1-c)=o(\zeta_1^{-1})$. 
In the case $\zeta_3> 2\zeta_1$, we obtain an upper bound on $\zeta_3$ by rewriting the equation defining it. For $t\ge 1$, one has $\bar\Phi(t)\ge C\phi(t)/t$. Since $\zeta_3-\zeta_1> \zeta_1$ in the present case, it follows from the equation defining $\zeta_3$ that
\[ C\frac{\phi(\zeta_3-\zeta_1)}{\zeta_3-\zeta_1} \le 8\al_3\tilde{m}(\al_3)/\pi_1. \]
This can be rewritten using $\phi(\zeta_3-\zeta_1)=\sqrt{2\pi}\phi(\zeta_3)\phi(\zeta_1)e^{\zeta_1\zeta_3}$, as well as $\phi(\zeta_3)=g(\zeta_3)\al_3/(1+\al_3)\geqa \al_3 g(\zeta_3)$ and $\tilde{m}(\al_3)\asymp\zeta_3g(\zeta_3)$. This leads to 
\[\frac{e^{\zeta_1\zeta_3}}{\zeta_3^2}\le \frac{C}{\pi_1}e^{\zeta_1^2/2}.\]
By using $e^x/x^2\ge Ce^{x/2}$ for $x\ge 1$ one obtains $\zeta_1^2e^{\zeta_1\zeta_3/2}\le  e^{\zeta_1^2/2}C/\pi_1$, that is, using $\zeta_1^2\ge 1$,
\[ \pi_1\zeta_3\le \pi_1\zeta_1 + \frac{\pi_1\log(C/\pi_1)}{\zeta_1}\le 
\pi_1\zeta_1 + C\le C'\zeta_1, \] 
using that $u\to u\log(1/u)$ is bounded on $(0,1)$. So the previous claim is also obtained in this case, as $\phi(\zeta_1-c)$ is small compared to $(C'\zeta_1)^{-1}$ for large $\zeta_1$.
\end{proof}

\subsection{Proof of the convergence rate for the modified estimator}

\begin{proof}[Proof of Theorem \ref{thm-risk-mod}]
The proof is overall in the same spirit as that of Theorem 2 in \cite{js04} and goes by 
distinguishing the two cases $s_n\ge \log^2{n}$ and $s_n<\log^2{n}$. The main  difference is that here we work with the full posterior distribution, and the risk bounds require  Lemmas \ref{lemfia}--\ref{lemsig},  that bound the posterior risk in various settings, as well as a result,   Lemma \ref{lem-random-mod} below, in the same vein.

Also, we need to work with a modified version of $\zeta_1$, to make sure that the probability in Lemma \ref{lemzetaunder} goes to 0 fast enough. We note that this version of $\zeta_1$ is the one used in \cite{js04} for both their Theorems 1 and 2 (in our Theorem \ref{thm-risk}, such a modification is not needed and we worked with the simpler version there). 
To do so, one replaces $\eta_n=s_n /n$ in the definition \eqref{zeta1} of $\alpha_1$ by 
\[ \tilde\eta_n=\max\left(\eta_n,\frac{\log^2{n}}{n}\right). \]
To keep notation simple, we still denote the corresponding threshold by $\zeta_1$. 
In the first part of the proof below, $\eta_n\ge \log^2(n)/n$, so this is the same version as in definition \eqref{zeta1}. In the second part of the proof, we have $\tilde\eta_n=\log^2{n}/n$ and we now indicate the relevant properties of the corresponding modified threshold $\zeta_1$. First, the statement of Lemma  \ref{lemz} becomes, with $\kappa=2$ (as $\ga$ is Cauchy),
\begin{equation} \label{modzet}
 \log(1/\tilde{\eta}_n) + c_1 \le \frac{\zeta_1^2}{2} \le
 \log(1/\tilde{\eta}_n) + \frac{1}{2}\log\log{n} + c_2.
\end{equation}
Second, we need below a bound on $P[\hat\zeta<\zeta_1]$ with the modified version of $\zeta_1$ as above. It is not hard to check from the proof of Lemma \ref{lemzetaunder} that this proof goes through with the new version of $\zeta_1$ and $\eta_n$ replaced by $\tilde\eta_n$. The only difference is with the term $cs_n/\al_1$ which is bounded by $cn\tilde{\eta}_n/\al_1=n\tilde{m}(\al_1)$, so that Bernstein's inequality gives 
\begin{equation} \label{dev-zet-mod}
P[\hat\zeta<\zeta_1] \le \exp\{-C'n\al_1\tilde{m}(\al_1)\} \le \exp\{-Cn\tilde{\eta}_n\}
\le e^{-C\log^2{n}}. 
\end{equation}

We are now ready for the proof of Theorem \ref{thm-risk-mod}. First consider the case $s_n\ge \log^2{n}$ and let us show that the risk of the empirical Bayes posterior $\Pi_{\hat\al_A}[\cdot\given X]$ is not larger than that of the non-modified one. One decomposes
\begin{align*}
&\lefteqn{E_{\te_0}\int \| \te -\te_0\|^2 d\Pi_{\hala}(\te\given X)}\\
& = E_{\te_0}\int \| \te -\te_0\|^2 d\Pi_{\hal}(\te\given X) 1_{\hat t\le t_n}
+ E_{\te_0}\int \| \te -\te_0\|^2 d\Pi_{\hala}(\te\given X) 1_{\hat t > t_n} \\
& \le E_{\te_0}\int \| \te -\te_0\|^2 d\Pi_{\hal}(\te\given X)
+ E_{\te_0}\int \| \te -\te_0\|^2 d\Pi_{\al_A}(\te\given X) 1_{\hat t > t_n}=(I)+(II).
\end{align*}
The  term (I) corresponds to the risk of the unmodified estimator, so is bounded as in Theorem \ref{thm-risk}. For  (II), one splits it according to small and large signals $\te_{0,i}$: $(II) = S + \tilde S$, with
\[ S = \sum_{i:\ |\te_{0,i}|\le \zeta_1} E_{\te_0}\int (\te_i-\te_{0,i})^2 d\Pi_{\al_A}(\te_i\given X)1_{\hat t > t_n},\]
and $\tilde S = (II) - S$. From Lemma \ref{lemfia}, one knows that $r_2(\al_A,\mu,x)\le \mu^2 + C(1+(x-\mu)^2)$, while for $\mu=0$, one can use the bound in expectation $E_0r_2(\al,0,x)\le C\al\ta(\al)$, so that 
\[ S \le \{\sum_{i:\ |\te_{0,i}|=0}+\sum_{i:\ 0<|\te_{0,i}|\le \zeta_1} \}E_{\te_0}\int (\te_i-\te_{0,i})^2  d\Pi_{\al_A}(\te_i\given X)
\le Cn\al_A\ta(\al_A) + Cs_n\zeta_1^2. \]
We now use the definition of $\al_A$ to bound $\al_A$ and $\ta(\al_A)$. To bound $\ta(\al_A)$, note that for any $\al\in(0,1)$, by definition $a(\ta(\al))=1/2$, so for a signal of amplitude $\ta(\al)$, the posterior puts $1/2$ of its mass at zero, which means the posterior median is $0$, implying $\ta(\al)\le t(\al)$, so that $\ta(\al_A)\le t_A$. 
Combining with the bound for $\al_A$ of Lemma \ref{lemala}, 
\[ n\al_A\ta(\al_A) \le Cn^{-A}t_A^3.\] 
For any fixed $A>0$, this goes to $0$ with $n$ so it is a $o(s_n\zeta_1^2)$, while $s_n\zeta_1^2$ is bounded by  $Cs_n\log(n/s_n)$ as follows from Lemma \ref{lemz}.
Now to bound $\tilde S$, one adapts the last bound of Lemma  \ref{lemfia} to accommodate for the indicator $1_{\hat t>t_n}$. This is done in Lemma \ref{lem-random-mod} whose bound \eqref{lemriskA} implies 
 $\tilde S\le Cs_n t_A^2
P(\hat t>t_n)^{1/2}$. This bound coincides up to a universal constant with the corresponding bound (128) in \cite{js04} (taken for $p=0$, $\tilde p=1$ and $q=2$,  which corresponds to our setting, i.e. working with $\ell_0$ classes and quadratic risk). So the remaining bounds of \cite{js04} for the case $s_n>c\log^2{n}$ can be used directly (the distinction of the three cases as in \cite{js04} p. 1646-1647 can be reproduced word by word, and is omitted for brevity), leading to $\tilde{S}\le Cs_n\log(n/s_n)$. 

Second, consider the case where $s_n\le \log^2{n}$. We note that for this regime of $s_n$, the inequalities \eqref{modzet} become, using that by definition $\tilde\eta_n=\log^2{n}/n$,
\begin{equation} \label{zet-small-s}
 \log{n}-2\log\log{n} + c_1 
 \le \frac{\zeta_1^2}{2} \le
 \log{n} - \frac{3}{2}\log\log{n} + c_2.
\end{equation}
Let us show that the risk of the plug-in posterior using the modified estimator is at most of the order of the minimax risk. For $\zeta_1$ as above,
\begin{align*}
&\lefteqn{E_{\te_0}\int \| \te - \te_0 \|^2 d\Pi_{\hala}(\te\given X)}\\
& =  
\Big[\sum_{i:\ \te_{0,i}=0} + \sum_{i:\ 0<|\te_{0,i}|\le \zeta_1} +
 \sum_{i:\ |\te_{0,i}| > \zeta_1}\Big] E_{\te_0}\int (\te_i-\te_{0,i})^2 d\Pi_{\hal_A}(\te_i\given X) =: (i) + (ii) + (iii).
\end{align*} 
For the terms (i) and (ii), apply respectively each bound of Lemma \ref{lemns}  with $\al=\al_A$ to get $(ii) \le Cs_n\left[ \zeta_1^2 + 1\right] \le C's_n\zeta_1^2\leqa s_n\log{n}$ using \eqref{modzet}, which is bounded from above by $Cs_n\log(n/s_n)$ in the regime $s_n\le \log^2{n}$. Also,
\[ (i) \le Cn\left[ \al_A\tilde\tau(\al_A) +  P[\hat\al_A >\al_A ]^{1/2}\right]. \]
For large enough $n$, we have $\tilde\tau(\al_A)=\tau(\al_A)$ which is less than $t(\al_A)=t_A$ as noted above. Now $\al_A$ is bounded using Lemma \ref{lemala}, so that $n\al_A\tilde{\ta}(\alpha_A)\leqa t_A(1+A)(\log n)n^{-A}=o(1)$ for $A>0$. 

We now bound the probability $P[ \hat\al_A >\al_A ]^{1/2}$. Recall the inequality $t(\al)^2\ge \zeta(\al)^2-C$ (see e.g. (53) in \cite{js04}). Using \eqref{zet-small-s}, we have $\zeta_1^2\ge 2\log{n}-4\log\log{n}+2c_1$ so, writing in slight abuse of notation $t(\zeta_1)=t(\al_1)$ seeing $t(\cdot)$ as a function of $\zeta_1$ instead of $\al_1$,
\[ t(\zeta_1)^2\ge t_n^2 +\log\log{n}-C+2c_1 \]
so that $t(\zeta_1)\ge t_n$ for $n$ large enough. Deduce $\{\hat \al_A>\al_A\}=\{\hat t < t_n\}\subset\{\hat t<t(\zeta_1)\}=\{\hat\zeta<\zeta_1\}$. Using \eqref{dev-zet-mod}, we have $P[\hat\zeta<\zeta_1]\le e^{-C\log^{2}{n}}$, so that $(i)$ goes to $0$, and so is a $o(s_n\log(n/s_n))$.

Finally, for the term (iii) one uses Lemma \ref{lemsig} with $\al=\al_A$. Note $\{\hat \al_A<\al_A\}=\{t(\hat \al_A)>t_A\}$. But by definition note that $t(\hat \al_A)$ equals either $t_A$ if $\hat t>t_n$ or $t(\hat\al)$ if $\hat t=t(\hat \al)\le t_n$, so that $t(\hat\al)\le t_n$. As $t_n^2<2\log{n}<t_A^2$ for $A>0$, conclude that in all cases $t(\hat\al_A)\le t_A$ with probability one, so that $P[\hat \al_A<\al_A]=0$. Thus
\[ (iii) \le \sum_{i:\ |\te_{0,i}| > \zeta_1} E_{\te_{0,i}} r_2(\hat\al_A,\te_{0,i},X_i)  \le C \sum_{i:\ |\te_{0,i}| > \zeta_1} (1+\tilde\ta(\al_A)^2+0)\le Cs_n\tilde\ta(\al_A)^2,
\]
which is no more than $2Cs_n(1+A)\log{n}\le C's_n\log{n}$. As $s_n\le c\log^2{n}$, we have $\log{n}\leqa \log(n/s_n)$ so 
$(iii)\le Cs_n\log(n/s_n)$. Putting the previous bounds together, one gets $(i)+(ii)+(iii)\le Cs_n\log(n/s_n)$, which concludes the proof. 
\end{proof}

\begin{lem} \label{lemala}
For $A\ge 0$, with $t_A^2=2(1+A)\log{n}$ and $\al_A=t^{-1}(t_A)$,  there exist $N_0>0$ and $C>0$ both independent of $A$ such that for $n\ge N_0$,
\[ \al_A \le C(1+A)(\log{n}) n^{-1-A}.\]
\end{lem}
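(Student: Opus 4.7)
The plan is to translate the bound $t(\al_A)=t_A$ into a bound on $\zeta_A:=\zeta(\al_A)$, and then to invert the relation $\beta(\zeta_A)=1/\al_A$ using the precise Cauchy-tail asymptotics of $\beta$. The first step is immediate: by Lemma~3 of \cite{js04} we have $t(\al)\le \zeta(\al)$ for every $\al$, so
\[
\zeta_A^2 \;\ge\; t_A^2 \;=\; 2(1+A)\log n.
\]
In particular $\zeta_A\to\infty$ uniformly in $A\ge 0$ as $n\to\infty$, which puts us in the regime where $\beta(\zeta)$ can be expanded.

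The next step is to establish, for the Cauchy slab, the asymptotic
\[
 \beta(\zeta) \;=\; \frac{g}{\phi}(\zeta)-1 \;\asymp\; \frac{e^{\zeta^2/2}}{\zeta^2}
 \qquad \text{as } \zeta\to\infty,
\]
with \emph{absolute} (in particular $A$-free) constants. This is where I expect the only real work: one writes $g=\phi\ast\ga$ with $\ga(u)=[\pi(1+u^2)]^{-1}$, splits the convolution integral around $u=\zeta/2$ (say), and uses that the Cauchy tail dominates the Gaussian one to obtain $g(\zeta)\sim \ga(\zeta)\sim 1/(\pi\zeta^2)$ as $\zeta\to\infty$. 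Combined with $\phi(\zeta)=e^{-\zeta^2/2}/\sqrt{2\pi}$, this gives $(g/\phi)(\zeta)\asymp \zeta^{-2}e^{\zeta^2/2}$ and, since the right-hand side tends to infinity, the $-1$ in the definition of $\beta$ is absorbed into the $\asymp$.

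Once the asymptotic is in hand, the definition $\beta(\zeta_A)=1/\al_A$ yields the matching bound $\al_A \leqa \zeta_A^2\, e^{-\zeta_A^2/2}$ with a universal constant. The final step is a monotonicity argument: the map $x\mapsto x e^{-x/2}$ has derivative $e^{-x/2}(1-x/2)$, hence is decreasing on $[2,\infty)$. For $n$ larger than an absolute $N_0$ one has $\zeta_A^2\ge 2(1+A)\log n \ge 2$ uniformly in $A\ge 0$, so
\[
\zeta_A^2\, e^{-\zeta_A^2/2} \;\le\; 2(1+A)(\log n)\, e^{-(1+A)\log n}
\;=\; 2(1+A)(\log n)\, n^{-(1+A)},
\]
and plugging this in gives $\al_A \le C(1+A)(\log n)\,n^{-1-A}$ with $C$ absolute, as claimed. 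The main (and essentially only) obstacle is the tail asymptotic for $\beta$; everything else is elementary comparison and monotonicity.
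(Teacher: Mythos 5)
Your proof is correct and rests on the same two ingredients as the paper's: the inequality $t(\al)\le\zeta(\al)$ (Lemma~3 of Johnstone--Silverman) and the Cauchy-tail asymptotic $\beta(\zeta)\asymp \zeta^{-2}e^{\zeta^2/2}$. The only difference is in the last step: the paper inverts the relation directly (applying the increasing function $\beta$ to both sides of $u<\zeta(t^{-1}(u))$) to obtain $\al_A<1/\beta(t_A)\leqa t_A^2e^{-t_A^2/2}$ in one stroke, whereas you pass through $\zeta_A\ge t_A$ and then invoke the monotonicity of $x\mapsto xe^{-x/2}$ on $[2,\infty)$ to transfer the bound from $\zeta_A$ back to $t_A$ — a slightly longer but equally valid route, and the constants remain absolute in both cases.
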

\begin{proof}
First recall the bound $t(\al)<\zeta(\al)$. Setting $\al=t^{-1}(u)$ in this inequality leads, using $\zeta(u)=\beta^{-1}(1/u)$, to $u<\beta^{-1}(1/t^{-1}(u))$. As $\beta$ is increasing on $\RR^+$, one has $t^{-1}(u)<1/\beta(u)$, so 
\[ \al_A<\frac{1}{\beta(t_A)}=\frac{g}{\phi-g}(t_A)\frac{\phi}{g}(t_A)\le 2\frac{\phi}{g}(t_A)\le Ct_A^2e^{-t_A^2}, \]
where we use that $g$ has Cauchy tails. The result follows by using the expression of $t_A$.
\end{proof}

\begin{lem} \label{lem-random-mod}
For any real $\mu$, for $B:=\{\hat t>t_n\}$, and $\al_A, t_A$ as above,
\begin{equation} \label{lemriskA}   
E_\mu[r_2(\al_A,\mu,x)\1_B] \le C(t_A^2+1)P(B)^{1/2}.
\end{equation}
\end{lem}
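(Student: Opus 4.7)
\medskip

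\noindent\textbf{Proof plan for Lemma \ref{lem-random-mod}.}

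The plan is to apply the Cauchy--Schwarz inequality to decouple the posterior risk from the event $B$, and then estimate the $L^2$--norm of $r_2(\al_A,\mu,x)$ viewed as a function of $x\sim\cN(\mu,1)$. Since $\al_A=t^{-1}(t_A)$ is deterministic, one writes
\[
E_\mu[r_2(\al_A,\mu,x)\1_B] \le \bigl(E_\mu r_2(\al_A,\mu,x)^2\bigr)^{1/2} P(B)^{1/2},
\]
so it suffices to prove $E_\mu r_2(\al_A,\mu,x)^2 \le C(1+t_A^2)^2$. Note that $\al_A\le 1/2$ for $n\ge N_0$ by Lemma \ref{lemala}, so the bounds of Lemma \ref{lemfia} apply.

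Next I would combine the pointwise bound on $r_2$ from the second inequality of Lemma \ref{lemfia} with the estimate \eqref{postwb} on $1-a(x)$: setting $T:=\tilde\ta(\al_A)$, this gives
\[
r_2(\al_A,\mu,x) \le \mu^2\bigl[\1_{|x|\le T} + e^{-\frac12(|x|-T)^2}\1_{|x|>T}\bigr] + C\bigl(1+(x-\mu)^2\bigr).
\]
Since the posterior median is zero at the point $\ta(\al)$ (where $a=1/2$), we have $\ta(\al)\le t(\al)$ for all $\al$, hence $T\le t(\al_A)=t_A$. Squaring the displayed inequality and using $(a+b)^2\le 2a^2+2b^2$, one obtains
\[
r_2(\al_A,\mu,x)^2 \le 2\mu^4\bigl[\1_{|x|\le T} + e^{-(|x|-T)^2}\1_{|x|>T}\bigr] + 2C^2\bigl(1+(x-\mu)^2\bigr)^2.
\]
Under $E_\mu$, the second summand has bounded expectation since $x-\mu\sim\cN(0,1)$.

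It remains to estimate the first summand, which is handled by splitting into the cases $|\mu|\le 4T$ and $|\mu|>4T$, exactly as in the proof of the last inequality of Lemma \ref{lemfia}. In the first case, $\mu^4 \le 256\, T^4 \le 256\, t_A^4$ and trivial bounds on the indicator and the exponential yield a contribution $\leqa t_A^4$. In the second case, one uses that $|x|\le T$ forces $|x-\mu|\ge |\mu|/2$, giving $E_\mu[\mu^4\1_{|x|\le T}]\le \mu^4\cdot 2\bar\Phi(|\mu|/2)$ which is bounded by a universal constant; and splitting the exponential integral over $A=\{|x-\mu|\le |\mu|/2\}$ (where $|x|\ge |\mu|/2\ge 2T$, so $|x|-T\ge |\mu|/4$) and $A^c$ (where the Gaussian density is tail--small), one likewise shows $E_\mu[\mu^4 e^{-(|x|-T)^2}\1_{|x|>T}]\leqa 1$. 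Combining the two cases gives $E_\mu r_2^2 \leqa 1 + t_A^4$, whence $(E_\mu r_2^2)^{1/2}\leqa 1 + t_A^2$, and \eqref{lemriskA} follows.

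The argument is essentially routine given the machinery already assembled; the only slightly delicate point is the large--$\mu$ case, which must be dealt with via the Gaussian tail estimate as above since otherwise the $\mu^4$ factor would be uncontrolled. Everything else is a direct consequence of the existing fixed--$\al$ bounds.
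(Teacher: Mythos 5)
Your proof is correct and follows essentially the same route as the paper's: the same pointwise bound on $r_2$ via \eqref{postwb}, the same $T:=\tilde\ta(\al_A)\le t_A$ observation, and the same two-case split $|\mu|\le 4T$ versus $|\mu|>4T$ with Gaussian tail estimates. The only (cosmetic) difference is that you apply Cauchy--Schwarz once globally to reduce to bounding $E_\mu r_2^2$, whereas the paper applies it term-by-term; both lead to the same $C(1+t_A^2)P(B)^{1/2}$ bound.
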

\begin{proof}
Similar to the proof of Lemma \ref{lemfia}, one sets $T:=\ta(\al_A)$ and distinguishes two cases: if $|\mu|\le 4T$,  Lemma \ref{lemfia} implies $r_2(\al_A,\mu,x)\le \mu^2+(1+(x-\mu)^2)$, so 
using Cauchy-Schwarz inequality,
\[ E_\mu[r_2(\al_A,\mu,x)\1_B] \le
CT^2 P(B) + P(B) + E_\mu[(x-\mu)^4]^{1/2}P(B)^{1/2}\le C(1+T^2)P(B)^{1/2}.
\]
If $|\mu|>4T$, one uses the bound on $r_2$ from Lemma \ref{lemfia} again keeping the dependence in $a(x)$. First,
\[ E[a(x)\{1+(x-\mu)^2\}\1_B] \le E[\{1+(x-\mu)^2\}^2]^{1/2}P(B)^{1/2}
\le CP(B)^{1/2}. \]
Let us now focus on $E_\mu[(1-a(x))\mu^2\1_B]\le
E_\mu[\{1_{|x|\le T} + e^{-(|x|-T)^2/2}1_{|x| > T}\}\1_B ] $. 
The first term, using $P_\mu[|x|<T]\le \bar\Phi(|\mu|/2)$, is bounded by 
$\mu^2\Phi(|\mu|/2)^{1/2}P(B)^{1/2}\le CP(B)^{1/2}$. The second term is bounded by $\mu^2\{E_\mu[e^{(-|x|-T)^2}]\}^{1/2}P(B)^{1/2}$. In the proof of Lemma \ref{lemfia}, we showed that $E_\mu[e^{(-|x|-T)^2/2}]^{1/2}$ is bounded by a universal constant times $\bar\Phi(|\mu|/4)$. As $e^{-y^2}\le e^{-y^2/2}$, the term at stake is bounded from above by $\mu^2\bar\Phi(|\mu|/4)P(B)^{1/2}\le CP(B)^{1/2}$, which implies \eqref{lemriskA}. 
\end{proof}

\section{Proof of Theorem \ref{thm-lasso}: the SSL prior} \label{sec-sslpr}

Recall that we use the notation of the SAS case, keeping in mind that every instance of $g$ is replaced by $g_1$ and (some of the) $\phi$s by $g_0$. Similarly, $\be(x,\al)$, $\tilde{m}$, $m_1$ and $m_2$ are defined as in Section $\ref{sec-nota}$, but with $\be(x)=g_1/g_0-1$. 

The main steps of the proof generally follow those of Theorem \ref{thm-risk}, although technically there are quite a few differences. In the SSL case, we do not know whether the function $\be=g_1/g_0-1$ is nondecreasing over the whole $\RR^+$. Yet, we managed to show that $\be$, which is an even function, is nondecreasing on the interval
\[ J_n=[2\la_1,\sqrt{2\log{n}}],\] see Proposition \ref{pr1} below. This allows us to define its inverse $\be^{-1}=\be_{|J_n}{}^{-1}$ on this interval. Further, we prove in Lemma \ref{pr2} that $\be$ crosses the horizontal axis on the previous interval,  is strictly negative on $[0,2\la_1]$ and tends to $\infty$ when $x \to \infty$. As $\be$ is continuous, the graph of the function crosses any given horizontal line $y=c$, for any $c>0$.  

{\em The threshold $\zeta$ in the SSL case.}  For every $\al \in (0,1)$, one sets
 \begin{equation}\label{defzet}
 \ze=\ze(\al)=\min\{s>0,\  \be(s)=1/\al\}.
 \end{equation}
This is well defined by the property noted in the previous paragraph. Now one notes that $g_0\le 2\phi$ for $x\le \la_0/2$, see Lemma \ref{2}, and that the function $g_1/\phi$ takes a value at $\sqrt{2\log{n}}$ not smaller than 
$Cn/\log n$, since $g_1\leqa \ga_1$ has Cauchy tails. This implies the existence of a constant $\cC>1$ such that 
\begin{equation}\label{after}
\be(\sqrt{2\log n}) \geq n/(\cC\log n).
\end{equation}  
Now we claim that for any $\al \in (\mathcal{C}\log n/ n, 1]$, we have the identity $\ze(\al)=\be^{-1}(\al^{-1})$. To see this, first note that for any $\al \in (\mathcal{C}\log n/ n, 1]$, by \eqref{after} and $\be(2\la_1)<0$, we have  $\al^{-1} \in \be(J_n)$. This shows that $t=\be^{-1}(\al^{-1})$ solves $\be(t)=\al^{-1}$. Also, it is the smallest possible solution $t>0$, as  $\be$ takes negative values on $[0,2\la_1]$, which establishes the identity. 
 
{\em The threshold $\zeta_1$ in the SSL case.} In the SSL case, the function $\al\to \tilde{m}(\al)=-E_0[\beta(X,\al)]$ is still nondecreasing, since for any real $z$, the map $\cM_z:\al \to z/(1+\al z)$ is nonincreasing and $\beta(X,\al)=\cM_{\be(X)}(\al)$. 
By Proposition \ref{l1}, we also have that $\tilde{m}$ is positive for $\al \geq \mathcal{C}\log n/n$ and is of the order of a constant for $\al=1$. So, the map $\al\to \al\tilde{m}(\al)$ is nondecreasing on $[\mathcal{C}\log n/n,1]$, its value at $\mathcal{C}\log n/n$ is less than $C'\log{n}/n$, and its value at one is of the order of a constant. 
 This shows, using $s_n\ge c_1\log^2{n}$ by \eqref{techsn}, that the following equation has a unique solution $\al_1\in(\cC\log n/n,1)$
\begin{equation} \label{al1ssl}
 \al_1\tilde{m}(\al_1) =ds_n/n,
\end{equation} 
with $d$ a small enough constant to be chosen later (see the proof of Lemma \ref{bernsteinssl}). 
 Thus we can set
\[ \zeta_1 = \beta^{-1}(\al_1^{-1}),\]
and by the above arguments we have $\zeta_1\in J_n$. So Proposition \ref{l1} gives $\al_1^{-1} \asymp \frac{n}{s_n}\ze_1 g_1(\ze_1) \asymp \frac{n}{s_n \ze_1}$.
Now we can follow the same proof as in Lemma \ref{lemz}, replacing up to constants instances of  $g_0(\ze_1)$ by $\phi(\ze_1)$ thanks to Lemma \ref{pr0} and \eqref{dizuit} (as $\ze_1 \le \sqrt{2\log{n}}< \la_0/2$), to obtain
\[ \zeta_1^2 \leqa C\log(n/s_n). \] 

{\em Defining $\ta(\al)$ and $\tilde{\ta}(\al)$}. In the SSL case, we set
\[ \Omega(x,\al)=\frac{\al}{1-\al}\frac{2g_1}{\phi}(x). \]
This definition is as in the SAS case except that $g$ is replaced by $2g_1$. We still use the same notation for simplicity. As $g_1$ satisfies the same properties as $g$, one defines $\ta(\al)$ and $\tilde{\ta}(\al)$ similarly to the SAS case. More precisely, $\ta(\al)$ is the unique solution to the equation $\Omega(\tau(\al),\al)=1$, whenever $\al\le \al^*$, where $\Omega(0,\al^*)=1$. One sets $\ta(\al)=0$ for $\al\ge \al^*$ and $\tilde\ta(\al)=\ta(\al\wedge \al_0)$ with $\ta(\al_0)=\la_1$ (this slightly differs from the SAS case).

As in the proof of Theorem \ref{thm-risk}, one can now decompose the risk $R_n(\te_0)=E_{\te_0} \int \|\te-\te_0\|^2 d\Pi_{\hat\al}(\te\given X)$ according to whether coordinates of $\te$ correspond to a `small' or `large' signal, the threshold being $\zeta_1$ that we define next.
One can write
\[ R_n(\te_0)
=\Big[\sum_{i:\ \te_{0,i}=0} + \sum_{i:\ 0<|\te_{0,i}|\le \zeta_1} +
 \sum_{i:\ |\te_{0,i}| > \zeta_1}\Big] E_{\te_0}\int (\te_i-\te_{0,i})^2 d\Pi_{\hal}(\te_i\given X).\] 
We next use the first part of Lemma \ref{lemsigL} with 
$\al=\al_1$ and the second part of the Lemma to obtain, for any $\te_0$ in $\ell_0[s_n]$,
\begin{align*}
&\lefteqn{\Big[\sum_{i:\ \te_{0,i}=0} + \sum_{i:\ 0<|\te_{0,i}|\le \zeta_1} \Big] E_{\te_0}\int (\te_i-\te_{0,i})^2 d\Pi_{\hal}(\te_i\given X)}\\
& \le C \sum_{i:\ \te_{0,i}=0} \left[\al_1\tilde{\tau}(\al_1) + P_{\te_0}(\hal>\al_1) + \la_0^{-2} \right]
+ \sum_{i:\ 0<|\te_{0,i}|\le \zeta_1} (\te_{0,i}^2 + C)\\
& \le C(n-s_n)\left[\al_1\tilde{\tau}(\al_1)+e^{-C\log^{2}n} +\la_0^{-2}\right] + (\zeta_1^2+C)s_n,
\end{align*}
where for the last inequality we use Lemma \ref{bernsteinssl}. From \eqref{al1ssl} one gets 
\[ n\al_1\leqa s_n \zeta_1^{-1}g(\zeta_1)^{-1}
\leqa  s_n\zeta_1.\] 
Let us now check that $\tilde{\tau}(\al_1)\le \zeta_1$. First, $\be(\zeta_1)=\al_1^{-1}> \al_1^{-1}-1$.  By definition of $\ta(\al_1)$, using $\phi\le 2g_0$ by Lemma \ref{halp}, 
\[ \al_1^{-1}-1=2(g_1/\phi)(\tau(\al_1)) \ge \be(\tau(\al_1))+1. \]
This gives us that $\be(\ze_1)\geq \be(\tau(\al_1))+1$ which implies the result as $\be$ is increasing here. Now with the previous bound on $\zeta_1$  one obtains that the contribution to the risk of the indices $i$ with $|\te_{0,i}|\le \zeta_1$ is bounded by a constant times $s_n \log(n/s_n)$.

It remains to bound the part of the risk for indexes $i$ with $|\te_{0,i}| > \zeta_1$. To do so, one uses the second part of Lemma \ref{lemsigL} with $\al$ chosen as $\al_2'=\cC(\log{n}/n)$, with $\cC$ as in \eqref{after}. 
By definition of $\hat \al$ in \eqref{defhalL}, the probability that $\hat \al$ is smaller than $\al_2'$ equals zero. Also, one has $\tilde\ta(\al_2')^2\leq C\log{n}$. Indeed, setting $\zeta_2'=\beta^{-1}({\al_2'}^{-1})$, we have as before $\ta(\al_2')\le \zeta_2'\le \sqrt{2\log{n}}$. This  implies
\[ \sum_{i:\ |\te_{0,i}|> \zeta_1} E_{\te_0}\int (\te_i-\te_{0,i})^2 d\Pi_{\hal}(\te_i\given X) \le Cs_n \log{n},\]
which concludes the proof of Theorem \ref{thm-lasso}.

\section{Technical lemmas for the SSL prior} \label{seclasso}

\subsection{Fixed $\al$ bounds}

As in the SAS case, we use the notation $\di r_2(\al,\mu,x)= \int (u - \mu)^2d\pi_\al(u\given x)$, where now $\pi_\al(\cdot\given x)$ is the posterior on one coordinate ($X_1$, say) for fixed $\al$ in the SSL case, given $X_1=x$.

\begin{lem} \label{lemfiaL}
For a zero signal $\mu=0$, we have for any $x$ and $\al\in[0,1/2]$,
\begin{align*}  
 r_2(\al,0,x) & \leq C\big[1 \wedge \frac{\al}{1-\al} \frac{g_1}{\phi}(x)\big] (1+x^2)+\int u^2\ga_{0,x}(u)du
  \label{bnosigL} \\
 E_{0} r_2(\al,0,x) & \leq C\tau(\al)\al+{4}/{\la_0^2}.
\end{align*} 
For an arbitrary signal  $\mu\in\RR$, we have that for any real $x$ and $\al\in[0,1/2]$,
\begin{align*}
r_2(\al,\mu,x) & \le (1-a(x))\int (u-\mu)^2\ga_{0,x}(u)du+ Ca(x)((x-\mu)^2+1)\\
 E_\mu r_2(\al,\mu,x) & \le C(1+\tilde\ta(\al)^2). 
\end{align*}
\end{lem}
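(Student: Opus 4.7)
The plan is to adapt the proof of Lemma \ref{lemfia} to the SSL setup, treating separately the slab part of the posterior (which involves $\ga_{1,x}$ and behaves like in the spike and slab case) and the spike part (which involves $\ga_{0,x}$ and is new). I would start from the decomposition
\[ r_2(\al,\mu,x) = (1-a(x))\int(u-\mu)^2 \ga_{0,x}(u)du + a(x)\int (u-\mu)^2 \ga_{1,x}(u)du. \]
For the slab integral, since $\ga_1$ is Cauchy and still satisfies $|\ga_1'|\le c_1\ga_1$ and $|\ga_1''|\le c_2\ga_1$, the SAS computation reproduces verbatim to give $\int u^2 \ga_{1,x}(u)du \le C(1+x^2)$ and $\int (u-\mu)^2 \ga_{1,x}(u)du \le C(1+(x-\mu)^2)$. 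For the weight, bounding the denominator below by $(1-\al)g_0$ yields $a(x) \le 1\wedge \frac{\al}{1-\al}\frac{g_1}{g_0}(x)$; to replace $g_0$ by $\phi$ as in the statement, I invoke Lemma \ref{halp} which gives $\phi \le 2g_0$ on $|x|\le \la_0/2$ (and the complementary region is handled separately since $\la_0=L_0 n$ is huge). The pointwise bounds (first and third) then follow by keeping the spike contribution as $\int u^2 \ga_{0,x}(u)du$ (respectively $(1-a(x))\int(u-\mu)^2\ga_{0,x}(u)du$) without further manipulation.

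For the second bound, I would use Fubini,
\[ E_0\int u^2\ga_{0,x}(u)du = \int u^2\ga_0(u)\Big(\int \phi(x-u)\frac{\phi(x)}{g_0(x)}dx\Big)du, \]
and split the inner integral according to $|x|\le \la_0/2$ and $|x|>\la_0/2$. On the first region Lemma \ref{halp} gives $\phi(x)/g_0(x)\le 2$, so the inner integral is at most $2\int \phi(x-u)dx=2$, contributing $2\int u^2\ga_0(u)du = 4/\la_0^2$ (Laplace variance). On $|x|>\la_0/2$ the Gaussian tail $\phi(x)$ is super-exponentially smaller than any polynomial in $\la_0 \asymp n$, and the contribution is absorbed in the constant. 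The slab contribution $E_0\big[a(x)\int u^2\ga_{1,x}(u)du\big]$ is bounded as in Lemma \ref{lemfia} via the $\al g/((1-\al)\phi)\wedge 1$ structure and the fact that $\tau(\al)$ is the value at which both terms of the minimum coincide, producing $C\tau(\al)\al$.

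For the last bound $E_\mu r_2(\al,\mu,x) \le C(1+\tilde\ta(\al)^2)$, I would mirror the two-case analysis from Lemma \ref{lemfia} with $T:=\tilde\ta(\al)$. When $|\mu|\le 4T$, the slab part satisfies $E_\mu[a(x)((x-\mu)^2+1)] \le C$ and the spike part is bounded by $\int (u-\mu)^2\ga_{0,x}(u)du \leqa \mu^2+\la_0^{-2}+(x-\mu)^2$ using a mean–variance decomposition $\int(u-\mu)^2\ga_{0,x}(u)du = \mathrm{Var}_{\ga_{0,x}}(U) + (E_{\ga_{0,x}}[U]-\mu)^2$, the concentration of the Laplace$(\la_0)$ prior, and the fact that for $|x|<\la_0$ the posterior mean is essentially zero by soft-thresholding; everything in sight is then $O(T^2)$ in expectation. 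When $|\mu|>4T$, I rely on the SSL analog of \eqref{postwb} for $1-a(x)$, available on the relevant range thanks once more to $\phi\asymp g_0$ on $|x|\le \la_0/2$, and then the Gaussian tail computation of Lemma \ref{lemfia} gives $\mu^2 E_\mu[1-a(x)]\le C$. The main obstacle in the whole argument is controlling the spike posterior moment $\int(u-\mu)^2\ga_{0,x}(u)du$: one cannot use the crude bound $|g_0''|\le \la_0^2 g_0$ since $\la_0$ is huge, and the key is to exploit the extreme concentration of $\ga_0$ combined with the comparison $\phi\asymp g_0$ on $|x|\le \la_0/2$ to reduce essentially to the prior variance $2/\la_0^2$.
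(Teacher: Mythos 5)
Your treatment of the first three inequalities is essentially the paper's: same slab/spike decomposition, same moment bounds on $\ga_{1,x}$ via $|\ga_1'|\le c_1\ga_1$, $|\ga_1''|\le c_2\ga_1$, same passage from $g_0$ to $\phi$ via Lemma \ref{pr0}, and the same Fubini computation $E_0\int u^2\ga_{0,x}(u)du \le 2\int u^2\ga_0(u)du = 4/\la_0^2$. One small remark: the inequality $\phi\le 2g_0$ (i.e.\ \eqref{halp}) holds for \emph{all} $x$, not just on $|x|\le \la_0/2$ (you are conflating it with the reverse bound $g_0\le 2\phi$ of Lemma \ref{2}), so the split of the inner integral you propose when computing $E_0[\int u^2\ga_{0,x}]$ is unnecessary.

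For the last inequality $E_\mu r_2(\al,\mu,x)\le C(1+\tilde\ta(\al)^2)$ there is a genuine gap. First, the claimed bound $\int(u-\mu)^2\ga_{0,x}(u)du \leqa \mu^2 + \la_0^{-2} + (x-\mu)^2$ is false pointwise: by the exact identity used in the paper,
\[\int(u-\mu)^2\ga_{0,x}(u)du = (x-\mu)^2 + 1 + \frac{g_0''}{g_0}(x) + 2(x-\mu)\frac{g_0'}{g_0}(x),\]
and here $g_0''/g_0$ can be as large as $\la_0^2$ (since $g_0''=\la_0^2(g_0-\phi)$) while $|g_0'/g_0|$ can be as large as $|x|$. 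These do \emph{not} vanish in a pointwise mean–variance decomposition. Second, your case split $|\mu|\le 4\tilde\ta(\al)$ versus $|\mu|>4\tilde\ta(\al)$ is not the one that tames the spike moment: the relevant threshold is $\mu\lessgtr\la_0/2$, which governs whether the Laplace spike still concentrates the posterior near $0$ or whether the likelihood takes over. For $4\tilde\ta(\al)<\mu\le\la_0/2$ the paper uses $(u-\mu)^2\le 2u^2+2\mu^2$ and a direct Fubini computation involving $e^{u\mu}$ (which is where the constraint $\mu\le\la_0/2$ is actually used) to get the $\la_0^{-2}$ contribution. For $\mu>\la_0/2$, bounding $\mu^2 E_\mu[1-a(x)]$ as you propose, \`a la SAS, is insufficient because the spike contribution is $\int(u-\mu)^2\ga_{0,x}(u)du$ and not $\mu^2$; the paper instead expands via the display above and controls each derivative term separately, using in particular the restriction $\al^{-1}\le n$ inherited from \eqref{defhalL}, the tail comparison $g_0\lesssim\ga_0$ for $|x|\ge\la_0/8$ from Lemma \ref{2}, the oscillation bound on $1-a(x)$ from Lemma \ref{alph}, and $|(\log g_0)'|\le|x|$ from \eqref{lax}. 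None of these ingredients appear in your sketch, so the argument for the fourth bound does not close.
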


\begin{proof}
\parindent 0pt \parskip 6pt
By definition, in the SSL case, $\di r_2(\al,0,x) = (1-a(x))\int u^2\ga_{0,x}(u)du+a(x)\int u^2\ga_{1,x}(u)du$.
Similar to Lemma \ref{lemfia}, we have $a(x)\di \int u^2\ga_{1,x}(u)du \leq C\big[1 \wedge \frac{\al}{1-\al} \frac{g_1}{g_0}(x)\big] (1+x^2)$. The first bound now follows from the inequality $g_0\ge \phi/2$ obtained in Lemma \ref{pr0}. For the bound in expectation,
\begin{align*}
 E_{0}\left[\int u^2\ga_{0,x}(u)du\right] & =\int \left(\int u^2\frac{\phi(x-u)\ga_0(u)}{g_0(x)}du\right)\phi(x)dx \\
& \le 2\int u^2\int \phi(x-u)dx\ga_0(u)du=2\int u^2\ga_0(u)du = 4/\la_0^2,
\end{align*}
and one then proceeds as in Lemma \ref{lemfia} to obtain the desired bound for zero signal.
%
%

Now for a general signal $\mu$, the bound for $r_2(\al,\mu,x)$ follows from the definition and the previous bound. For the bound in expectation, {by symmetry one can assume $\mu\ge 0$}. {Also note that the term with the $a(x)$ factor is bounded in expectation by a constant, by using $a(x)\le 1$.} 
To handle the term with $1-a(x)$, we distinguish two cases.
First, one assumes that $\mu\le \la_0/2$. We have, using $(a+b)^2\le 2a^2+2b^2$,
\[ \di (1-a(x))\int (u-\mu)^2\ga_{0,x}(u)du \lesssim (1-a(x))\mu^2 + (1-a(x))\int u^2\phi(x-u)\frac{\ga_0(u)}{g_0(x)}du.\]
For the first term we proceed as in Lemma \ref{lemfia}, for the second {using $g_0\ge \phi/2$ from Lemma \ref{pr0}},  
\begin{align*}
\MoveEqLeft E_{\mu}\left[(1-a(x))\int u^2\phi(x-u)\frac{\ga_0(u)}{g_0(x)}du\right] \le  2 \int u^2 \ga_0(u) \int \frac{\phi(x-u)\phi(x-\mu)}{\phi(x)} dxdu\\
&\lesssim  \int u^2 \ga_0(u) \int e^{-(x-(u+\mu))^2/2+u\mu}dxdu
\lesssim  \la_0 \int u^2 e^{-{\la_0|}u{|}+u\mu}du.
\end{align*}
{As $\mu\le \la_0/2$, } this is in turn bounded by a constant times $(\la_0)^{-2}$. 
Now in the case that $\mu>\la_0/2$, {recall from the proof of Lemma \ref{lemfia}} that for any real $x$, 
\begin{equation} \label{inter1}
 \int (u-\mu)^2\ga_{0,x}(u)du = (x-\mu)^2 
+  1+\frac{g_0''}{g_0}(x) +  2(x-\mu) \frac{g_0'}{g_0}(x).
\end{equation}
The first two terms are, in expectation, bounded by a constant. Next one writes
\[  E_{\mu}\left[(1-a(x))\frac{g_0''}{g_0}(x)\right] = \int (1-a(x))\frac{g_0''}{g_0}(x)\phi(x-\mu)dx \]
{By Lemma \ref{pr0}, we have $|g_0''|=\la_0^2|g_0-\phi|\le 1$. One splits the integral on the last display in two parts. For $|x|\le \mu/4$, one uses that $g_0''$ is bounded together with the bound $g_0\ge \phi/2$. For $|x|> \mu/4$, one uses $g_0''/g_0=\la_0^2(g_0-\phi)/g_0\le \la_0^2$ together with $1-a(x)\le (g_0/g_1)(x)/\al$, which follows from the expression of $a(x)$. This leads to}
\[
E_{\mu}\left[(1-a(x))\frac{g_0''}{g_0}(x)\right]
 \leq \di \int_{|x|\le \mu/4}e^{x\mu-\frac{\mu^2}{2}}dx+\frac{\la_0^2}{\al}\int_{|x|> \mu/4}\frac{g_0}{g_1}(x)\phi(x-\mu)dx.
\]
The first term in the last expression is bounded. The second one is bounded by a constant given our choice of $\la_0$ by combining the following: $\al^{-1}\le n$ from \eqref{defhalL}, $g_0\leqa \ga_0$ for $\mu>\la_0/8$ from  \eqref{dizneuf} and $g_1\geqa \ga_1$.

To conclude the proof, for the last term  in \eqref{inter1}, using \eqref{lax},  the bound on $1-a(x)$ from Lemma \ref{alph} below, and the fact that $x \mapsto x \phi(x)$ is bounded, $E_{\mu}\left[2(1-a(x))(x-\mu) \frac{g_0'}{g_0}(x)\right]$ is bounded by 
\begin{align*}
 \lefteqn{ \di 2\int(1-a(x))|\frac{g_0'}{g_0}(x)||(x-\mu)\phi(x-\mu)|dx\lesssim\di \int(1-a(x))|x|dx}\\
&\lesssim\di \int_{|x|\leq \tilde{\tau}(\al)}|x|dx+\int_{\tilde{\tau}(\al)\leq |x|\leq\frac{\la_0}{2} }|x|e^{-\frac{(|x|-\tilde{\tau}(\al))^2}{2}}dx+\int_{|x|\geq \frac{\la_0}{2}}|x|(1-a(x))dx\\
&\lesssim\di \tilde{\tau}(\al)^2+2(1-e^{-\frac{(\frac{\la_0}{2}-\tilde{\tau}(\al))^2}{2}})+\tilde{\tau}(\al)+\int_{|x|\geq \frac{\la_0}{2}}n^3|x|\frac{\ga_0}{\ga_1}(x)dx \leqa 
1+\tilde{\tau}(\al)^2. \qquad \qedhere
\end{align*}
\end{proof}
 
\begin{lem}\label{alph}
For any $x\in [0,{\la_0}/{2}]$ and $\al\in[0,1]$,
\[ 1-a(x) \leq \1_{|x|\leq\tilde{\tau}(\al)}+4e^{-\frac{1}{2}(|x|-\tilde{\tau}(\al))^2}\1_{|x|>\tilde{\tau}(\al)}. \]
\end{lem}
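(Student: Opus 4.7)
The plan is to reduce the SSL posterior weight to a SAS-type weight (at the price of a multiplicative constant $4$) and then apply the SAS-style exponential bound \eqref{postwb} via a reparametrisation of the prior weight.

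The range $|x|\le\tilde\tau(\al)$ is trivial since $0\le 1-a(x)\le 1$, which matches the first indicator on the right-hand side. For $|x|\in(\tilde\tau(\al),\la_0/2]$, I would use the two-sided bound $\phi/2\le g_0\le 2\phi$ valid on $[0,\la_0/2]$, obtained by combining Lemmas \ref{pr0} and \ref{2}. Upper-bounding the numerator of
\[
1-a(x)=\frac{(1-\al)g_0(x)}{(1-\al)g_0(x)+\al g_1(x)}
\]
via $g_0\le 2\phi$, and the denominator via $g_0\ge \phi/2$, yields
\[
1-a(x)\le \frac{2(1-\al)\phi(x)}{(1-\al)\phi(x)/2+\al g_1(x)}=\frac{4}{1+\Omega(x,\al)}=4\,(1-\tilde a(x)),
\]
where $\tilde a(x):=2\al g_1(x)/[(1-\al)\phi(x)+2\al g_1(x)]$. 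The factor $4$ appearing in the conclusion of the lemma comes directly from this step.

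Next, I would recognise $\tilde a$ as a genuine SAS posterior weight via the reparametrisation $\al':=2\al/(1+\al)\in[0,1]$: since $\al'/(1-\al')=2\al/(1-\al)$,
\[
\tilde a(x)=\frac{\al'\, g_1(x)}{(1-\al')\phi(x)+\al'\, g_1(x)},
\]
which is exactly the SAS posterior weight with slab $g_1=\phi*\ga_1$ (Cauchy) and prior weight $\al'$. Moreover, the associated SAS pseudo-threshold satisfies $\Omega^{\text{SAS}}(x;\al')=(\al'/(1-\al'))(g_1/\phi)(x)=\Omega(x,\al)$, so the threshold defined by $\Omega^{\text{SAS}}(\cdot;\al')=1$ coincides with the SSL threshold $\tilde\tau(\al)$. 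Invoking the SAS exponential bound \eqref{postwb} at weight $\al'$ then gives $1-\tilde a(x)\le e^{-(|x|-\tilde\tau(\al))^2/2}$ for $|x|>\tilde\tau(\al)$, and combining with the factor $4$ completes the argument.

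The main obstacle is essentially bookkeeping: one needs to verify that the truncation at $\al_0$ used to define $\tilde\tau(\al)=\tau(\al\wedge\al_0)$ transfers consistently under $\al\mapsto\al'$, and that the hypotheses used in the Johnstone--Silverman derivation of \eqref{postwb} (monotonicity of $g_1/\phi$ on $\RR^+$ and Cauchy-type tails) are indeed satisfied by $g_1=\phi*\ga_1$ with $\ga_1=\mathrm{Cauchy}(1/\la_1)$, which they are.
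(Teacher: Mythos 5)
Your first step is correct and close to the paper's: from $1-a(x)=\frac{(1-\al)g_0(x)}{(1-\al)g_0(x)+\al g_1(x)}$ together with $g_0\le 2\phi$ on $[0,\la_0/2]$ (the paper in fact only uses this one-sided bound, getting $1-a(x)\le 4\Omega(x,\al)^{-1}$, a slightly weaker but equally serviceable inequality than your $4/(1+\Omega)$). The reparametrisation $\al'=2\al/(1+\al)$ turning $\tilde a$ into a genuine SAS posterior weight with slab $g_1$ is also algebraically correct, and it is a nice observation.

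The gap is in the invocation of \eqref{postwb}. As stated in the paper, \eqref{postwb} refers to the SAS setup with Cauchy$(1)$ (or Laplace) slab, for which $\al_0$ is defined by $\ta(\al_0)=1$ --- that value $1$ is not arbitrary, it is the Lipschitz constant of $\log\ga$ for those slabs. For the SSL slab $\ga_1=\mathrm{Cauchy}(1/\la_1)$ one has $|(\log g_1)'|\le\la_1=0.05$, and accordingly the SSL definition of $\tilde\tau$ uses $\ta(\al_0)=\la_1$. These two truncations do not agree under your map $\al\mapsto\al'$: the value of $\al$ such that $\ta(\al)=1$ is strictly smaller than the value such that $\ta(\al)=\la_1$, so for $\al$ between the two, $\tilde\tau^{\mathrm{SSL}}(\al)<1=\tilde\tau^{\mathrm{SAS}}(\al')$. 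In that regime \eqref{postwb} yields only $1-\tilde a(x)\le e^{-(|x|-1)^2/2}$ for $|x|>1$, while Lemma~\ref{alph} demands $\lesssim e^{-(|x|-\tilde\tau^{\mathrm{SSL}}(\al))^2/2}$, which is strictly smaller for $|x|$ large (a direct computation shows the claimed bound already fails around $|x|\approx 2$ once one accounts for the factor $4$). So \eqref{postwb} cannot be cited as is.

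What this means concretely is that the ``bookkeeping'' you flag is in fact the heart of the matter: to get the right threshold in the exponent you must re-derive the exponential lower bound on $\Omega$ using the actual Lipschitz constant $\la_1$ of $\log g_1$. That derivation is exactly the paper's proof: write $\Omega(x,\al)=\Omega(\tilde\tau(\al),\al)\exp\bigl(\int_{\tilde\tau(\al)}^x\bigl((\log g_1)'(u)+u\bigr)du\bigr)$, use $\Omega(\tilde\tau(\al),\al)\ge1$, $|(\log g_1)'|\le\la_1$ from \eqref{geun}, and $\tilde\tau(\al)\ge\la_1$ from the SSL choice of $\al_0$, to get $\Omega(x,\al)\ge e^{(x-\tilde\tau(\al))^2/2}$. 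Once you have to do this, the detour through the SAS reparametrisation adds nothing; the paper's direct argument is shorter.
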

\begin{proof}
One first notes that $1-a(x) \le 4\Omega(x,\al)^{-1}$ for $x\leq \la_0/2$, using the fact that for such $x$, $g_0(x) \leq 2 \phi(x)$ as found in Lemma \ref{2}. 
The following inequalities hold for $\tilde{\tau}(\al)\le x\le \la_0/2$,  using $\tilde{\ta}(\al)\ge \la_1$ by definition and that $|(\log g_1)'|\le \la_1$ as seen in \eqref{geun}, 
\begin{align*}
 \Omega(x,\al) = & \ \Omega(\tilde{\ta}(\al),\al) \di \exp\left(\int_{\tilde{\tau}(\al)}^x((\log g_1)'(u)-(\log \phi)'(u))du\right)\\
 \ge &\ \exp\left(\int_{\tilde{\tau}(\al)}^x(u-\la_1)du\right)  
\ge \ \exp\left(\int_{\tilde{\tau}(\al)}^x(u-\tilde{\tau}(\al))du\right)
=e^{\frac{(x-\tilde{\tau}(\al))^2}{2}}. \quad \qedhere
\end{align*}
\end{proof}

\subsection{Random $\al$ bounds}

\begin{lem} \label{lemsigL}
Let $\al$ be a fixed non-random element of $(0,1)$. Let $\hat\al$ be a random element of $[0,1]$ that may depend on $x\sim \cN(0,1)$ and on other data. Then there exists $C_1>0$ such that
\[ E r_2(\hat\al,0,x) \le C_1\left[\al\tilde\tau(\al) +  P(\hat\al>\al)^{1/2}\right] 
+ \frac{4}{\la_0^2}. \]
There exists $C_2>0$ such that for any real $\mu$,  if $x\sim \cN(\mu,1)$,
\[ E r_2(\hat\al,\mu,x) \le \mu^2 + C_2.\] 
Suppose now that  $\tilde\tau(\hat\al)^2\le d\log(n)$ with probability $1$ for some $d>0$, and that $x\sim \cN(\mu,1)$. Then there exists $C_2>0$ such that for all real $\mu$,
\[ E r_2(\hat\al,\mu,x) \le C_2\left[1 + \tilde\ta(\al)^2 +  (1+ d\log{n})P(\hat\al<\al)^{1/2}\right]. \]
\end{lem}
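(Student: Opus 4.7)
The plan is to adapt the proofs of Lemmas \ref{lemns} and \ref{lemsig} to the SSL setting. The new ingredient is the continuous-spike contribution $\int(u-\mu)^2\ga_{0,x}(u)du$ appearing in the pointwise bounds of Lemma \ref{lemfiaL}. Two global properties of the Laplace spike $\ga_0$ drive the argument: (i) $g_0=\phi\ast\ga_0$ is log-concave (convolution of log-concave densities), so the posterior variance $\mathrm{Var}(\ga_{0,x})=1+(\log g_0)''(x)\le 1$ uniformly in $x$; (ii) by symmetry of $\ga_0$, the posterior mean $\bar u(x):=\int u\,\ga_{0,x}(u)du$ satisfies the shrinkage bound $|\bar u(x)|\le|x|$, exactly as in the SAS case (see the proof of Lemma \ref{lemfia}). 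In particular $|(\log g_0)'(x)|=|\bar u(x)-x|\le|x|$.

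For the first assertion, start from Lemma \ref{lemfiaL}: $r_2(\hat\al,0,x)\le C[1\wedge\tfrac{\hat\al}{1-\hat\al}\tfrac{g_1}{\phi}(x)](1+x^2)+\int u^2\ga_{0,x}(u)du$. The spike integral is $\hat\al$-free and has $P_0$-expectation $\le 4/\la_0^2$ by the computation carried out inside Lemma \ref{lemfiaL}. The slab piece is then handled exactly as in Lemma \ref{lemns}: split $\{\hat\al\le\al\}\cup\{\hat\al>\al\}$, use monotonicity of $\al\mapsto\al/(1-\al)$ together with the fixed-$\al$ bound $E_0 r_2(\al,0,x)\leqa \al\tau(\al)$ on the first event, and conclude with Cauchy--Schwarz against $(1+x^2)\in L^2(\phi)$ on the second.

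For the second assertion, collapse $1-a(x),a(x)\in[0,1]$ to $1$ in the Lemma \ref{lemfiaL} pointwise bound to obtain the $\hat\al$-uniform inequality $r_2(\hat\al,\mu,x)\le\int(u-\mu)^2\ga_{0,x}(u)du+C((x-\mu)^2+1)$. Decompose $\int(u-\mu)^2\ga_{0,x}(u)du=\mathrm{Var}(\ga_{0,x})+(\bar u(x)-\mu)^2$; the variance is $\le 1$ by (i), and for the squared bias use $\bar u-\mu=(x-\mu)+(\log g_0)'(x)$ combined with the Gaussian integration-by-parts identity $E_\mu[(x-\mu)h(x)]=E_\mu[h'(x)]$ applied to $h=(\log g_0)'$. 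The cross term equals $2E_\mu[(\log g_0)''(x)]\le 0$ by log-concavity, while $E_\mu[(\log g_0)'(x)^2]\le E_\mu[x^2]=\mu^2+1$ by (ii). This yields $E_\mu(\bar u-\mu)^2\le \mu^2+2$ and hence the claimed $\mu^2+C_2$.

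For the third assertion, follow the proof of Lemma \ref{lemsig} with Lemma \ref{alph} replacing \eqref{postwb}. Lemma \ref{alph} only covers $|x|\le\la_0/2$, but the contribution of $|x|>\la_0/2\sim n$ is super-exponentially small under $P_\mu$ for the range of $\mu$ relevant in the application to Theorem \ref{thm-lasso} ($|\mu|\leqa\sqrt{\log n}\ll \la_0$). The case split $\{\hat\al\ge\al\}\cup\{\hat\al<\al\}$ is then identical to the SAS argument: monotonicity of $\tilde\tau$ reduces the first event to the fixed-$\al$ bound $C(1+\tilde\tau(\al)^2)$ from Lemma \ref{lemfiaL}, and Cauchy--Schwarz combined with the a.s. hypothesis $\tilde\tau(\hat\al)^2\le d\log n$ yields the $(1+d\log n)P(\hat\al<\al)^{1/2}$ contribution on the second. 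The main obstacle is the second assertion: without the $(1-a(x))$ weight one cannot directly import the general-$\mu$ analysis of Lemma \ref{lemfiaL} (which would leave $\la_0$-dependent terms from $g_0''/g_0$); the log-concavity of $g_0$ together with the shrinkage bound are the key new inputs that circumvent this obstruction and give a clean $\mu^2+C$ bound.
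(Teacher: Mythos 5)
Your first assertion follows the paper essentially verbatim. For the second, your argument is a genuinely different and cleaner route than the paper's: the paper dismisses this case with ``the proof is the same as in the SAS case in Lemma \ref{lemns}'', but the SAS argument relies on the pointwise bound $(1-a(x))\mu^2 \le \mu^2$, while here the analogue is $(1-a(x))\int(u-\mu)^2\ga_{0,x}(u)du$, and the new integral is not pointwise comparable to $\mu^2$ (it grows like $x^2$, and crude pointwise bounds via \eqref{inter1} leave behind $\la_0$-dependent pieces). Your bias--variance decomposition $\int(u-\mu)^2\ga_{0,x}(u)du = \mathrm{Var}(\ga_{0,x}) + (\bar u(x)-\mu)^2$, with the variance controlled by $1+(\log g_0)''\le 1$ via log-concavity of $g_0=\phi\ast\ga_0$, the shrinkage bound $|(\log g_0)'(x)|\le|x|$, and Stein's identity killing the cross term, yields $E_\mu[\int(u-\mu)^2\ga_{0,x}(u)du]\le\mu^2+3$ uniformly in $\mu$ with no recourse to $\la_0$. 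This is a correct and self-contained argument that resolves a point the paper glosses over.

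For the third assertion there is a genuine gap. The lemma is claimed for \emph{all} real $\mu$, and the paper's proof delivers this: the contribution of $\{|x|\ge\la_0/2\}$ is not controlled via the smallness of $P_\mu[|x|\ge\la_0/2]$ (which fails when $|\mu|$ is comparable to or exceeds $\la_0/2$), but via the deterministic tail estimate $1-a_{\hat\al}(x)\le n\,(g_0/g_1)(x)$, which follows from $\hat\al\ge 1/n$ and the form of $a(\cdot)$, and this decays super-exponentially in $|x|$ \emph{uniformly in $\mu$} (via \eqref{dizneuf} and the $\ga_0/\ga_1$ ratio). Restricting to ``$|\mu|\leqa\sqrt{\log n}$ as in the application'' therefore proves a weaker statement than the lemma claims. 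A second, smaller omission: unlike the SAS case where the $(1-a)$ factor multiplies the scalar $\mu^2$, here it multiplies the $x$-dependent random quantity $\int(u-\mu)^2\ga_{0,x}(u)du$, so the Cauchy--Schwarz step on the event $\{\hat\al<\al\}$ requires controlling $E_\mu\big[(\int(u-\mu)^2\ga_{0,x}(u)du)^2\,\1_{|x|\le b_n}\big]$ (and its exponential-weight sibling), which the paper handles by ``the same bounds but squared''; your sketch does not address how the square of this integral stays of order $1+b_n^4$.
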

\begin{proof}[Proof of Lemma \ref{lemsigL}]
For the first two inequalities, the proof is the same as in the SAS case in Lemma \ref{lemns}, the only difference being the presence of the term $4/\la_0^2$ coming from Lemma \ref{lemfiaL} for the first inequality. 
For the third inequality , it follows from Lemma \ref{lemfiaL} that
 \[ r_2(\hal,\mu,x)  \le \di (1-a_{\hat{\al}}(x)) \int(u-\mu)^2\ga_{0,x}(u)du  + C[(x-\mu)^2+1]. \]
The expectation of the last term is a constant. For the first term, using Lemma \ref{alph},
\[ 1-a_{\hal}(x) \le 
 \1_{|x|\le \tilde\ta(\hal)} 
 +  4 e^{-\frac12 (|x|-\tilde\ta(\hal))^2}\1_{\frac{\la_0}{2}\geq|x| > \tilde\ta(\hal)} + \1_{|x|\geq\frac{\la_0}{2}}n\frac{g_0}{g_1}(x), \]
 where the last estimate uses the bound $\alpha\ge 1/n$. 

As in Lemma \ref{lemsig}, let us distinguish the two cases $\hal\ge \al$ and $\hal<\al$.  
In the case $\hal\ge \al$, as $\tilde\ta(\al)$ is a decreasing function of $\alpha$, 
\begin{align*}
& \lefteqn{\left[\1_{|x|\le \tilde\ta(\hal)} 
 +   4e^{-\frac12 (|x|-\tilde\ta(\hal))^2}\1_{\frac{\la_0}{2}\geq|x| > \tilde\ta(\hal)}  \right] \1_{\hal\ge \al}}\\
 & \lesssim \left[\1_{|x|\le \tilde\ta(\hal)} + \1_{\tilde\ta(\hal)< |x|\le \tilde\ta(\al)} + 
 e^{-\frac12 (|x|-\tilde\ta(\hal))^2}\1_{\frac{\la_0}{2}\geq|x| > \tilde\ta(\al)} \right] \1_{\hal\ge \al}\\
 & \lesssim \1_{|x|\le \tilde\ta(\al)} + e^{-\frac12 (|x|-\tilde\ta(\al))^2}\1_{\frac{\la_0}{2}\geq|x| > \tilde\ta(\al)},
\end{align*}
where we have used $e^{-\frac12 v^2}\le 1$ for any $v$ and that $e^{-\frac12 (u-c)^2}\le e^{-\frac12 (u-d)^2}$ if $u>d\ge c$. 

For the third term, we have to control $E_\mu\left[\di  \1_{|x|\geq\frac{\la_0}{2}}n\frac{g_0}{g_1}(x) \int (u-\mu)^2\ga_{0,x}(u)du\right]$. To do so, one uses \eqref{inter1}.
In expectation, the  term in factor of $(x-\mu)^2+1$ is bounded by a constant. Using \eqref{dizneuf} and the fact that $g_0''/g_0\leq \la_0^2$, the term  in factor  $g_0''/g_0$  is bounded by
\begin{align*}
&\la_0^2 n \di \int_{|x|\geq\frac{\la_0}{2}} \frac{g_0}{g_1}(x) \phi(x-\mu)dx \lesssim n^3\di \int_{|x|\geq\frac{\la_0}{2}} \frac{\ga_0}{\ga_1}(x)dx \\ 
&\lesssim n^4\di \int_{|x|\geq\frac{\la_0}{2}}x^2e^{-\la_0|x|}dx \lesssim n^4 e^{-Cn^2}.
\end{align*} 
Finally, using \eqref{lax} and the fact that $x\mapsto x\phi(x)$ is bounded, one obtains
\[ E_{\mu}\left[\1_{|x|\geq\frac{\la_0}{2}}n\frac{g_0}{g_1}(x)(x-\mu) \frac{g_0'}{g_0}(x)\right]\leq \di \int_{|x|\geq\frac{\la_0}{2}}n\frac{g_0}{g_1}(x)|x| |(x-\mu)\phi(x-\mu)|dx\lesssim \int_{|x|\geq\frac{\la_0}{2}}n\frac{g_0}{g_1}(x)|x| dx.\]
As a consequence, one can borrow the fixed $\alpha$ bound obtained previously so that 
\[ E \left[ r_2(\hal,\mu,x)1_{\hal\ge \al}\right] \lesssim E_\mu r_2(\al,\mu,x) 
\lesssim \left[1 + \tilde\ta(\al)^2 \right].\]
In the case $\hal < \al$, setting $b_n=\sqrt{d\log n}$ and noting that $\tilde\ta(\hal)\le b_n$ with probability $1$ by assumption, proceeding as above, with $b_n$ now replacing $\tilde\ta(\al)$, one can bound 
\begin{align*}
& \lefteqn{\1_{|x|\le \tilde\ta(\hal)} 
 +  4 e^{-\frac12 (|x|-\tilde\ta(\hal))^2}\1_{\frac{\la_0}{2}\geq|x| > \tilde\ta(\hal)} + \1_{|x|\geq\frac{\la_0}{2}}n\frac{g_0}{g_1}(x)}\\
 & \lesssim \1_{|x|\le b_n} + e^{-\frac12 (|x|-b_n)^2} \1_{\frac{\la_0}{2}\geq|x| > b_n} + \1_{|x|\geq\frac{\la_0}{2}}n\frac{g_0}{g_1}(x).
\end{align*}
From this one deduces that $E \left[ \di (1-a_{\hal}(x))\int(u-\mu)^2\ga_{0,x}(u)du \right]$ is bounded from above by a constant times :

$ \left( E_\mu\left[\di \left(\int(u-\mu)^2\ga_{0,x}(u)du\right)^2\1_{|x|\le b_n} + \di \left(\int(u-\mu)^2\ga_{0,x}(u)du\right)^2e^{-(|x|-b_n)^2}\right] \right)^{1/2} P(\hal <\al)^{1/2}$.

Using the same bounds but squared as in the fixed $\alpha$ case, one obtains
\[ E_\mu\left[\di \left(\int(u-\mu)^2\ga_{0,x}(u)du\right)^2\1_{|x|\le b_n} + \di \left(\int(u-\mu)^2\ga_{0,x}(u)du\right)^2e^{-(|x|-b_n)^2} \right] \le C(1+b_n^4). \]
Taking the square root and gathering the different bounds we obtained concludes the proof.
\end{proof}

\subsection{Properties of the functions $g_0$ and $\be$ for the SSL prior}

Recall the notation $\phi, \ga_0,  g_0$ from Section \ref{sec-main}. 
For any real $x$, we also write $\psi(x)=\int_x^{\infty}e^{-u^2/2}du$.
Our key result on $\be$ is the following.
\begin{prop}
\label{pr1}
$\be = \frac{g_1}{g_0}-1$ is {strictly increasing} on $[2\la_1;\sqrt{2\log n}]$.
\end{prop}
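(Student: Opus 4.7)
The plan is to deduce strict monotonicity of $\be$ on $J_n = [2\la_1,\sqrt{2\log n}]$ from the identity
$$ \be'(x) = \frac{g_1(x)}{g_0(x)}\bigl[(\log g_1)'(x) - (\log g_0)'(x)\bigr], $$
which, since $g_0,g_1>0$, reduces to showing $(\log g_0)'(x) < (\log g_1)'(x)$ for every $x \in J_n$. The Cauchy-side half is immediate from the bound $|(\log g_1)'| \le \la_1$ supplied by \eqref{geun}, which yields $(\log g_1)'(x) \ge -\la_1$ uniformly in $x$. I therefore focus on the strict bound $(\log g_0)'(x) < -\la_1$ on $J_n$.

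First I would derive a closed form for $g_0 = \phi * \text{Lap}(\la_0)$. Splitting $g_0(x) = (\la_0/2) \int \phi(x-u) e^{-\la_0|u|}\,du$ at $u=0$ and completing the square in each piece gives
$$ g_0(x) = \tfrac{\la_0}{2}\,e^{\la_0^2/2}\bigl[ e^{-\la_0 x}\bar\Phi(\la_0 - x) + e^{\la_0 x}\bar\Phi(\la_0 + x) \bigr]. $$
Differentiating, the two terms stemming from the derivatives of $\bar\Phi$ both equal $\phi(x) e^{-\la_0^2/2}$ up to sign, hence cancel, and I obtain the compact expression
$$ (\log g_0)'(x) = \la_0\,\frac{e^{\la_0 x}\bar\Phi(\la_0+x) - e^{-\la_0 x}\bar\Phi(\la_0 - x)}{e^{\la_0 x}\bar\Phi(\la_0+x) + e^{-\la_0 x}\bar\Phi(\la_0 - x)}. $$

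Rearranging, the target inequality $(\log g_0)'(x) < -\la_1$ becomes
$$ \frac{\bar\Phi(\la_0 - x)}{\bar\Phi(\la_0 + x)} > \frac{\la_0+\la_1}{\la_0-\la_1}\, e^{2\la_0 x}. $$
Since on $J_n$ both arguments $\la_0 \pm x$ are of order $\la_0 \asymp n$ (hence large for $n$ large), I would apply the standard two-sided Mills bounds $y\phi(y)/(y^2+1) < \bar\Phi(y) < \phi(y)/y$ to the ratio of tails, combined with the identity $\phi(\la_0-x)/\phi(\la_0+x) = e^{2\la_0 x}$, to reduce this to the purely algebraic inequality
$$ \frac{(\la_0-x)(\la_0+x)}{(\la_0-x)^2+1} > \frac{\la_0+\la_1}{\la_0-\la_1}. $$
Clearing denominators and simplifying, this is equivalent to $2\la_0(x-\la_1)(\la_0-x) > \la_0 + \la_1$, which holds with considerable slack throughout $J_n$: indeed $x - \la_1 \ge \la_1 > 0$ and $\la_0 - x \asymp \la_0$, so the left-hand side is at least of order $\la_0^2 \la_1$ while the right-hand side is of order $\la_0$.

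The delicate point I expect to be the main obstacle is that near the left endpoint $x = 2\la_1 = 0.1$, both sides of the algebraic inequality above are of the form $1 + O(\la_1/\la_0)$, so the Mills-ratio reduction cannot afford to lose a constant factor; the specific two-sided inequality $y\phi(y)/(y^2+1) < \bar\Phi(y) < \phi(y)/y$ is exactly sharp enough, and it is crucial that the spike scale $\la_0$ from \eqref{param} is much larger than the slab scale $\la_1$. Combining the established bound $(\log g_0)'(x) < -\la_1 \le (\log g_1)'(x)$ on $J_n$ yields $\be'(x) > 0$ throughout $J_n$, completing the proof.
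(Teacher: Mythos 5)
Your proof is correct. You and the paper both reduce the claim to comparing $(\log g_0)'$ and $(\log g_1)'$, with the Cauchy side handled by the same Lipschitz bound $|(\log g_1)'| \le \la_1$ from \eqref{geun}; you phrase the comparison directly in terms of the sign of $\be'$, while the paper runs a mean-value-theorem argument on $\log(g_0(x)/g_0(y))$ versus $\log(g_1(x)/g_1(y))$ for $x\le y$, which is the same thing. Where you genuinely diverge is in how the bound on $(\log g_0)'$ is obtained. You exploit the closed form \eqref{gze} for $g_0$ to write $(\log g_0)'(x)$ as $\la_0$ times a signed ratio of scaled Gaussian tails, feed in the two-sided Mills bounds $y\phi(y)/(y^2+1)<\bar\Phi(y)<\phi(y)/y$, and land on the purely algebraic inequality $2\la_0(\la_0-x)(x-\la_1)>\la_0+\la_1$, which you verify correctly (it holds with an $O(\la_0^2\la_1)$ versus $O(\la_0)$ margin, and even at the tight endpoint $x=2\la_1$ both sides are $1+O(\la_1/\la_0)$ with the left exceeding the right by a factor in the lower-order term). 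The paper instead proves, in its Lemma~\ref{pr3}, the pointwise bound $(\log g_0)'(x)\le -x/2$ on $[0,\sqrt{2\log(\la_0/L_0)}]$ by splitting $g_0'=\la_0(g_{o+}-g_{o-})$, using $e^{2xu}\ge 1+2xu$ and the $1$-Lipschitz property of $\phi$ to lower-bound the positive part, and dividing by the approximation $|g_0-\phi|\le\la_0^{-2}$. The paper's bound is stronger for $x>2\la_1$ and independent of $\la_1$, but yours, $(\log g_0)'(x)<-\la_1$, is exactly what the proposition needs, and your Mills-ratio route is more mechanical and self-contained (it bypasses Lemma~\ref{pr3} entirely). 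Both are valid; you give up a reusable intermediate lemma in exchange for a tighter-to-the-point calculation.
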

We next state and prove some Lemmas used in the proof of Proposition \ref{pr1} below. 
\begin{lem}\label{pr0}
The convolution $g_0=\phi*\ga_0$ satisfies $g_0''=\lambda_0^2(g_0-\phi)\label{gsecond}$ as well as
\[ 
\frac{1}{g_0}\leq \frac{2}{\phi} 
\label{halp}
\qquad \text{and} \qquad 
| g_0-\phi | \leq \frac{1}{\la_0^2}.
\]
\end{lem}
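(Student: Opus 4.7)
My plan is to prove the three claims in cascade: first the ODE, which is the central identity, then the uniform $L^\infty$ bound as a direct consequence of it, and finally the pointwise lower bound via a separate argument.

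\textbf{Step 1 (the ODE $g_0'' = \la_0^2(g_0 - \phi)$).} The key observation is that the Laplace density $\ga_0(u) = (\la_0/2)e^{-\la_0|u|}$ satisfies the classical relation $\ga_0''(u) = \la_0^2 \ga_0(u)$ on $\RR \setminus \{0\}$, while its derivative has a jump of size $\ga_0'(0^+) - \ga_0'(0^-) = -\la_0^2$ at the origin. Distributionally this reads $\ga_0'' = \la_0^2 \ga_0 - \la_0^2 \delta_0$, so convolving with $\phi$ yields $g_0'' = \la_0^2(g_0 - \phi)$. Concretely, I would differentiate $g_0(x) = \int \phi(x-u)\ga_0(u)\,du$ twice under the integral sign and integrate by parts, splitting the $u$-integral at $0$ to accommodate the jump of $\ga_0'$; the boundary contributions at $0^-$ and $0^+$ combine to produce precisely $-\la_0^2 \phi(x)$.

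\textbf{Step 2 (uniform bound $|g_0 - \phi| \le 1/\la_0^2$).} The ODE of Step 1 rearranges to $g_0 - \phi = g_0''/\la_0^2$. Writing $g_0'' = \phi'' * \ga_0$ (one can move the derivatives to $\phi$ since $\phi$ is smooth) and using Young's inequality, $\|g_0''\|_\infty \le \|\phi''\|_\infty \|\ga_0\|_1 = \|\phi''\|_\infty$. Since $\phi''(y) = (y^2-1)\phi(y)$, a standard one-variable computation gives $\sup_y |\phi''(y)| = \phi(0) = 1/\sqrt{2\pi} < 1$. Hence $|g_0 - \phi| \le 1/\la_0^2$.

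\textbf{Step 3 (pointwise lower bound $g_0 \ge \phi/2$).} The ODE alone is not sufficient here: the bound $g_0 \ge \phi - 1/\la_0^2$ it produces fails to give $g_0 \ge \phi/2$ in the Gaussian tails where $\phi$ is negligible. Instead, I would use the identity
\[ \frac{g_0(x)}{\phi(x)} = \int e^{xu - u^2/2}\,\ga_0(u)\,du = E\left[e^{xU - U^2/2}\right], \quad U \sim \ga_0, \]
valid for every real $x$. Applying the elementary inequality $e^y \ge 1 + y$ and using the symmetry of $\ga_0$ (so $E[U] = 0$) together with the Laplace variance $E[U^2] = 2/\la_0^2$, one obtains $g_0(x)/\phi(x) \ge 1 - 1/\la_0^2$ for every $x$. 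With the paper's choice $\la_0 = L_0 n$ and $L_0 = 5\sqrt{2\pi}$, this is $\ge 1/2$ for all $n \ge 1$.

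\textbf{Main obstacle.} No step is genuinely hard, but the delicate point is the handling of the non-differentiability of $\ga_0$ at the origin in Step 1: if the jump contribution is ignored, one would incorrectly obtain $g_0'' = \la_0^2 g_0$, missing the essential $-\la_0^2 \phi$ term that both drives Step 2 and underlies the role played by $g_0$ throughout the SSL analysis.
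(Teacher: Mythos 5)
Your proof is correct, and Steps 2 and 3 take a genuinely different route from the paper's. For the bound $|g_0-\phi|\le\la_0^{-2}$, the paper uses a second-order Taylor expansion of $\phi$ with integral remainder: writing $\phi(x-u)-\phi(x)=ux\phi(x)+\tfrac12 u^2(c^2-1)\phi(c)$, the linear term vanishes by symmetry of $\ga_0$ and the remainder is bounded using $\int u^2\ga_0(u)\,du=\la_0^{-2}$ together with $|\phi''|\le 1$; you instead rearrange the ODE as $g_0-\phi=g_0''/\la_0^2$ and invoke Young's inequality, which gives the same bound (even with the slightly sharper constant $\phi(0)$). Both arguments are equivalent in substance; yours is perhaps a touch more modular since it feeds off the ODE you have just established. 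For the lower bound $g_0\ge\phi/2$, the two proofs diverge more sharply. The paper evaluates $g_0$ explicitly at $x=0$ via the closed form in \eqref{gze}, deduces $g_0(0)/\phi(0)\ge 1/2$ from Mills-ratio bounds on $\psi$, and then imports the fact that $g_0/\phi$ is increasing on $\RR^+$ from Lemma 1 of \cite{js04} to propagate the bound to all $x$. You instead exponentiate the likelihood ratio $g_0(x)/\phi(x)=E[e^{xU-U^2/2}]$ and apply $e^y\ge 1+y$ together with $E[U]=0$, $E[U^2]=2/\la_0^2$, yielding the pointwise bound $g_0/\phi\ge 1-\la_0^{-2}$ uniformly in $x$. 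This is more elementary and self-contained (it needs neither the explicit formula for $g_0$ nor the monotonicity of $g_0/\phi$), and it directly produces a bound that holds for all $x$ rather than propagating from the origin. The only cosmetic gap is the unproved claim $\sup_y|\phi''(y)|=\phi(0)$, but this is an easy calculus exercise (the local maximum of $(y^2-1)\phi(y)$ on $|y|>1$ is at $y=\sqrt 3$ with value $2\phi(\sqrt 3)<\phi(0)$) and in any case one only needs $\|\phi''\|_\infty\le 1$, which the paper also uses implicitly.
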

\begin{proof}
The first identity  follows by differentiation. 
One computes $g_0(x)$ by separating the integral in a positive and negative part to get, for any real $x$, 
\begin{equation} \label{gze} 
g_0(x)=\frac{\lambda_0 e^{\frac{\lambda_0^2}{2}}}{2\sqrt{2\pi}}\left[e^{\lambda_0 x}\psi(\lambda_0+x)+e^{-\lambda_0 x}\psi(\lambda_0-x)\right].
\end{equation} 
Now combining the standard inequality 
$(1-x^{-2})e^{-x^2/2}\le  x\psi(x) \leq e^{-x^2/2},$ for $x>0$,
with the expression of $g_0(0)$ obtained from \eqref{gze}, 
we get  $\frac{1}{2}\leq \frac{g_0}{\phi}(0) \leq 1$ for large enough $n$. By \cite{js04}, Lemma 1, the function  $g_0/\phi$ is increasing, which implies the first inequality of the lemma. 

The approximation property of $\phi$ by $g_0$ is obtained by a Taylor expansion. 
For any  $x,u\in\RR$, there exists $c$ between $x$ and $x-u$ such that $\phi(x-u)-\phi(x)=ux\phi(x)+u^2(c^2-1)\phi(c)/2$, so that
$$2(g_0(x)-\phi(x))= \di \int (2ux\phi(x)+u^2(c^2-1)\phi(c))\ga_0(u)du = \di \int u^2(c^2-1)\phi(c)\ga_0(u)du,$$
which is further bounded in absolute value by 
$\di \int u^2|c^2-1|\phi(c)\ga_0(u) du \leq  \int u^2\ga_0(u) du = \la_0^{-2}$.
\end{proof}

\begin{lem} \label{pr3}
Let $L_0=5\sqrt{2\pi}$. Then for all $x \in [0;\sqrt{2\log(\la_0/L_0)}]$, 
\[(\log g_0)'(x) \le -x/2.\]
\end{lem}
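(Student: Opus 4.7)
The plan is to work with the explicit formula \eqref{gze} for $g_0$, reduce the inequality $(\log g_0)'(x)\le -x/2$ to a monotonicity statement for an auxiliary one-variable function, and close the argument by a sharp Mill's ratio estimate. Writing $g_0(x) = K[A(x)+B(x)]$ with $A(x) = e^{\la_0 x}\psi(\la_0+x)$, $B(x) = e^{-\la_0 x}\psi(\la_0-x)$ and $\psi(y) = \int_y^\infty e^{-u^2/2}du$, the derivatives of $A$ and $B$ are readily computed via $\psi'(y) = -e^{-y^2/2}$, and the residual exponentials cancel to give $g_0'(x) = K\la_0[A(x)-B(x)]$. The target inequality therefore rearranges into $(2\la_0-x)B(x)\ge(2\la_0+x)A(x)$, and multiplying both sides by $e^{\la_0 x+(\la_0-x)^2/2}$ puts it into the symmetric form
\[
H(\la_0-x)\ge H(\la_0+x),\qquad H(\la):=(\la_0+\la)\,e^{\la^2/2}\psi(\la).
\]

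It is thus enough to prove that $H$ is non-increasing on $[\la_0-\sqrt{2\log n},\ \la_0+\sqrt{2\log n}]$, which contains all the $\la_0\pm x$ at stake. A direct computation gives $H'(\la)=[1+\la(\la_0+\la)]\,e^{\la^2/2}\psi(\la)-(\la_0+\la)$, so the sign of $H'$ is controlled by an upper bound on $e^{\la^2/2}\psi(\la)$. Via the substitution $u=\la+s$ in the definition of $\psi$, one writes $e^{\la^2/2}\psi(\la)=\int_0^\infty e^{-\la s-s^2/2}\,ds$, and the elementary bound $1-e^{-s^2/2}\ge s^2/2-s^4/8$ (a one-line Taylor estimate, valid for all $s\ge 0$) gives, after integrating against $\la e^{-\la s}$,
\[
e^{\la^2/2}\psi(\la)\le \frac{1}{\la}-\frac{1}{\la^3}+\frac{3}{\la^5}.
\]

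Plugging this bound into the condition $H'(\la)\le 0$ and expanding reduces it to the polynomial inequality $\la_0\la(\la^2-3)\ge 2\la^2+3$. Since $\sqrt{2\log n}\le \la_0/2$ (because $\la_0 = L_0 n$ with $L_0 = 5\sqrt{2\pi}$), one has $\la\ge \la_0/2\ge L_0/2 > 6$ throughout the relevant interval, so the inequality is immediate from $\la_0\la(\la^2-3)\ge(\la_0^2/2)(\la^2/4)\ge 3\la^2\ge 2\la^2+3$.

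The main technical point is to use a sufficiently sharp upper bound on $\psi$: the naive Mill's ratio $\psi(\la)\le e^{-\la^2/2}/\la$ is barely insufficient for $\la$ near $\la_0$, as it leaves a residual term of order $1/\la$ with the wrong sign in the resulting inequality. Retaining the first correction $-\la^{-3}$ of the asymptotic expansion (with the third-order term $+3\la^{-5}$ kept only to preserve the upper-bound direction, thanks to the alternating nature of the series) is the essential ingredient that makes the reduction work.
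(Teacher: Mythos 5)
Your proof is correct, and it takes a genuinely different route from the paper's. The paper works directly with the convolution integral $g_0=\phi*\ga_0$, splitting it into contributions from $u>0$ and $u<0$ (denoted $g_{o\pm}$), observing that $g_0'=\la_0(g_{o+}-g_{o-})$, and then bounding the difference $g_{o+}-g_{o-}$ from above via the crude inequality $e^v\ge 1+v$; it finally divides by $g_0$ using the approximation $|g_0-\phi|\le \la_0^{-2}$ from Lemma \ref{pr0} and checks the resulting rational bound $-x\frac{\phi(x)-2\la_0^{-1}}{\phi(x)+\la_0^{-2}}\le -x/2$ on the interval where $\phi(x)\ge 5/\la_0$ (which is exactly where the constant $L_0=5\sqrt{2\pi}$ enters). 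You instead start from the closed-form expression \eqref{gze}, exploit the cancellation $A'+B'=\la_0(A-B)$, and reduce the whole statement to the monotonicity of the single-variable function $H(\la)=(\la_0+\la)e^{\la^2/2}\psi(\la)$; the decisive ingredient becomes the two-term Mills-ratio expansion $e^{\la^2/2}\psi(\la)\le \la^{-1}-\la^{-3}+3\la^{-5}$, after which everything collapses to a trivial polynomial inequality in the regime $\la\ge\la_0/2>6$. Your reduction to $H$ is conceptually cleaner and makes the symmetric structure $(2\la_0-x)B\ge(2\la_0+x)A$ transparent; the price is a slightly sharper analytic bound on $\psi$, whereas the paper gets by with only $e^v\ge 1+v$ and a Taylor comparison between $g_0$ and $\phi$. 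Both arguments hinge on the same numerical input (the choice of $L_0$ guaranteeing the interval stays well inside $[0,\la_0/2]$), and both hold for all $n\ge 1$ despite the paper's cautionary "$n$ large enough."
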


\begin{proof}

Let $g_{o+}(x)=\displaystyle \int^{\infty}_0 \phi(v+x) \ga_0(v)dv$ and $g_{o-}(x)=\displaystyle \int_{-\infty}^0 \phi(v+x) \ga_0(v)dv$.
First we check that for any $x$ in the prescribed interval, we have
\[  \la_0(g_{o+}-g_{o-})(x) \le -x(\phi(x)-2/\la_0)\le 0.\]
For any real $x$, {using the inequality $e^v\ge 1+v$}, 
\begin{align*}
g_{o-}(x) = & 
 \int_0^{\infty} \phi(x-u) \ga_0(u)du =  \int_0^{\infty} \phi(x+u)e^{2xu} \ga_0(u)du\\
\geq &\ \  \int_0^{\infty} \phi(x+u)(1+2xu) \ga_0(u)du\\
\geq &\ \  g_{o+}(x)+\la_0 x \int_0^{\infty} u \phi(x+u) e^{-\la_0 u}du.
\end{align*}  
Setting $\Delta(x)=\di \int_0^{\infty} u \phi(x+u) e^{-\la_0 u}du$, one can write
$$\ba 
\Delta(x)&=&\displaystyle \int_0^{\infty} u (\phi(x+u)-\phi(x)) e^{-\la_0 u}du+\phi(x)\int_0^{\infty} u  e^{-\la_0 u}du\\
&=&\displaystyle \int_0^{\infty} u (\phi(x+u)-\phi(x)) e^{-\la_0 u}du+\phi(x)/\la_0^2.\\
\ea$$
As $\phi$ is $1$--Lipshitz,  one can bound from below 
$\phi(x+u)-\phi(x) \ge -u$, which leads to, for any $x\ge 0$,
 \[ \Delta(x)\ge  -\int_0^{\infty} u ^2 e^{-\la_0 u}du+\phi(x)/\la_0^2 \geq-2/\la_0^3+\phi(x)/\la_0^2.\]
 This leads to inequality on $g_{o+}-g_{o-}$ above, using that $x$ belongs to the prescribed interval to get the nonpositivity. 
From this one deduces
\[ g_0'(x) =   \la_0 (g_{o+}-g_{o-})(x)\le -x(\phi(x)-2/\la_0). \]
This now implies 
\[ \frac{g_0'}{g_0}(x) \le -x \frac{\phi(x)-2\la_0^{-1}}{\phi(x) {+}\la_0^{-2}}  \]
On the prescribed interval $\phi(x)\ge 5/\la_0$, so using that $t\to (t-a)/(t+b)$ is increasing, 
\[ \frac{g_0'}{g_0}(x)
\le -x \frac{5\la_0^{-1}-2\la_0^{-1}}{5\la_0^{-1}+\la_0^{-2}}
= -\frac{3x}{5+\la_0^{-1}}\le -\frac{x}{2},
\]   
for large enough $n$, which concludes the proof.
\end{proof} 
%

\begin{proof}[Proof of Proposition \ref{pr1}]

We will firstly note that if $G_1$ has a Cauchy($1/\la_1$) law, 
\begin{equation} \label{geun}
|(\log g_1)'(x)|\leq \la_1.
\end{equation}
Indeed, for any real $x$, recalling that $\ga_1(x)=(\la_1/\pi)(1+\la_1^2x^2)^{-1}$, one sees that $\ga_1'(x)/\ga_1(x)=(-2\la_1^2x)/(1+2\la_1^2x^2)$ and $|\ga_1'(x)/\ga_1(x)|\leq 2\sqrt{2}\la_1/3$. This implies \eqref{geun}, as
\begin{align*}
 |(\log g_1)'(x)| & =|\int \phi(x-u)\ga_1'(u)du|/g_1(x)\\
&\leq \frac{2\sqrt{2}}{3}\la_1 \int \phi(x-u)\ga_1(u)du/g_1(x) \leq \frac{2\sqrt{2}}{3}\la_1 \leq \la_1.
\end{align*}
Let $(x,y) \in [2\la_1;\la_0/4]^2$ with $x \leq y$.
Using {Lemma} \ref{pr3} one can find $c \in [x;y]$ with $\log(g_0(x)/g_0(y))=(x-y) (\log g_0)'(c) \geq (x-y)(-c/2) \geq (y-x)x/2$. On the other hand, 
by \eqref{geun} one deduces that for some $c \in [x;y]$, we have $\log(g_1(x)/g_1(y))=(x-y) (\log g_1)'(c) \leq (y-x)\la_1$. Thus for any $x,y$ as before,
\[\frac{g_1(x)}{g_1(y)} \leq e^{(y-x)\la_1} \qquad\text{and}\qquad e^{(y-x)\frac{x}{2}} \le \frac{g_0(x)}{g_0(y)}.
 \]
As $x \geq 2 \la_1$ by assumption, this leads to the announced inequality.
\end{proof}

\begin{lem}\label{2}
For $n$ large enough, recalling that $\la_0$ depends on $n$, we have 
\begin{align} 
(\log g_0)'(x)  & \ge -x \qquad \text{for any } x>0, \label{lax} \\
g_0(x)  & \le 2 \phi(x)\qquad  \text{for any } 0\le x\le \la_0/2, \label{dizuit}\\
g_0(x) & \lesssim \ga_0(x) \qquad  \text{for any } x \geq {\la_0}/{8}. \label{dizneuf} 
\end{align} 
\end{lem}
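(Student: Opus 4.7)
The plan is to address the three inequalities in sequence, each leveraging the convolution structure $g_0 = \phi*\gamma_0$ together with the closed-form expression
\[ g_0(x) = \frac{\lambda_0 e^{\lambda_0^2/2}}{2\sqrt{2\pi}}\bigl[e^{\lambda_0 x}\psi(\lambda_0+x) + e^{-\lambda_0 x}\psi(\lambda_0-x)\bigr] \]
already derived as \eqref{gze} in the proof of Lemma~\ref{pr0}, and the Mills-ratio bound $\psi(y) \le e^{-y^2/2}/y$ valid for $y > 0$.

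For \eqref{lax}, I would differentiate under the integral sign to get
\[ g_0'(x) = -x\,g_0(x) + \int u\,\phi(x-u)\gamma_0(u)\,du, \]
so that $(\log g_0)'(x) + x$ equals $g_0(x)^{-1}$ times the last integral. Splitting the integral according to the sign of $u$ and invoking the symmetry of $\gamma_0$, it equals $\int_0^\infty u\bigl[\phi(x-u)-\phi(x+u)\bigr]\gamma_0(u)\,du$, which is nonnegative for $x>0$ because $|x-u|\le|x+u|$ implies $\phi(x-u)\ge\phi(x+u)$. This yields \eqref{lax}.

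For \eqref{dizuit}, on $[0,\lambda_0/2]$ both $\lambda_0\pm x$ are positive, so the Mills bound applies to both terms in \eqref{gze}. The exponents simplify via $-\lambda_0^2/2\pm\lambda_0 x-(\lambda_0\pm x)^2/2 = -x^2/2$, giving
\[ g_0(x) \le \frac{\lambda_0}{2\sqrt{2\pi}}\,e^{-x^2/2}\!\left(\frac{1}{\lambda_0+x}+\frac{1}{\lambda_0-x}\right) = \phi(x)\,\frac{\lambda_0^2}{\lambda_0^2-x^2}, \]
and the last factor is at most $4/3<2$ on $[0,\lambda_0/2]$.

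For \eqref{dizneuf}, I would start again from \eqref{gze}, this time rewriting the ratio
\[ \frac{g_0(x)}{\gamma_0(x)} = \frac{e^{\lambda_0^2/2}}{\sqrt{2\pi}}\bigl[e^{2\lambda_0 x}\psi(\lambda_0+x) + \psi(\lambda_0-x)\bigr] \quad (x>0), \]
then apply $\psi(y)\le e^{-y^2/2}/y$ to the first term and to the second whenever $x<\lambda_0$ (and $\psi\le\sqrt{2\pi}$ for $x\ge\lambda_0$). After cancellation both terms reduce to expressions of the form $e^{\lambda_0 x-x^2/2}/(\lambda_0\pm x)$. I expect the main obstacle to lie in the regime $\lambda_0/8\le x\le 2\lambda_0$, where $\lambda_0 x-x^2/2$ is positive; the strategy would be to split at $x=2\lambda_0$ so that the exponent becomes nonpositive on the upper range and the bound follows directly, while on $[\lambda_0/8,2\lambda_0]$ one tracks the polynomial factors in $\lambda_0$ that the convention on $\lesssim$ allows to be absorbed, consistent with the way \eqref{dizneuf} is ultimately invoked only on $|x|\ge\lambda_0/2$ in the proof of Lemma~\ref{lemfiaL}.
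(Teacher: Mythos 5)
Your treatment of \eqref{lax} is the same as the paper's: both write $(\log g_0)'(x)+x$ as $\int u\,\gamma_{0,x}(u)\,du$ and show this is nonnegative for $x>0$ via the antisymmetry $\phi(x-u)\ge\phi(x+u)$ for $u>0$.

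Your proof of \eqref{dizuit} is a genuine, and cleaner, variant. Applying the Mills bound to both terms of \eqref{gze} collapses to $g_0(x)\le\phi(x)\,\lambda_0^2/(\lambda_0^2-x^2)\le\tfrac43\phi(x)$ on $[0,\lambda_0/2]$. The paper instead re-derives a separate three-term identity for $g_0$, bounds each piece, and lands on the (slightly worse) constant $2$; your route stays entirely within \eqref{gze} and gives the sharper constant.

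For \eqref{dizneuf}, you reach the correct intermediate expression, but the magnitude assessment that follows is where the argument breaks. After Mills the right-hand side is $e^{\lambda_0 x-x^2/2}/(\sqrt{2\pi}(\lambda_0\pm x))$, and this is not ``positive with polynomial-in-$\lambda_0$ slack'': the exponent $\lambda_0 x-x^2/2$ equals $\lambda_0^2/2$ at $x=\lambda_0$, so the displayed quantity is of order $e^{\lambda_0^2/2}/\lambda_0=e^{\Theta(n^2)}/n$ on the middle of the interval, which no universal constant can absorb. The split at $2\lambda_0$ does not rescue the tail either: in your own formula $\frac{e^{\lambda_0^2/2}}{\sqrt{2\pi}}\psi(\lambda_0-x)\to e^{\lambda_0^2/2}$ as $x\to\infty$, so $\lim_{x\to\infty}g_0(x)/\gamma_0(x)=e^{\lambda_0^2/2}$ and \eqref{dizneuf} with a universal constant cannot hold literally as stated. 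The argument that does close, and which the paper's own proof in fact establishes, is the weaker pair of statements that the downstream applications actually use: on $\lambda_0/8\le x\le 3\lambda_0/4$, both Mills bounds cancel the $e^{\lambda_0^2/2}$ prefactor against the $e^{-(\lambda_0\pm x)^2/2}$ factor and leave $g_0(x)\lesssim\phi(x)$ (with $\lambda_0\pm x\gtrsim\lambda_0$ giving the constant); and on $x\ge 3\lambda_0/4$, the second term of \eqref{gze} is bounded via $\psi\le\sqrt{2\pi}$ by $\lambda_0 e^{\lambda_0^2/2-\lambda_0 x}\le\lambda_0 e^{-\lambda_0 x/3}$, which is again exponentially small for $x\ge\lambda_0/8$. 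Your remark that \eqref{dizneuf} is only ever invoked for $|x|\ge\lambda_0/2$ is the right instinct; the target to aim for there is an exponentially small bound on $g_0$, of the form $g_0(x)\lesssim\phi(x)+\lambda_0 e^{-\lambda_0 x/3}$, not a comparison with $\gamma_0$ at unit constant.
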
 
\begin{proof} For any real $x$, we set $\displaystyle \mu_{0,1}(x)=\int u \frac{\phi(x-u)\gamma_0(u)}{g_0(x)}du$, 
 the expectation of $\ga_{0,x}$.
A direct computation shows, for $x>0$ that $(\log g_0)'(x)=-x+\mu_{0,1}(x)$. But
$$\ba 
\mu_{0,1}(x)&=&\displaystyle \int_0^{\infty} u \frac{\la_0 \phi(x-u)e^{-\la_0 u}}{2g_0(x)}du+\int_{-\infty}^0 u \frac{\la_0 \phi(x-u)e^{\la_0 u}}{2g_0(x)}du\\
&=&\displaystyle \int_0^{\infty} u \frac{\la_0 e^{-\la_0 u}}{2g_0(x)}(\phi(x-u)-\phi(x+u))du
=\displaystyle \int_0^{\infty} u \frac{\la_0 e^{-\la_0 u}}{2g_0(x)}\phi(x+u)(e^{2xu}-1)du\geq0
\ea,$$
which leads to \eqref{lax}. 

For the second point, we  first prove the identity, for $x >0$,
 \[ \label{gzero} g_0(x)= \frac{e^{\la_0^2/2}}{\sqrt{2\pi}}\psi(\la_0) \ga_0(x) + \phi(x)\frac{\la_0}{2} \left( e^{{(\la_0-x)^2}/{2}}(\psi(\la_0-x)-\psi(\la_0))+ e^{{(\la_0+x)^2}/{2}}\psi(\la_0+x)\right).\]
Indeed,  $g_0(x)=\displaystyle \int_0^{\infty} \phi(u)(\ga_0(x+u)+\ga_0(x-u))du=\displaystyle \ga_0(x) \int_0^{\infty} \phi(u)e^{-\la_0 u}du+\int_0^{\infty} \phi(u)\ga_0(x-u)du$, for $x>0$. 
The first term equals  ${e^{\la_0^2/2}}\psi(\la_0) \ga_0(x)/{\sqrt{2\pi}}$. The second one equals 
\begin{align*}
\lefteqn{\int_{-x}^{\infty}\phi(x+v)\ga_0(v)dv=\phi(x)\int_{-x}^{\infty}e^{-\frac{v^2}{2}-vx}\ga_0(v)dv} \\
&= \di \phi(x) \frac{\la_0}{2}\left(\int_{-x}^0e^{-\frac{v^2}{2}-vx+\la_0v}dv+\int_{0}^{\infty}e^{-\frac{v^2}{2}-vx-\la_0v}dv\right)\\
&= \di \phi(x)\frac{\la_0}{2}\left(\int_{0}^xe^{-\frac{v^2}{2}+vx-\la_0v}dv+e^{\frac{(x+\la_0)^2}{2}}\int_{0}^{\infty}e^{-\frac{(v+x+\la_0)^2}{2}}dv\right)\\
&= \di \phi(x)\frac{\la_0}{2}\left(e^{\frac{(\la_0-x)^2}{2}}\int_{\la_0-x}^{\la_0}e^{-\frac{u^2}{2}}du+e^{\frac{(x+\la_0)^2}{2}}\psi(x+\la_0)\right)
\end{align*} 
which gives the announced identity. If $x\leq {\la_0}/{2}$, using the inequality 
$y\psi(y) \le e^{-y^2/2}$ for $y>0$, we have
\[ g_0(x)\leq \la_0^{-1}{\ga_0(x)}/{\sqrt{2\pi}}+\phi(x)({\la_0}/{2})\left[({\la_0-x})^{-1}
+{(\la_0+x)}^{-1}\right]. \]
This leads, using $\ga_0(x)/\la_0\le e^{-\la_0^2/2}$ for $x\le \la_0/2$, to $g_0(x)\leq \phi(x)(1/2+1+1/2)=2\phi(x)$.

For the third point, if $x\geq \la_0/8$, the first term is bounded as follows:
$$\ba 
{\la_0 e^{{\la_0^2}/{2}}}e^{\la_0 x} \psi(\la_0+x)&\leq&{\la_0 e^{{\la_0^2}/{2}}}e^{\la_0 x} {e^{-{\la_0^2}/{2}-{x^2}/{2}-\la_0 x}}{(\la_0+x)^{-1}}\\
&\leq&{\la_0 }(\la_0+x)^{-1} e^{-{x^2}/{2}} \le {\la_0 }{(9\la_0/8)^{-1}} e^{-{x^2}/{2}}.
\ea$$

We now bound $\psi(\la_0-x)$ from above by ${e^{-{\la_0^2}/{2}-{x^2}/{2}+\la_0 x}}{(\la_0-x)^{-1}} \leq {4e^{-{\la_0^2}/{2}-{x^2}/{2}+\la_0 x}}{\la_0^{-1}}$ if ${\la_0}/{8}\leq x \leq {3\la_0}/{4}$, which leads to $g_0(x) \lesssim \phi(x)$. 
If $x \geq {3\la_0}/{4}$ one bounds the second term by ${\la_0 e^{{\la_0^2}/{2}-\la_0 x}} \leq {\la_0 e^{{2\la_0 x}/{3}-\la_0 x}}\leq {\la_0 e^{-{\la_0 x}/{3}}}$,  
so that, for $x \geq {\la_0}/{8}$, \[
  g_0(x)\lesssim \ga_0(x). \qquad \qedhere
\]
\end{proof}

The next lemma is useful to control $\be$ outside $[2\la_1,\sqrt{2\log{n}}]$. 
\begin{lem}
\label{pr2}
Set $\la_1=0.05$. For $n$ large enough, for some $C>0$, we have
\begin{align*}
({g_1}/{g_0})(2\la_1) & < 0.25, \\
\be(x)& <0 \qquad & \text{ for all } x\in[0,2\la_1],\\
\be(x)& \gtrsim n/{\log n},  \qquad & \text{ for all } \sqrt{2\log n}\leq x 
\le \la_0/{2}, \\ 
 \be(x) & \gtrsim {e^{Cn^2}}\ga_1(n)/{n}    \qquad & \text{ for all } x \geq \la_0/{8}.
 \end{align*} 
\end{lem}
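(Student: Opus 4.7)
The plan is to prove each of the four bounds separately, using in every case upper/lower bounds on $g_0$ and $g_1$ coming from the previous lemmas (in particular Lemma \ref{pr0}, \eqref{lax}, \eqref{dizuit}, \eqref{dizneuf}) combined with an elementary lower bound $g_1(x)\gtrsim \gamma_1(x)$ for sufficiently large $x$, obtained by restricting the convolution defining $g_1$ to a unit window $[x,x+1]$ and using that $\gamma_1(x+1)\asymp \gamma_1(x)$ for large $x$.

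For the first two bullets, the key point is that $2\lambda_1=0.1$ is small. I would combine the upper bound $g_1(x)\le \sup_u\gamma_1(u)=\lambda_1/\pi\approx 0.016$ valid for all real $x$ with the lower bound $g_0(x)\ge g_0(0)e^{-x^2/2}$ on $\mathbb{R}^+$, which follows by integrating \eqref{lax}. The explicit formula in \eqref{gze} together with the classical estimate $y\psi(y)\sim e^{-y^2/2}$ as $y\to\infty$ shows $g_0(0)\to \phi(0)=1/\sqrt{2\pi}\approx 0.399$ as $n\to\infty$ (so, for $n$ large, $g_0(0)\ge 0.35$, say). Since $e^{-2\lambda_1^2}\ge 0.99$, one gets $g_0(x)\ge 0.34$ on $[0,2\lambda_1]$, so $g_1/g_0 \le 0.016/0.34 < 0.05<0.25$ on the whole interval, yielding both bullets at once (noting that $(g_1/g_0)(x)<1$ is equivalent to $\beta(x)<0$).

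For the third bullet, on the interval $[\sqrt{2\log n},\lambda_0/2]$, I would combine the upper bound $g_0(x)\le 2\phi(x)$ from \eqref{dizuit} with the lower bound $g_1(x)\gtrsim \gamma_1(x)$, giving $g_1/g_0\gtrsim \gamma_1(x)/\phi(x)=\sqrt{2\pi}\lambda_1 e^{x^2/2}/[\pi(1+\lambda_1^2 x^2)]$. Differentiation of the logarithm shows that $x\mapsto e^{x^2/2}/(1+\lambda_1^2x^2)$ is increasing on $\mathbb{R}^+$ (since $\lambda_1=0.05$ makes $2\lambda_1^2<1$), so the minimum on the interval is at $x=\sqrt{2\log n}$. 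There $e^{x^2/2}=n$ and $1+\lambda_1^2 x^2\lesssim \log n$, which delivers $\beta(x)\gtrsim n/\log n$. For the final bullet, on $[\lambda_0/8,\infty)$ I would use \eqref{dizneuf} to get $g_0(x)\lesssim \gamma_0(x)$ and again $g_1(x)\gtrsim \gamma_1(x)$, so that $\beta(x)\gtrsim \gamma_1(x)/\gamma_0(x)-1$. A short computation of the log-derivative gives that $\gamma_1/\gamma_0$ is increasing on $\mathbb{R}^+$ (its log-derivative is $\lambda_0-2\lambda_1^2 x/(1+\lambda_1^2 x^2)\ge \lambda_0-\lambda_1>0$), so the minimum is at $x=\lambda_0/8$. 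Plugging in $\gamma_0(\lambda_0/8)=(\lambda_0/2)e^{-\lambda_0^2/8}$ and noting that $\lambda_0/8\asymp n$ implies $\gamma_1(\lambda_0/8)\asymp \gamma_1(n)$, one obtains $\gamma_1(\lambda_0/8)/\gamma_0(\lambda_0/8) \asymp (\gamma_1(n)/n)\,e^{\lambda_0^2/8}=(\gamma_1(n)/n)e^{Cn^2}$ with $C=25\pi/4$ (recall $\lambda_0=5\sqrt{2\pi}n$).

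The main obstacle is bookkeeping: point 1 requires an explicit numerical threshold, so I must keep the constants $\lambda_1=0.05$, $1/\sqrt{2\pi}$, and the correction factor $e^{-2\lambda_1^2}$ sharp enough to beat $0.25$, and the lower bound $g_1\gtrsim \gamma_1$ for large $x$ must be uniform enough to propagate through to the absolute constants in points 3 and 4. None of these steps involves a novel idea; the computations are all elementary once the right monotonicity (of $\gamma_1/\phi$ and $\gamma_1/\gamma_0$) is established, which is the technical backbone of the argument.
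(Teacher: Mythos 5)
Your proof is correct and follows broadly the same plan as the paper's: express $\beta+1=g_1/g_0$, pass from $g_0$ and $g_1$ to $\phi$, $\gamma_0$, $\gamma_1$ via Lemma \ref{pr0}, \eqref{dizuit} and \eqref{dizneuf}, and reduce each bound to an endpoint by a monotonicity argument. The one place you diverge substantively is the first bullet: the paper lower-bounds $g_0(2\la_1)$ by explicitly estimating the integral $\int e^{-(u-2\la_1)^2/2}e^{-\la_0|u|}du$ after completing the square, whereas you integrate the log-derivative bound \eqref{lax} to get $g_0(x)\ge g_0(0)e^{-x^2/2}$ and then read off $g_0(0)\to\phi(0)$ from \eqref{gze}; both are elementary, but your route is arguably lighter on algebra and makes the numerics transparent, so long as you keep the quantitative form of $g_0(0)\ge 0.35$ (for $n$ large). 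For the third bullet you establish monotonicity of $\gamma_1/\phi$ by hand, while the paper instead invokes the known monotonicity of $g_1/\phi$ (Lemma 1 of \cite{js04}) and only replaces $g_1$ by $\gamma_1$ at the single point $\sqrt{2\log n}$; the two choices are interchangeable, and your use of $g_1\gtrsim\gamma_1$ is in fact valid uniformly on $\RR^+$ (it is the standard $g_1\asymp\gamma_1$ the paper uses elsewhere), so there is no gap. The fourth bullet matches the paper's argument exactly, including the monotonicity of $\gamma_1/\gamma_0$ and the value of $C$.
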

\begin{proof}
1) We have $\di \frac{g_1}{g_0}(2\la_1)\leq \frac{\la_1\sqrt{2\pi}}{\la_0\int e^{-{(u-2\la_1)^2}/{2}}e^{-\la_0|u|}du}$.
For the denominator, we have 
$$\ba 
\di \int e^{-{(u-2\la_1)^2}/{2}}e^{-\la_0|u|}du&\geq&\di \int_0^{\infty}e^{-{(u-2\la_1)^2}/{2}-\la_0 u}du\\
&\geq&\di e^{{\la_0^2}/{2}-2\la_1\la_0}\int_0^{\infty}e^{-{(u-(2\la_1-\la_0))^2}/{2}}du\\
&\geq&{e^{-2\la_1^2}}\psi(\la_0-2\la_1)/{(\la_0-2\la_1)}\\
&\geq&{e^{-2\la_1^2}}{(\la_0-2\la_1)^{-1}}(1-{(\la_0-2\la_1)^{-2}})\\
&\geq&0.99{e^{-2\la_1^2}}{(\la_0-2\la_1)^{-1}} \text{for }n\text{ large enough}
\ea.$$
This implies $({g_1}/{g_0})(2\la_1)< 0.25$ for $\la_1=0.05$.
~~

2) Let $x \in [0,2\la_1]$, using Lemma \ref{pr0}, we have $\be \leq 2g_1/\phi-1$. As the last function is increasing as we know from the SAS case, we have $\be(x) \leq 2(g_1/\phi)(2\la_1)-1$. With \eqref{dizuit} we end up with $\be(x) \leq 4(g_1/\phi)(2\la_1)-1$, which is strictly negative by the first point.

3) Let $x \in [\sqrt{2\log n},\la_0/2]$. With \eqref{dizuit}, we have $\be(x) \geq (g_1/2\phi)(x)-1 \geq (g_1/2\phi)(\sqrt{2\log n})-1$, and as $g_1 \gtrsim \ga_1$, we end up with $\be(x) \gtrsim n/\log n$.

4) For $x\geq \la_0/8$, via \eqref{dizneuf} we have $\be(x)+1\geq (\ga_1/\ga_0)(x)\geq (\ga_1/\ga_0)(\la_0/8)$ which gives  the result.

\end{proof}

\subsection{Bounds on moments of the score function}\label{momL}

Recall that, for all $k \geq 1$, $\mu \in \R$ and $\alpha \in [0,1]$, $m_k(\mu,\alpha)=E[\be(Z+\mu)^k]$ where $Z \sim \mathcal{N}(0,1)$, 
and $\tilde{m}(\alpha)= - m_1(0,\alpha)= -2 \int_0^{\infty} \be(z,\alpha)\phi(z)dz$.

\begin{prop}\label{l1}
With $\kappa$ as in \eqref{tails}, there exist constants $D_1$ and $D_2$ such that for $\al \in (\mathcal{C}\log n/ n, 1]$, $D_1 \ze^{\kappa -1}g_1(\ze) \leq \tilde{m}(\alpha) \leq D_2 \ze^{\kappa -1}g_1(\ze)$.
Also, $c\leq \tilde{m}(1) \leq C$ with $c,C$ independent of $n$.
\end{prop}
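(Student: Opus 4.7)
The plan is to adapt the proof of Lemma \ref{lemmtilde} (the SAS case) to the SSL setting. The key enabler is Lemma \ref{2}, which gives $g_0 \asymp \phi$ on $[0, \la_0/2]$: since for any $\al \in (\mathcal{C}\log n/n, 1]$ the threshold $\zeta = \zeta(\al) \in J_n \subset [0, \sqrt{2\log n}]$ lies well inside $[0, \la_0/2]$, SAS-type computations transfer with controlled errors. I would exploit the evenness of $\be$ and $\phi$ to write $\tilde m(\al) = -2\int_0^\infty \be(z,\al)\phi(z)dz$ and split the integral at $\zeta$ and at $\la_0/2$.

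The main contribution is from $[0, \zeta]$. There $\al\be(z) \le 1$, so $\be(z,\al) = \be(z) - \al\be(z)^2/(1+\al\be(z))$, and the leading-order integrand is $\be(z)\phi(z) = (g_1/g_0 - 1)\phi \asymp g_1(z) - \phi(z)$ (via $g_0 \asymp \phi$, with multiplicative correction of size $O(1/\la_0^2)$ from Lemma \ref{pr0}). Since $\int_0^\infty g_1 = \int_0^\infty \phi = 1/2$, one has the half-line identity
\[
 -2\int_0^\zeta(g_1-\phi)(z)dz = 2\int_\zeta^\infty(g_1-\phi)(z)dz,
\]
which reduces the estimate to a tail computation. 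For $\kappa = 2$, $\int_\zeta^\infty g_1 \asymp \zeta g_1(\zeta)$ dominates $\int_\zeta^\infty \phi \asymp \phi(\zeta)/\zeta$: the latter is smaller by a factor $O(1/\zeta^2)$, using $\phi(\zeta) \asymp \al g_1(\zeta)$ (from $\be(\zeta) = 1/\al$ combined with $g_0(\zeta) \asymp \phi(\zeta)$). So this piece is $\asymp \zeta g_1(\zeta)$.

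The two remaining pieces are lower-order. In the saturated region $[\zeta, \la_0/2]$, $|\be(z,\al)| \le \al^{-1}$ yields a contribution $\leqa \bar\Phi(\zeta)/\al \asymp g_1(\zeta)/\zeta \ll \zeta g_1(\zeta)$. On $[\la_0/2, \infty)$, bounding $|\be(z,\al)| \le \al^{-1} \leqa n/\log n$ and $\int_{\la_0/2}^\infty \phi(z)dz \leqa e^{-\la_0^2/8}$ shows the contribution is $O(e^{-cn^2})$, negligible. Summing the three pieces gives $\tilde m(\al) \asymp \zeta^{\kappa-1}g_1(\zeta)$ with constants independent of $\al$. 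For the claim $c \le \tilde m(1) \le C$, applying the same analysis at $\al = 1$, the threshold $\zeta(1)$ is the smallest positive root of $\be(z) = 1$ and depends only on $\la_1$, not on $n$; hence $\zeta(1) g_1(\zeta(1))$ is a positive universal constant.

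The main obstacle I expect is the accounting of the $O(\al\be(z)^2)$ correction on $[0,\zeta]$ and the $O(1/\la_0^2)$ error introduced by the $g_0 \approx \phi$ substitution, particularly near $z = \zeta$ where $\al\be(z)$ approaches $1$. The choice $\la_0 = L_0 n$ makes these corrections super-polynomially smaller than the target $\zeta g_1(\zeta) \geqa 1/\sqrt{\log n}$, but the fact that the main term is produced by delicate cancellation (via the identity $\int_0^\infty(g_1 - \phi) = 0$) means the error terms must be handled carefully through direct second-order expansion of $\be(z,\al)$ in $\al\be(z)$ on appropriate subintervals of $[0,\zeta]$.
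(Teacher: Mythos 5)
Your outline is correct and uses a genuinely different decomposition from the paper. The paper writes $\tilde m(\al) = A + B + C$ with $A=-2\int_0^\infty \be(z)\phi(z)\,dz$ the ``naive'' score mean and $B,C$ the $\al$-dependent corrections $2\int \frac{\al\be^2}{1+\al\be}\phi$ on $[0,\zeta]$ and $[\zeta,\infty)$; the cancellation $\int_0^\infty(g_1-\phi)=0$ is invoked to show $A$ is a remainder, the correction $B$ is bounded via $\int_0^\zeta g_1^2/\phi \lesssim g_1(\zeta)^2/(\zeta\phi(\zeta))$ (a corollary of Lemma~4 in Johnstone--Silverman), and the \emph{correction} $C$, lower-bounded through $\int_\zeta^{\la_0/2}g_1$, carries the order $\zeta^{\kappa-1}g_1(\zeta)$. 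You instead split the integral of $-2\be(z,\al)\phi(z)$ over $[0,\zeta]$, $[\zeta,\la_0/2]$, $[\la_0/2,\infty)$ and use the same cancellation identity \emph{inside} the first piece to turn $-2\int_0^\zeta(g_1-\phi)$ into the tail $2\int_\zeta^\infty(g_1-\phi)\approx 2\int_\zeta^\infty g_1$, so the main term lives on $[0,\zeta]$ rather than in $C$. What your route buys is that the remaining two pieces are transparently small: $[\zeta,\la_0/2]$ contributes at most $\bar\Phi(\zeta)/\al \asymp g_1(\zeta)/\zeta$ (by the trivial bound $|\be(z,\al)|\le\al^{-1}$) and $[\la_0/2,\infty)$ is super-exponentially negligible, whereas the paper's error term $A$ contains a piece $2\int_{K\zeta}^\infty g_1(g_0-\phi)/g_0$ that the stated bound $6\int_{K\zeta}^\infty g_1$ leaves comparable in order to the main term, so its sign (or sharper constants) matters. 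The price you pay is that the $O(\al\be^2)$ correction on $[0,\zeta]$ (the paper's $B$) still has to be controlled for the upper bound, and you correctly flag this plus the $g_0\leftrightarrow\phi$ substitution error as the bookkeeping that needs care; with $\la_0 = L_0 n$ these errors are $O(1/(\al\zeta\la_0^2))$, dominated by $\zeta g_1(\zeta) \gtrsim 1/\sqrt{\log n}$. The treatment of $\al=1$ as the fixed-threshold endpoint $\zeta(1)$ is consistent with the paper's separate argument.
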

\begin{proof}

Recall that for $\al \in (\mathcal{C}\log n/ n, 1]$, we have $\ze=\be^{-1}(\al^{-1})$ and $\ze \leq \sqrt{2\log n}$.
$$\ba 
\tilde{m}(\alpha)&=& \di -2 \int_0^{\infty} \frac{\be(z)}{1+\alpha \be(z)}\phi(z)dz= \di -2 \int_0^{\infty} \be(z)\phi(z)dz+2 \int_0^{\infty} \frac{\alpha \be^2(z)}{1+\alpha \be(z)}\phi(z)dz\}\\
&=& \di -2 \int_0^{\infty} \be(z)\phi(z)dz+2 \int_0^{\zeta} \frac{\alpha \be^2(z)}{1+\alpha \be(z)}\phi(z)dz+2 \int_{\ze}^{\infty} \frac{\alpha \be^2(z)}{1+\alpha \be(z)}\phi(z)dz:=A+B+C                                                                  
\ea$$

\begin{itemize}
\item For the first term, with $K$ a positive constant one can write : 
$$\ba A&=& \di 2\int_0^{\infty} (\phi-\frac{g_1}{g_0}\phi)= \di 2\int_0^{\infty} (\phi-\frac{g_1}{g_0}(\phi-g_0+g_0))\\
&=& \di 2\int_0^{\infty} (\phi - g_1) + 2\int_0^{\infty} \frac{g_1(g_0-\phi)}{g_0}\\
&=&\di 0+ 2 \int_0^{K\ze}  \frac{g_1(g_0-\phi)}{g_0}+2 \int_{K\ze}^{\infty}  \frac{g_1(g_0-\phi)}{g_0}\\
&:=& (i) + (ii)
\ea$$

Using the fact that $g_1/\phi$ is increasing, we have 

$$\ba
|(i)|&\leq& \di {2}{\la_0^{-2}} \int_0^{K\ze}  {g_1}/{g_0} \leq {4}{\la_0^{-2}} \int_0^{K\ze}  {g_1}/{\phi}\\
&\leq&\di {4K\ze g_1(K\ze)}{\la_0^{-2} /\phi(K\ze)}\lesssim\di {K n^{K^2-2}\ze g_1(K\ze)}\\
\ea$$

Taking $K=6/5$, we end up with $|(i)|\lesssim \ze n^{-2/5}g_1(6\ze /5)$ and this term is strictly dominated by $\ze^{\kappa -1}g_1(\ze)$. By Lemma \ref{pr0}, and the fact that $g_1 \asymp \ga_1$, we have :

$$\ba |(ii)|&\leq& \di 2 \int_{K\ze}^{\infty}g_1(1+{\phi}/{g_0})\leq 6 \int_{K\ze}^{\infty}g_1\\
&\lesssim&(6\ze/5)^{\kappa -1}g_1(6\ze/5)$ using \eqref{tails}$
\ea$$ 
This term too is dominated by $\ze^{\kappa -1}g_1(\ze)$.

~~

\item For the second term, we use the fact that on $(0,\ze)$, $\alpha |\be| <1$, so $1+b_0 \leq 1 + \alpha \be \leq 2$, where $b_0 = {g_1(2\la_1)}/{2\phi(0)}-1$ does not depend on $n$, so that 

$$B \asymp \di \int_0^{\ze} \alpha \be^2(z) \phi(z)dz$$

We will now use the fact that, with $h:={g_1^2}/{\phi}$, $\int_{0}^{\ze}h(z)dz\leq {16}h(\ze)/{\ze}$. This is a direct corollary of lemma 4 in \citep{js04}. We have, also using \eqref{dizuit}: 

\begin{align*}
\lefteqn{\int_0^{\ze} \be^2(z) \phi(z)dz \lesssim \di \int_0^{\ze} ({g_1^2}/{g_0^2})\phi\lesssim \di \int_0^{\ze}{g_1^2}/{\phi}}\\
&&\lesssim \di {g_1^2(\ze)}/{(\ze \phi(\ze))} \lesssim {\be(\ze)g_1(\ze)}/{\ze}\lesssim {g_1(\ze)}{(\alpha \ze)^{-1}}  
\end{align*}

hence $B\lesssim {g_1(\ze)}{\ze^{-1}}$, dominated by $\ze^{\kappa -1}g_1(\ze)$.

~~

\item For the last term, we first use the fact that $\alpha \be(z) < 1 + \alpha \be(z)$ , so that : 
$C \lesssim \di \int_{\ze}^{\infty} \be(z)\phi(z)dz$.

$$\ba 
C &\lesssim& \di \int_{\ze}^{\infty} {g_1}\phi/{g_0} \lesssim \di \int_{\ze}^{\infty}g_1(z)dz$ using Lemma \ref{pr0}$\\
&\asymp& \ze^{\kappa -1}g_1(\ze) $ using \eqref{tails}$
\ea$$

For an upper bound we write $$C=\di 2 \int_{\ze}^{\la_0/2} \frac{\alpha \be^2(z)}{1+\alpha \be(z)}\phi(z)dz+2 \int_{\la_0/2}^{\infty} \frac{\alpha \be^2(z)}{1+\alpha \be(z)}\phi(z)dz=:(i)+(ii).$$

For the first term, using \eqref{dizuit}, we have for every $z \in [\ze,\la_0/2]$, $\be(z)\geq \frac{g_1}{2\phi}(z)-1 \geq \frac{g_1}{4\phi}(z)$ and $\al\frac{g_1}{4\phi}(z)\gtrsim \al \frac{n}{\log n} \gtrsim 1$, so that
$$\ba
(i)&\geq& \di 2 \int_{\ze}^{\la_0/2} \frac{\alpha (g_1^2/16\phi^2)(z)}{1+\alpha (g_1/4\phi)(z)}\phi(z)dz\\
&\gtrsim & \di \int_{\ze}^{\la_0/2} g_1(z)dz \gtrsim \ze^{\kappa-1}g_1(\ze)
\ea$$
For the second term, we have 
$$\ba 
(ii)&\lesssim& \di \int_{\la_0/2}^{\infty} \be(z)\phi(z)dz\\
&\lesssim&\di \int_{\la_0/2}^{\infty}g_1(z)dz \lesssim \la_0^{\kappa-1}g_1(\la_0)\lesssim \la_0^{-1}.
\ea$$
Putting the bounds together finally leads to $\tilde{m}(\alpha)\asymp g_1(\ze)\ze^{\kappa -1}$.

To prove that $\tilde{m}(1) \leq \phi(0)/g_1(2\la_1)$, write $\tilde{m}(1)=\di -2\int_0^{+\infty}\phi+2\int_0^{+\infty}\phi/(1+\be)$.

Now $\di \int_0^{+\infty}\phi/(1+\be)=\int_0^{2\la_1}\phi/(1+\be)+\int_{2\la_1}^{\la_0/2}\phi/(1+\be)+\int_{\la_0/2}^{+\infty}\phi/(1+\be)$.

Using that on $[0,2\la_1]$, $1+\be\geq 1+b_0=g_1(2\la_1)/2\phi(0)$ and \eqref{dizuit} and \eqref{dizneuf}, we have $$\ba \di \int_0^{+\infty}\phi/(1+\be)&\leq& \di \int_0^{2\la_1}\phi/(1+b_0)+\int_{2\la_1}^{\la_0/2}\phi^2/g_1+\int_{\la_0/2}^{+\infty}\ga_0\phi/g_1 \\
&\leq&\di \int_0^{2\la_1}\phi/(1+b_0)+\int_{2\la_1}^{+\infty}\phi^2/g_1+\int_{0}^{+\infty}\phi/g_1\leq C.  \ea $$ 

For the lower bound, recall that $\tilde{m}(1)=\di -2\int_0^{+\infty}\phi+2\int_0^{+\infty}\phi/(1+\be)$ and use Lemma \ref{pr0} to write $\di 2\int_0^{\infty}\phi/(1+\be)\geq \int_0^{+\infty}\phi^2/g_1$ which does not depend on $n$.
\end{itemize}
\end{proof}

\begin{prop}\label{l2} 
Let $\al \in [\mathcal{C}\log n/n,1]$.

1) For small enough $\alpha$, we have $m_2(0,\alpha) \lesssim {\tilde{m}(\alpha)}{(\alpha \ze^{\kappa})^{-1}}$

2) For $k=1$ or $2$, for all $\mu$ and all $\al$ small enough, $m_k(\mu,\al)\leq (\al \wedge |B_0|/(1+B_0))^{-k}$ with $B_0 = {g_1(0)}/{2\phi(0)}-1$.

\end{prop}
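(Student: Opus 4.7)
The plan is to first establish a global lower bound $\beta(x)\geq B_0$ on all of $\RR$, which is the key ingredient for both parts. By evenness of $\beta$ one can restrict to $x\geq 0$. On $[0,\lambda_0/2]$ I would use that the ratio $g_1/\phi$ is nondecreasing on $\RR^+$ (as in Lemma~1 of \cite{js04}, which applies to the convolution of $\phi$ with any symmetric density) and combine it with $g_0\leq 2\phi$ from \eqref{dizuit} to get $(g_1/g_0)(x)\geq (g_1/\phi)(x)/2\geq g_1(0)/(2\phi(0))=1+B_0$, i.e.\ $\beta(x)\geq B_0$. For $x\geq\lambda_0/8$, Lemma~\ref{pr2} ensures $\beta(x)>0>B_0$, so $\beta\geq B_0$ on all of $\RR$.

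For part~2, the preceding bound gives, for $\alpha$ small enough that $\alpha|B_0|\leq 1/2$, that $1+\alpha\beta\geq 1+B_0>0$; hence on $\{\beta\geq 0\}$, $|\beta(x,\alpha)|=\beta/(1+\alpha\beta)\leq 1/\alpha$, while on $\{\beta<0\}$, $|\beta(x,\alpha)|=|\beta|/(1-\alpha|\beta|)\leq|B_0|/(1-\alpha|B_0|)\leq|B_0|/(1+B_0)$, the last step using $\alpha\leq 1$. Therefore $|\beta(x,\alpha)|\leq(\alpha\wedge|B_0|/(1+B_0))^{-1}$ uniformly in $x$, and taking $k$th moments yields the claimed bound $m_k(\mu,\alpha)\leq E_\mu|\beta(X,\alpha)|^k\leq(\alpha\wedge|B_0|/(1+B_0))^{-k}$.

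For part~1, I would start from $m_2(0,\alpha)=2\int_0^\infty\beta(x,\alpha)^2\phi(x)dx$ (by evenness of $\beta$) and split the integral at $\zeta$ and $\lambda_0/2$. On $[0,\zeta]$, the bound $\beta\geq B_0$ gives $1+\alpha\beta\geq 1/2$ for small $\alpha$, so $\beta(x,\alpha)^2\lesssim\beta(x)^2$; combining with $g_0\geq\phi/2$ from Lemma~\ref{pr0} gives $\beta^2\phi\lesssim g_1^2/\phi$, and Lemma~4 of \cite{js04} (as used in the proof of Proposition~\ref{l1}) yields $\int_0^\zeta g_1^2/\phi\lesssim(g_1^2/\phi)(\zeta)/\zeta$; together with $(g_1/\phi)(\zeta)\leq 2(g_1/g_0)(\zeta)=2(1+1/\alpha)\lesssim 1/\alpha$, this is $\lesssim g_1(\zeta)/(\alpha\zeta)$. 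On $[\zeta,\lambda_0/2]$, $\beta(x,\alpha)\leq 1/\alpha$ and $\bar\Phi(\zeta)\lesssim\phi(\zeta)/\zeta$ give an integral $\lesssim\phi(\zeta)/(\alpha^2\zeta)$; the identity $\beta(\zeta)=1/\alpha$ combined with $g_0(\zeta)\asymp\phi(\zeta)$ on this range imply $g_1(\zeta)\asymp\phi(\zeta)/\alpha$, matching the previous bound. On $[\lambda_0/2,\infty)$ the Gaussian tail $\bar\Phi(\lambda_0/2)\lesssim e^{-\lambda_0^2/8}$ dominates the crude factor $\alpha^{-2}\leq(n/\log n)^2$, making the contribution negligible. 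Invoking $\tilde{m}(\alpha)\asymp\zeta g_1(\zeta)$ from Proposition~\ref{l1} (with $\kappa=2$) finally gives $m_2(0,\alpha)\lesssim g_1(\zeta)/(\alpha\zeta)\asymp\tilde{m}(\alpha)/(\alpha\zeta^\kappa)$.

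The main technical obstacle is the uniform lower bound $\beta\geq B_0$: since Proposition~\ref{pr1} only provides monotonicity of $\beta$ on $[2\lambda_1,\sqrt{2\log n}]$, we cannot conclude directly that the minimum of $\beta$ is attained at $0$. The argument sidesteps this by exploiting monotonicity of the auxiliary ratio $g_1/\phi$ together with the two-sided comparison $\phi/2\leq g_0\leq 2\phi$ on $[0,\lambda_0/2]$, while the large-$x$ range is handled by the explicit lower bound in Lemma~\ref{pr2}.
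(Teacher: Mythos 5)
Your proposal is correct and essentially the same as the paper's own proof. The only cosmetic differences are organizational: you isolate the global lower bound $\beta\geq B_0$ as a standalone first step (the paper derives it inline in part~2, using the same ingredients \eqref{dizuit}, the monotonicity of $g_1/\phi$, and Lemma~\ref{pr2} to reduce to $|t|<\la_0/2$), and for part~1 you split the integral at both $\zeta$ and $\la_0/2$, whereas the paper splits only at $\zeta$ and bounds the whole tail $[\zeta,\infty)$ at once via $\beta>0$ there (the $\la_0/2$ cutoff handling you do is subsumed in that). Both routes use the same key tools: $g_0\leq 2\phi$ on $[0,\la_0/2]$, $g_0\geq\phi/2$ (Lemma~\ref{pr0}), Lemma~4 of \cite{js04} for $\int_0^\zeta g_1^2/\phi$, and $\tilde m(\al)\asymp\zeta^{\kappa-1}g_1(\zeta)$ from Proposition~\ref{l1}, and arrive at the same intermediate bound $m_2(0,\al)\lesssim g_1(\zeta)/(\al\zeta)$.
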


\begin{proof}
1) Let $\alpha \in [0;1]$, we have 

$$ \ba 
m_2(0,\alpha) &=& \di 2\int_0^{\infty} \frac{\be^2(z)}{(1+\alpha \be(z))^ 2}\phi(z) dz \\
&=& \di 2\int _0^{\ze} \frac{\be^2(z)}{(1+\alpha \be(z))^ 2}\phi(z) dz + 2\int_{\ze}^{\infty} \frac{\be^2(z)}{(1+\alpha \be(z))^ 2}\phi(z) dz\\

\ea$$

For the first term, as in Proposition \ref{l1}, and using Proposition \ref{pr0}, we have $$\di \int _0^{\ze} \frac{\be^2(z)}{(1+\alpha \be(z))^ 2}\phi(z) dz \lesssim  \int_0^{\ze} \be^2(z) \phi(z) dz \lesssim {g_1(\ze)}{(\alpha \ze)^{-1}}$$

For the last term, by the fact that $\be$ is increasing on $ [\ze,\sqrt{2\log n}]$, \eqref{dizuit} and \eqref{dizneuf} we have that $\be>0$ on $[\ze,\infty]$ so that $$\di \int_{\ze}^{\infty} \frac{\be^2(z)}{(1+\alpha \be(z))^ 2}\phi(z) dz \lesssim 1/\al^2\int_{\ze}^{\infty} \phi(z) dz \lesssim \be^2(\ze){\phi(\ze)}/{\ze}\lesssim \be(\ze){g_1(\ze)}/{\ze}$$

hence $m_2(0,\alpha) \lesssim \frac{g_1(\ze)}{ \alpha\ze}$.
Yet $\tilde{m}(\alpha) \asymp \ze^{\kappa-1}g_1(\ze)$ when $\alpha \to 0$, which yields the first point.

2) Recall the definition $m_k(\mu,\al)= \int \left(\frac{\be(t)}{1+\al\be(t)}\right)^k \phi(t-\mu)dt$. 
If $\be(t)\geq 0$, $\left|\frac{\be(t)}{1+\al\be(t)}\right| \leq 1/\al$. Otherwise we have $|t|< \la_0/2$ so using \eqref{dizuit} for the numerator leads to $\be(t) \geq  {g_1(0)}/{2\phi(0)}-1=B_0 $ and for the denominator $|1+\al\be(t)|=1+\al\be(t) \geq 1+\be(t) \geq 1+B_0$.
\end{proof}

\subsection{In-probability bounds}

\begin{lem} \label{bernsteinssl}
We take $\al=\al_1$ and $\ze=\ze_1$ as defined by \eqref{al1ssl}.
There exists  $C>0$ such that 
\[ \sup_{\te \in \ell_0(s_n)} P_{\te}(\hat{\ze}<\ze) \leq \exp(-C(\log n)^2).\]
\end{lem}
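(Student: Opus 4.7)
The plan is to parallel the proof of Lemma \ref{lemzetaunder}, substituting the SSL-specific moment estimates from Propositions \ref{l1}--\ref{l2}. First, note that $\ell_n(\al;X)=\sum_{i=1}^n\log((1-\al)g_0(X_i)+\al g_1(X_i))$ is a sum of concave functions of $\al$, so the score $S(\al)=\sum_i\be(X_i,\al)$ is nonincreasing on the admissible interval $[\cC\log n/n,1]$. Since $\be$ is strictly increasing on $J_n\ni\ze_1$ (Proposition \ref{pr1}), the map $\al\mapsto\ze(\al)$ is strictly decreasing near $\al_1$, and hence $\{\hat\ze<\ze_1\}=\{\hat\al>\al_1\}=\{S(\al_1)>0\}$, irrespective of whether the maximiser lies in the interior or at the boundary of $[\cC\log n/n,1]$. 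It thus suffices to bound $P_\te[S(\al_1)>0]$ uniformly in $\te\in\ell_0[s_n]$.

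Next, I would apply Bernstein's inequality to $\sum_i W_i$ with $W_i=\be(X_i,\al_1)-m_1(\te_{0,i},\al_1)$, which are centred and independent. Writing $S_0=\{i:\te_{0,i}\ne 0\}$ and $A=-\sum_i m_1(\te_{0,i},\al_1)$, for $i\notin S_0$ one has $m_1(0,\al_1)=-\tilde m(\al_1)$, while for $i\in S_0$ Proposition \ref{l2}(2) with $k=1$ gives $m_1(\te_{0,i},\al_1)\leqa 1/\al_1$ as soon as $\al_1$ is small enough, which is automatic since $\al_1\tilde m(\al_1)=ds_n/n=o(1)$ under \eqref{techsn}. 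Using the identity $s_n/\al_1=n\tilde m(\al_1)/d$ from \eqref{al1ssl}, this yields $A\ge(n-s_n)\tilde m(\al_1)-Cn\tilde m(\al_1)/d\geqa n\tilde m(\al_1)$ for a suitably chosen constant $d$ (together with $s_n/n$ small). Proposition \ref{l2}(2) with $k=1$ also gives the envelope $|W_i|\le M\leqa 1/\al_1$.

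For the variance proxy, $V:=\sum_i\mathrm{Var}(W_i)\le\sum_{i\notin S_0}m_2(0,\al_1)+\sum_{i\in S_0}m_2(\te_{0,i},\al_1)$. Proposition \ref{l2}(1) gives $(n-s_n)m_2(0,\al_1)\leqa n\tilde m(\al_1)/(\al_1\ze_1^\kappa)$, while Proposition \ref{l2}(2) with $k=2$ gives $s_n m_2(\te_{0,i},\al_1)\leqa s_n/\al_1^2\leqa n\tilde m(\al_1)/\al_1$ via the previous identity; combining, and using that $\ze_1^{-\kappa}$ is bounded, $V\leqa n\tilde m(\al_1)/\al_1$. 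Putting these estimates together, $(V+MA/3)/A^2\leqa 1/(n\al_1\tilde m(\al_1))=1/(ds_n)$, and Bernstein's inequality yields $P_\te[S(\al_1)>0]\le\exp(-Cn\al_1\tilde m(\al_1))=\exp(-C'ds_n)$. By the lower bound $s_n\ge c_1(\log n)^2$ in \eqref{techsn}, this is dominated by $\exp(-C''(\log n)^2)$, uniformly in $\te\in\ell_0[s_n]$, as required.

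The main obstacle is verifying that the SSL moment bounds apply uniformly for arbitrary non-zero signal values $\te_{0,i}\in\RR$, unlike the SAS analog where one had to distinguish the small-signal regime $|\mu|\le 1/\ze$ from the large-signal regime. Here Proposition \ref{l2}(2) conveniently furnishes bounds on $m_1$ and $m_2$ valid for \emph{every} real $\mu$ (once $\al_1$ is sufficiently small), and the requisite smallness of $\al_1$ together with its location inside $[\cC\log n/n,1]$ is ensured by the construction \eqref{al1ssl} combined with \eqref{techsn} and the bounds on $\tilde m$ from Proposition \ref{l1}.
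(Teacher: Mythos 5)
Your proof is correct and takes essentially the same approach as the paper: reduce $\{\hat\ze<\ze_1\}$ to $\{S(\al_1)>0\}$ using the monotonicity of $\be$ on $J_n$ together with the monotonicity of the score, then apply Bernstein's inequality with the envelope and moment bounds from Propositions \ref{l1}--\ref{l2}, exactly paralleling Lemma \ref{lemzetaunder}. The only cosmetic difference is that you justify $\{\hat\al>\al_1\}\subset\{S(\al_1)>0\}$ by observing that $\ell_n$ is a sum of concave functions of $\al$ (so $S$ is nonincreasing), whereas the paper phrases the same monotonicity fact as a short contradiction argument; both yield the same reduction.
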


\begin{proof}
First note that, almost surely, $\hat{\al}^{-1}\geq 1>\be(2\la_1)$ with the help of the first point of Lemma \ref{pr2}, so $\hat{\ze}=\be^{-1}(\hat{\al}^{-1})>2\la_1$. Since $\be$ is increasing on $(2\la_1,\sqrt{2\log n})$ and $\ze \leq \sqrt{2\log n}$, we have $\{\hat{\ze}<\ze\}=\{\hat{\alpha}>\alpha\}$, so $P(\hat{\ze}<\ze)=P(\hat{\alpha}>\alpha)=P(\hat{\alpha}>\alpha \cap S(\alpha)>0)+P(\hat{\alpha}>\alpha \cap S(\alpha)\leq 0)$.

Let us now focus on the event $\{\hat{\alpha}>\alpha \} \cap \{S(\alpha)\leq 0\}$. If $S(\alpha) \leq 0$, since $S$ is decreasing, $S < 0$ on $]\alpha, \hat{\alpha}]$. So the likelihood $l$ is decreasing on $]\alpha, \hat{\alpha}[$. It implies that there exists $\alpha' \in ]\alpha,\hat{\alpha}[$ such that $l(\alpha')>l(\hat{\alpha})$. But this contradicts the maximality of $\hat{\alpha}$. Therefore $\{\hat{\alpha}>\alpha \} \cap \{S(\alpha)\leq 0\}= \emptyset$.
Hence $P(\hat{\ze}<\ze)=P(\hat{\alpha}>\alpha \cap S(\alpha)>0) \leq P(S(\alpha) > 0)$.

The score function $S(\alpha)=\sum_{i=1}^n \be(\te_i + Z_i, \alpha)$ is a sum of independent random variables, each bounded by $\alpha^{-1}$. We have $P(S(\alpha)>0)=P(\sum_{i=1}^n W_i > A)$, with $A= - \sum_{i=1}^n m_1(\te_i, \alpha)$ and $W_i = \be(\te_i + Z_i, \alpha) - m_1(\te_i,\alpha)$ centered variables, bounded by $M=(1+c)/\alpha$ using the second point of Proposition \ref{l2}. Setting $V= \sum_{i=1}^n var(W_i)$, Bernstein's inequality gives 
$$\di P(S(\alpha)>0) \leq \exp(\frac{-A^2}{2(V+\frac{MA}{3})}).$$
Moreover, proceeding as in Lemma \ref{lemzetaunder} in the SAS case, we have $-A \lesssim -n \tilde{m}(\alpha)$ and $V \lesssim n\frac{\tilde{m}(\alpha)}{\alpha }$, so $\left(\frac{A^2}{2(V+\frac{MA}{3})}\right)^{-1}=\frac{V}{A^2}+\frac{M}{3A}\leq \frac{C}{\alpha n \tilde{m}(\alpha)}+\frac{C'}{\alpha n \tilde{m}(\alpha)} \lesssim (\alpha n \tilde{m}(\alpha))^{-1}$ therefore $\frac{A^2}{2(V+\frac{MA}{3})}\gtrsim \alpha n \tilde{m}(\alpha) \gtrsim {s_n}\gtrsim (\log n)^2$ and finally 
$$\di P(S(\alpha)>0) \leq \exp(-C (\log n)^2).$$
\end{proof}

\bibliographystyle{abbrv}

\bibliography{bibebb}

\end{document}